\documentclass[11pt,reqno]{amsart}

\usepackage[margin=1in]{geometry}
\usepackage[T1]{fontenc}
\usepackage[utf8]{inputenc}
\usepackage{lmodern}
\usepackage{microtype}
\usepackage{amsmath,amssymb,mathtools,amsthm,xcolor,stmaryrd}
\usepackage{booktabs,array,tabularx,longtable}
\usepackage{float}
\usepackage{enumitem}
\usepackage[
  backend=biber,
  style=numeric,
  sorting=nyt,
  giveninits=true,
  doi=false,
  url=false,
  maxbibnames=99,
  backref=true
]{biblatex}
\usepackage[colorlinks=true,linkcolor=blue,citecolor=blue,urlcolor=blue]{hyperref}
\hypersetup{
  pdftitle={Irrational families of Fano weighted complete intersection fourfolds},
  pdfauthor={Jiaji Cai, Andrew Harder, Sukjoo Lee}
}
\numberwithin{equation}{section}

\theoremstyle{plain}
\newtheorem{thm}{Theorem}[section]
\newtheorem{lem}[thm]{Lemma}
\newtheorem{prop}[thm]{Proposition}
\newtheorem{cor}[thm]{Corollary}

\theoremstyle{definition}
\newtheorem{definition}[thm]{Definition}

\theoremstyle{remark}
\newtheorem{rem}[thm]{Remark}
\newtheorem{question}[thm]{Question}

\newcommand{\PP}{\mathbb P}
\newcommand{\CC}{\mathbb C}
\newcommand{\OO}{\mathcal O}
\newcommand{\ev}{\mathrm{ev}}
\newcommand{\reg}{\mathrm{reg}}
\newcommand{\pr}{\mathrm{pr}}
\newcommand{\eff}{\mathrm{eff}}
\newcommand{\sm}{\mathrm{sm}}
\newcommand{\amb}{\mathrm{amb}}
\newcommand{\inv}{\mathrm{inv}}
\newcommand{\rad}{\mathrm{rad}}
\newcommand{\llaur}{(\!(}
\newcommand{\rlaur}{)\!)}

\DeclareMathOperator{\Spf}{Spf}
\DeclareMathOperator{\rk}{rk}
\DeclareMathOperator{\diag}{diag}
\DeclareMathOperator{\id}{id}
\DeclareMathOperator{\Exp}{Exp}
\DeclareMathOperator{\QDM}{QDM}
\DeclareMathOperator{\End}{End}
\DeclareMathOperator{\Frac}{Frac}
\DeclareMathOperator{\Ker}{ker}
\DeclareMathOperator{\im}{im}
\DeclareMathOperator{\Hom}{Hom}
\DeclareMathOperator{\Mat}{Mat}
\DeclareMathOperator{\Span}{Span}
\DeclareMathOperator{\grad}{grad}
\DeclareMathOperator{\Sym}{Sym}
\DeclareMathOperator{\tr}{tr}
\DeclareMathOperator{\vdim}{vdim}

\allowdisplaybreaks
\begin{document}

\title[Irrational families of smooth Fano weighted complete intersection fourfolds]
{Irrational families of Fano weighted complete intersection fourfolds}

\author{Jiaji Cai, Andrew Harder, Sukjoo Lee}

\subjclass[2020]{Primary 14E08; Secondary 14N35, 14J35, 14J45, 53D45}
\keywords{Big quantum cohomology, formal $F$-manifolds, weighted complete
intersections, irrationality}

\begin{abstract}
We study the irrationality of smooth Fano complete intersection fourfolds in weighted projective spaces, using an atom theoretic approach based solely on Gromov--Witten invariants. 
We prove symplectic irrationality, and hence irrationality for every smooth member, in a wide range of families for which irrationality was previously known only for very general members.

\end{abstract}

\maketitle
\tableofcontents

\section{Introduction}
\label{sec:introduction}

\subsection{Main results}

Rationality is a central problem in birational geometry.
Recently, Katzarkov--Kontsevich--Pantev--Yu introduced the theory of \emph{Hodge atoms}, 
which combines Gromov--Witten theory and Hodge theory to construct birational invariants \cite{KKPY}. 
One of its main applications is the irrationality of a very general cubic fourfold \cite[Theorem~6.8]{KKPY}, 
a long-standing conjecture in birational geometry.

These ideas have led to several applications to fourfolds.
Donagi--Pantev use enhanced Hodge atoms and numerical $K$-theory to reprove the irrationality of a very general intersection of three quadrics in $\PP^7$ \cite{DonagiPantevThreeQuadrics}.
Benedetti--Manivel--Perrin compute the small quantum cohomology of Gushel--Mukai fourfolds and prove  irrationality for very general members \cite{BenedettiManivelPerrinGM}.
Benedetti--Fay--Gu\'er\'e--Manivel--Perrin give a criterion based on monodromy and cohomological inequalities, without explicit quantum computations, and apply it to K\"uchle fourfolds of type $(c5)$ \cite{BenedettiEtAlAtomicCriterion}.
More recently, Alexeev--Cavenaghi--Galindo--Georgiev--Katzarkov--Muniz Martins developed localization methods for complete intersections in flag varieties and applied small quantum spectra to Hodge-theoretic rationality obstructions \cite{AlexeevEtAlFlagAtoms}.
See also
\cite{BenedettiEtAlVerra,CavenaghiKatzarkovKontsevichAtomsSymbols,
FayEquivariantVerra,FayHasseObstructions,
BohningBothmerSuaNaiveAtoms,GyengeProductQDM,GyengeSzaboBlowups}
for related developments.

In this paper, we study the irrationality of fourfolds using formal atoms without Hodge-theoretic data, continuing along the lines of \cite{CaiQuarticSymplecticIrrationality,CaiCubicSymplecticIrrationality}.
Although this loses the Hodge-theoretic obstructions available in the theory of Hodge atoms, 
through additional hard work on the Gromov--Witten side, we obtain irrationality results for \emph{all smooth members} of the families under consideration, 
rather than only for general members. 
Such all-member statements are less accessible by existing methods in birational geometry.

In fact, we prove irrationality in the symplectic category.
A symplectic manifold is \emph{symplectically rational} if it is related to projective space by symplectic blow-ups, blow-downs, and deformations.
Otherwise, it is \emph{symplectically irrational}.
For a smooth projective variety with symplectic structure induced by an ample polarization, algebraic rationality implies symplectic rationality by weak factorization \cite{AKMWWeakFactorization}. 

Our main result concerns the 22 families of smooth well-formed Fano complete intersection fourfolds in weighted projective spaces
\cite[Table~1]{PrzyjalkowskiShramovBounds}.
Among these families, $X_2\subset\PP^5\text{ and }X_{2,2}\subset\PP^6$ are always rational, and $X_3\subset\PP^5 \text{ and } X_{2,2,2}\subset\PP^7$ have smooth rational members
\cite{BolognesiRussoStaglianoRationalCubics,
HassettPirutkaTschinkelThreeQuadrics}. Hence they are symplectically rational by definition.

\begin{thm}
  \label{ci:thm:main}
  Every smooth member of the remaining 18 families is irrational. Among these 18 families, all except $X_{2,6}\subset\PP(1^6,3)$ are symplectically irrational.
\end{thm}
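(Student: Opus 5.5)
The plan follows the atom-theoretic criterion of \cite{KKPY} as used in \cite{CaiQuarticSymplecticIrrationality,CaiCubicSymplecticIrrationality}, relying only on Gromov--Witten data. Under symplectic blow-ups, blow-downs and deformations, the big quantum cohomology of a fourfold, viewed as a formal $F$-manifold, decomposes into atoms. Blowing up a point contributes only atoms of rank one. Blowing up a surface contributes copies of the quantum cohomology of that surface, and blowing up a curve contributes atoms of rank at most $2g(C)$ coming from $H^*(C)$ with its trivial quantum deformation. Symplectic rationality of $X$ would therefore force every atom of $X$ to be "of low type", i.e.\ of a shape realizable by blow-up centers of dimension $\le 2$. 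It suffices to find one atom of $X$ that violates this. Concretely, we will exhibit a summand of the quantum cohomology, namely a generalized eigenspace of quantum multiplication by $c_1$ (or by a generic class) at a suitable point, whose rank or Frobenius structure is incompatible with every allowed center. The condition is typically that the rank exceeds $h^{1,1}+2+2\cdot(\text{genus bound})$, or that the summand carries a nontrivial local system of monodromy that curves and surfaces cannot produce.

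The first step is to compute the ingredients for each of the 18 families from \cite[Table~1]{PrzyjalkowskiShramovBounds}. These are: the primitive middle cohomology (via the weighted Griffiths residue and Jacobian ring, which gives $b_4^{\mathrm{prim}}$ and the Hodge numbers), the Fano index, and the small $J$-function from the quantum Lefschetz theorem for weighted complete intersections. From the $J$-function we read off the small quantum multiplication by $H$ on the ambient part. The nonzero eigenvalues of $c_1\star$ lie on the ambient subring, while the primitive part $H^4_{\mathrm{prim}}$ sits entirely in the $0$-eigenspace. This follows because, by deformation invariance and monodromy (the full symplectic/orthogonal monodromy group of the universal family acts irreducibly on the vanishing cohomology), quantum products of primitive classes with ambient classes factor through the pairing. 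So the $0$-eigenspace is $H^4_{\mathrm{prim}}$ plus a small ambient piece. Since the monodromy-invariant Frobenius structure has no room for splitting off curve- or surface-type summands of that size, we conclude that this atom is too large or non-splittable.

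The main obstacle is passing from small to big quantum cohomology. Atoms are defined via the big quantum product, and the spectral decomposition must be controlled at a general point of $H^*(X)$, not merely along $H^2$. The plan is to use the monodromy-invariance argument together with the reconstruction-type structure of weighted complete intersections. This reduces the analysis to showing that the monodromy-invariant $0$-eigen-summand stays indecomposable under deformation, by checking that the rank of the relevant summand is larger than what any blow-up center could supply. This check is a numerical case-by-case verification of $b_4^{\mathrm{prim}}$ against the dimension bounds. For the families where this numerical comparison fails, I would compute additional $4$-point invariants of primitive classes to detect nontrivial structure.

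The exceptional family $X_{2,6}\subset\PP(1^6,3)$ is where I expect the symplectic argument to break down: its spectrum presumably admits a decomposition compatible with a genus-$g$ curve or K3-type surface center. For this family, irrationality of every smooth member would instead be obtained algebraically, through its structure as a double cover of a quadric fourfold branched in a sextic. I would try a degeneration to positive characteristic or an explicit unirational/birational analysis. Alternatively, the atom criterion can be run within the algebraic category, where the allowed blow-up centers are restricted further, using Hodge types rather than symplectic constraints. The algebraic statement for the other 17 families then follows from symplectic irrationality via weak factorization \cite{AKMWWeakFactorization}.
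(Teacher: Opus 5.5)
\textbf{There is a genuine gap.} You want to show that the atom containing $H^4_{\pr}$ is too large for any blow-up center, by comparing $N=\dim H^4_{\pr}$ with dimension bounds. This cannot work, because no such bound exists. In the even-cohomology quantum $D$-module, a curve of any genus contributes atoms of rank at most $2$, so there is no $2g$ term and no genus bound. A surface with $\kappa=1$ or $2$, however, contributes one atom of rank $b_2(S_0)+2$, where $S_0$ is its minimal model, and $b_2$ is unbounded on such surfaces. By size alone, any atom can be realized by a surface of general type.

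What is missing is a finer invariant. The paper proves that the monodromy has Zariski closure $O(V)$. Irreducibility alone is not enough: $X_{2,2}$ has irreducible but finite monodromy. For the weighted families this needs a reflection-group lemma and an analysis of the discriminant components through quotient points. It follows that $\Phi=\Phi(t,s)$ with $s=\tfrac12(u,u)_X$. Then $\Lambda=\Phi_s$ is an eigenvalue of $C_E$ whose twisted atom contains the trivial summand $\mathbf 1^{\oplus N}$, spanned by the horizontal sections $v-\frac{(u,v)_X}{z}\grad\Lambda$. Small quantum cohomology bounds its multiplicity by $N+5-\iota$. Surface atoms with $\kappa\ge1$ have at most $m-3$ trivial summands, and those with $\kappa\le0$ have rank at most $24<N$, so $m\ge N+3$. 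This settles index $3$ only.
\begin{itemize}
\item In index $2$ you must further identify the atom with a $\kappa=2$ surface atom, whose exponents are all $0$. You then compare with the non-integral small exponents ($\tfrac13,\tfrac23$ or $\tfrac12$), using the comparison of regular factors for a non-split, unramified eigenvalue.
\item In index $1$ the Jordan constraint forces the small eigenvalue $-\alpha Q$ (not $0$), of multiplicity $N+4$, to split as $\Lambda,\Lambda'$ with multiplicities $N+3$ and $1$. This splitting must be excluded. The paper does so via the identity $\Delta^3=b\,(\grad\Delta,\grad\Delta)_X$, with $b$ a unit and $\Delta=\Lambda'-\Lambda$; comparing $s$-orders forces $\Delta=0$.
\end{itemize}

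Your small-to-big step aims at the wrong target. The danger is that eigenvalues split and Jordan types change under bulk deformation, and rank counts or a few extra $4$-point invariants give no control over that. The paper handles the ambient directions by Euler-power flows, which preserve multiplicities and Jordan partitions. It handles the primitive directions by $O(V)$-equivariance and restriction to the radial locus $u=r\phi$.

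Finally, for $X_{2,6}\subset\PP(1^6,3)$, degeneration to positive characteristic, or any specialization argument, only proves irrationality of very general members. The theorem claims every smooth member, and this requires Pukhlikov's birational rigidity of smooth double quadrics. Your diagnosis there is also off. The symplectic argument does not fail because the spectrum mimics a curve or K3 center (here $N=1303$). It fails because the pull-back of the quadric's primitive class spans a monodromy-invariant line $L\subset V$. The monodromy closure therefore lies in $O(L)\times O(L^\perp)$, and the one-variable reduction is unavailable.
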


See Table~\ref{ci:tab:irrationality} for details.
Very general (stable) irrationality was previously known for some of the
families covered by Theorem~\ref{ci:thm:main};
see \cite{TotaroHypersurfaces,ChatzistamatiouLevineTorsion,
NicaiseOttemTropical,SkauliQuadricCubic} for projective complete intersections, and
\cite{OkadaWeightedHypersurfaces,HassettPirutkaTschinkelDoubleFourfold}
for weighted hypersurfaces.
There are also earlier all-member results in special cases;
for example, every smooth quintic fourfold is birationally superrigid
\cite{DeFernexRigidHypersurfaces,DeFernexRigidHypersurfacesErratum}.

\begin{rem}
    To be precise, we only prove that the 17 families excluding $X_{2,6}\subset\PP(1^6,3)$ are all irrational; the exceptional family $X_{2,6}\subset\PP(1^6,3)$ was already known to be irrational by the work of Pukhlikov \cite{PukhlikovDoubleQuadric}. See Remark \ref{ci:rem:weighted}.
\end{rem}

\begin{rem}
    The approach can be applied to families of threefolds beyond weighted complete intersections, and will be pursued in forthcoming work.
\end{rem} 

For a smooth well-formed Fano weighted complete intersection
$X=X_{d_1,\ldots,d_c}\subset\PP(a_0,\ldots,a_{m+c})$, which is not an intersection with a linear cone, 
consider the numerical quantity
\[
  \sigma(X):=\dim X-\frac{2\iota}{d_{\max}},
\]
where $\iota=\sum_i a_i-\sum_j d_j$ is the Fano index and $d_{\max}=\max_j d_j$.
For complete intersections in ordinary projective space, this equals
the upper Serre dimension of the residual category in
$D^b\mathrm{Coh}(X)$;
see \cite[Theorem~1.7]{KuznetsovPerrySerre}.
See also \cite{CarazolliEtAlQuantumRationality}
for a related quantum-connection perspective.
\begin{question}[Katzarkov--Kontsevich]
    Does the numerical condition
    \[\sigma(X)>\dim X-2\quad \textrm{ or equivalently }\quad \iota<d_{\max}\]
    imply that very general members are irrational? Does it imply that all smooth members are irrational?
\end{question} 
Theorem \ref{ci:thm:main} fully confirms this speculation in dimension 4:

\begin{cor}
\label{ci:cor:spectral-dimension}
Among the 22 families of smooth well-formed Fano weighted complete intersection fourfolds:
\begin{itemize}
    \item if $\sigma(X)>2$, then all smooth members are irrational;
    \item if $\sigma(X)=2$, then some members are rational and some are irrational;
    \item if $\sigma(X)=0,1$, then all smooth members are rational.
\end{itemize}
\end{cor}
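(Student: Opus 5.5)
The corollary should follow from Theorem~\ref{ci:thm:main} together with a case check over the 22 families in \cite[Table~1]{PrzyjalkowskiShramovBounds}. The key observation is that the trichotomy in $\sigma(X)$ picks out exactly the families singled out in the introduction.

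Since $\dim X=4$, we have $\sigma(X)=4-2\iota/d_{\max}$. Hence $\sigma(X)>2$ is equivalent to $\iota<d_{\max}$, and $\sigma(X)=2$ is equivalent to $\iota=d_{\max}$. The cases $\sigma(X)=0,1$ correspond to $\iota=2d_{\max}$ and $2\iota=3d_{\max}$.

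I would first tabulate $(\iota,d_{\max})$ for each of the 22 families and sort them by $\sigma$. For the four families singled out in the introduction:
\begin{itemize}
\item $X_2\subset\PP^5$ has $\iota=4$, $d_{\max}=2$, so $\sigma=0$.
\item $X_{2,2}\subset\PP^6$ has $\iota=3$, $d_{\max}=2$, so $\sigma=1$.
\item $X_3\subset\PP^5$ has $\iota=3=d_{\max}$, so $\sigma=2$.
\item $X_{2,2,2}\subset\PP^7$ has $\iota=2=d_{\max}$, so $\sigma=2$.
\end{itemize}
The remaining step is to verify, row by row, that every one of the other 18 families satisfies $\iota<d_{\max}$. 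Examples:
\begin{itemize}
\item Quartic: $\iota=2<4$.
\item $X_{2,3}\subset\PP^6$: $\iota=2<3$.
\item $X_{2,6}\subset\PP(1^6,3)$: $\iota=9-8=1<6$.
\end{itemize}
The weighted cases are handled the same way. In particular, I must confirm that no other family attains $\sigma\le 2$. Fano index is at most $5$, and the only families with $\iota\ge d_{\max}$ in dimension 4 are quadrics, intersections of two quadrics, cubics, intersections of three quadrics, and linear cones (which are excluded). This gives the classification I expect, but I would check it against the table explicitly.

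Given this partition, the three bullets follow.
\begin{itemize}
\item For $\sigma>2$, the families are exactly the 18 of Theorem~\ref{ci:thm:main}, all of whose smooth members are irrational.
\item For $\sigma=2$, the families are $X_3$ and $X_{2,2,2}$. They have smooth rational members by \cite{BolognesiRussoStaglianoRationalCubics,HassettPirutkaTschinkelThreeQuadrics}, and very general members are irrational by \cite[Theorem~6.8]{KKPY} for cubics and by \cite{DonagiPantevThreeQuadrics} (or Hassett--Pirutka--Tschinkel) for three quadrics.
\item For $\sigma\in\{0,1\}$, the quadric is rational by projection from a point. For $X_{2,2}$, projecting from a line or a point gives rationality of every smooth member.
\end{itemize}

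The only real obstacle is the bookkeeping: making sure the list of 22 families and their indices is read correctly, especially the weighted ones, so that $\iota<d_{\max}$ holds for each of the 18. Everything else is citation.
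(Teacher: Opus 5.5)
Your proposal is correct and is essentially the paper's argument: the $\sigma$ column of Table~\ref{ci:tab:irrationality} shows that $\sigma>2$ holds for exactly the 18 families of Theorem~\ref{ci:thm:main}, that $\sigma=2$ picks out $X_3$ and $X_{2,2,2}$, and that $\sigma\in\{0,1\}$ picks out $X_2$ and $X_{2,2}$; the three bullets then follow from the theorem and the cited results (\cite{BolognesiRussoStaglianoRationalCubics,HassettPirutkaTschinkelThreeQuadrics,KKPY} and \cite[Table~2]{PrzyjalkowskiShramovBounds}). One small correction: for $X_{2,2}\subset\PP^6$ it is projection from a line contained in $X$ that maps $X$ birationally onto $\PP^4$, whereas projection from a point only maps $X$ birationally onto a singular cubic fourfold, which needs a further argument, so drop ``or a point''.
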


\subsection{Summary}

We briefly describe the construction of formal atoms and the strategy
of the proof of Theorem~\ref{ci:thm:main}.
Set
\[
  H_X=H^{\ev}(X,\CC),\qquad
  k_X=\overline{\Frac(R_X)},\qquad
  V_X=H_X\otimes_\CC k_X,
\]
where $R_X$ is the coefficient ring for the big quantum product.
The $z$-component of the quantum connection is
\[
  \nabla_z
  =
  \partial_z-\frac{C_E}{z^2}+\frac{\mu_X}{z},
\]
where $C_E$ denotes quantum multiplication by the Euler element.
After extending scalars to $k_X((z))$,
Proposition~\ref{prop:euler-spectral-block} gives an orthogonal
formal decomposition
\[
  (V_X((z)),\nabla,P_X)
  \simeq
  \bigoplus_\Lambda^\perp
  (\mathcal M_\Lambda,\nabla_\Lambda,P_X|_{\mathcal M_\Lambda}).
\]
Each summand $\mathcal M_\Lambda$ is called the \emph{formal atom}
$\mathrm{At}(X)_\Lambda$ (or simply an atom).

To each formal atom we associate its multiplicity $m_\Lambda$,
Jordan partition $\mathcal J_\Lambda$ of $C_E|_{V_\Lambda}$, and
formal type of its $z$-connection.
By the Hukuhara--Levelt--Turrittin (HLT) theorem, after finite ramification
the latter is described by exponential factors
$\varphi_{\Lambda,a}$ and regular-singular factors $R_{\Lambda,a}$.
We refer to the data
\[
  \left(
    m_\Lambda,\mathcal J_\Lambda,
    \{(\varphi_{\Lambda,a},[R_{\Lambda,a}])\}_a
  \right)
\] as \emph{formal atomic invariants}.
For an atom of unramified exponential type,
\[
  \mathcal M_\Lambda
  \simeq
  \mathcal E^{-\Lambda/z}\otimes R_\Lambda,
\]
where $  R_\Lambda$ is regular singular and 
\[
  \mathcal E^{-\Lambda/z}
  =
  \left(k_X((z)),\partial_z-\frac{\Lambda}{z^2}\right)
\]
is the rank-one exponential connection with horizontal solution
$e^{-\Lambda/z}$. In this case, we also use the coarser triple
\[
  (m_\Lambda,\mathcal J_\Lambda,\Exp_\Lambda),
\]
where $\Exp_\Lambda=\Exp(R_\Lambda)\subset\CC/\mathbb Z$. 
When studying irrationality, we can argue by contradiction and assume the variety in question is rational, and then unramifiedness comes for free (at least in lower dimensions).

The basic source of irrationality obstructions is Iritani's quantum
blow-up formula
\cite[Theorems~1.1 and~5.18]{IritaniBlowups}
and its symplectic extension
\cite[Theorem~3.5]{CaiQuarticSymplecticIrrationality}.
Let $\widetilde X$ be the blow-up of $X$ along a smooth center
$Z$ of complex codimension $r\geq2$.
In terms of formal atoms, this says schematically that
\[
  \mathrm{At}(\widetilde X)
  \simeq
  \mathrm{At}(X)
  \sqcup
  \bigsqcup_{j=0}^{r-2}\mathrm{At}(Z).
\]

Consequently, if a fourfold is symplectically rational, its formal
atoms must be built from those of $\PP^4$ and of blow-up centers of
complex dimension at most two, namely points, curves, and
surfaces (symplectic manifolds of complex dimension 2). 
Their formal atoms are described in
Section~\ref{sec:obstruction}; see also
\cite{GyengeSzaboBlowups} for related surface calculations.
Thus, to obstruct rationality, it is enough to find a formal atom
whose atomic invariants cannot occur among these lower-dimensional
contributions.

The main difficulty is that the big quantum connection is usually
hard to compute directly. By contrast, the small quantum connection
of a weighted complete intersection is accessible through the mirror
theorem \cite{PrzyjalkowskiWeightedQH}.
We therefore begin with the small or other restricted quantum
connections and study which parts of their atomic data are preserved
under bulk deformation.

The cohomology of a smooth weighted complete intersection decomposes into an ambient part, obtained by restriction from the ambient space, and a primitive part in the middle degree.
When deforming in the ambient directions, atomic invariants are preserved by the flow of the quantum powers of the Euler vector field; 
see \cite[Lemmas~5.4--5.5]{CaiQuarticSymplecticIrrationality} and Theorem~\ref{thm:formal-subscheme-persistence}.
For the primitive directions, the main idea is to use monodromy reduction \cite{TianGangQH,CollinoJinzenjiHypersfQH,HuBigQHCI} to control the splitting of atoms.

For the 17 families for which Theorem \ref{ci:thm:main} asserts symplectic irrationality, 
the Zariski closure of the monodromy representation is the orthogonal representation on primitive cohomology;
see Proposition~\ref{ci:prop:weighted-monodromy}. 
Thus the dependence of the Gromov--Witten potential on the primitive variables reduces to a single variable.
This reduction produces a primitive part of the formal atom
corresponding to a distinguished eigenvalue $\Lambda$: 
\[
  \mathcal M_\Lambda
  \simeq
  \mathcal E^{-\Lambda/z}\otimes
  \bigl(\mathbf1^{\oplus N}\oplus\mathcal R_\Lambda\bigr),
\]
where $\mathbf1^{\oplus N}$ is the trivial differential module of rank $N=\dim H^4_{\pr}(X)$; see Proposition~\ref{ci:prop:atom-constraints}.
The presence of the rank-$N$ trivial summand severely restricts the possible surface atoms occurring in a hypothetical factorization sequence.

The remaining argument depends on the Fano index of the variety in question.
In index three, the size and Jordan type of the $\Lambda$-atom give a contradiction.
In index two, the $\Lambda$-atom does not split, and then Proposition~\ref{lem:non-split-unramified} allows us to compare its regular exponents with those computed on the small locus. 
The resulting non-integral exponents give a contradiction. 
In index one, the size and Jordan type of the $\Lambda$-atom force a specific type of splitting, which we exclude using Frobenius identities.

As a separate point of interest, in the index-two case we determine
the multiplicities and Jordan partitions of $C_E$ directly (without arguing by contradiction), and obtain the polynomial relation
\eqref{ci:eq:big-index-two-relation}; see
Proposition~\ref{ci:prop:exact-index-two}.

The paper is organized as follows.
In Section~\ref{sec:spectral-data}, we introduce formal atoms, and study how atomic invariants change under specialization and deformation.
In Section~\ref{sec:obstruction}, we study the formal atoms of possible low-dimensional blow-up centers.
In Section~\ref{sec:ci-fourfolds}, we turn to our weighted complete
intersections, and prove Theorem~\ref{ci:thm:main}.

\section*{Acknowledgements}
\noindent 
J.C. would like to thank Mark McLean, Juan Mu\~noz-Ech\'aniz, Paul Seidel, Zihong Chen, Jae Hee Lee for helpful conversations.
A.H. was supported by the Simons Foundation's Travel Support for Mathematicians program during the completion of this work.
S.L. was supported by the Institute for Basic Science (IBS-R003-D1). 

\medskip
\noindent\textbf{AI disclosure.}
ChatGPT was used in the preparation of this manuscript for drafting,
language editing, and consistency checks.
The authors take full responsibility for the mathematical content
of this paper.

\section{Formal atoms}
\label{sec:spectral-data}

\subsection{Quantum \texorpdfstring{$D$}{D}-modules}

We recall the notion of the big quantum $D$-module. We work throughout with even cohomology
\[
  H_X:=H^{\ev}(X,\CC)=\bigoplus_j H^{2j}(X,\CC).
\]
Let $(X,\omega)$ be a closed symplectic manifold and let
$C^{\eff}(X,\omega)\subset H_2(X,\mathbb Z)/\mathrm{tors}$ be the
monoid generated by curve classes occurring in genus-zero
Gromov--Witten theory.  For $d\in C^{\eff}(X,\omega)$ write $Q^d$ for
the corresponding Novikov monomial.  Let $\CC\llbracket Q\rrbracket $ be the completion
with respect to symplectic energy: a formal sum $\sum_d a_dQ^d$ is
allowed when, for every $c\in\mathbb R$, only finitely many terms with
$\omega\cdot d<c$ have nonzero coefficient.  Multiplication is the
continuous Cauchy extension of
$Q^{d_1}Q^{d_2}=Q^{d_1+d_2}$. The grading of $\CC\llbracket Q\rrbracket $ is given by
$\deg Q^d=2c_1(X)\cdot d$.
Choose a homogeneous basis
$\phi_0=1,\phi_1,\ldots,\phi_{m-1}$ of $H_X$ and write
$\tau=\sum_i\tau^i\phi_i$, where
$\deg\tau^i=2-\deg\phi_i$.  Set
\[
  R_X=\CC\llbracket Q\rrbracket \llbracket \tau^0,\ldots,\tau^{m-1}\rrbracket .
\]
Thus one first completes in the Novikov variables with respect to energy
and then completes formally in the bulk variables.

\begin{definition}
  The (universal) big quantum product is the $R_X$-bilinear product
  $\star$ on $H_X\otimes R_X$ determined on the basis elements by
  \[
    (\phi_i\star\phi_j,\phi_k)_X
    =
    \sum_{d,n}
    \left\langle
    \phi_i,\phi_j,\phi_k,
    \underbrace{\tau,\ldots,\tau}_{n}
    \right\rangle^X_{0,n+3,d}
    \frac{Q^d}{n!}.
  \]
\end{definition}

Let $\mathcal B_X:=\Spf(R_X)\longrightarrow\Spf(\CC\llbracket Q\rrbracket )$ be the
formal bulk germ over the Novikov base.  Its relative tangent
sheaf is identified with the even cohomology by
\[
  \Theta_{\mathcal B_X/\CC\llbracket Q\rrbracket }
  =
  \bigoplus_iR_X\frac{\partial}{\partial\tau^i}
  \longrightarrow
  H_X\otimes R_X,
  \qquad
  \frac{\partial}{\partial\tau^i}\longmapsto\phi_i.
\]
Transporting the big quantum product across this identification defines
a multiplication $\circ$ on relative vector fields:
\[
  \frac{\partial}{\partial\tau^i}\circ
  \frac{\partial}{\partial\tau^j}
  \longleftrightarrow
  \phi_i\star\phi_j.
\]
The unit is $e=\partial_{\tau^0}$.  The Euler vector field $E$ is the
relative vector field corresponding to the Euler element $E_X$ defined
below.  The WDVV, unit, and grading identities make
$(\mathcal B_X,\circ,e,E)$ a formal $F$-manifold over $\CC\llbracket Q\rrbracket $; see
\cite[Chapter~I, Sections~2 and~5]{ManinFrobenius}.
All tangent spaces, formal subschemes, and formal flows below are
understood relative to the Novikov base.

If $A$ is a complete topological $\CC$-algebra, every continuous
$\CC$-algebra homomorphism $R_X\to A$ gives an $A$-valued specialization
of the universal product.  The small quantum product is the restriction
$\star_{\sm}=\star|_{\tau=0}$ over $\CC\llbracket Q\rrbracket $.

Suppose $\dim_{\CC}X=n$.  The grading operator and Euler element
are
\[
  \mu_X(\phi_i)
  =
  \left(\frac{\deg\phi_i}{2}-\frac n2\right)\phi_i,
  \qquad
  E_X
  =
  c_1(X)+\sum_i\left(1-\frac{\deg\phi_i}{2}\right)\tau^i\phi_i.
\]
We write $C_E=E_X\star(-)$.  The $z$-direction of the quantum
connection is
\begin{equation}
  \nabla_z
  =
  \partial_z-\frac{C_E}{z^2}+\frac{\mu_X}{z}.
  \label{eq:quantum-z-connection}
\end{equation}
For the relative bulk directions, set
$\nabla_{\tau^i}=\partial_{\tau^i}+z^{-1}(\phi_i\star)$.  The full
quantum connection also has logarithmic components along the Novikov
base: for $\xi\in H^2(X)$,
$\nabla_{\xi Q\partial_Q}=\xi Q\partial_Q+z^{-1}(\xi\star)$, where
$(\xi Q\partial_Q)(Q^d)=(\xi\cdot d)Q^d$.

\begin{prop}
  \label{prop:quantum-flatness}
  These operators define a flat meromorphic connection.
\end{prop}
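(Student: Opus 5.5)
We will check flatness by computing the curvature of the connection $\nabla$ on $H_X\otimes R_X((z))$, or more precisely on $H_X\otimes R_X[z^{\pm1}]$, component by component. There are three kinds of commutators: bulk--bulk $[\nabla_{\tau^i},\nabla_{\tau^j}]$, bulk--Novikov $[\nabla_{\tau^i},\nabla_{\xi Q\partial_Q}]$ together with Novikov--Novikov, and the mixed commutators with $\nabla_z$. Throughout we use the standard identities of the genus-zero potential $\Phi(\tau,Q)$: WDVV, the string equation (fundamental class axiom), the divisor equation, and homogeneity (grading).

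\emph{Bulk--bulk.} We expand
\[
[\nabla_{\tau^i},\nabla_{\tau^j}]=z^{-1}\bigl(\partial_{\tau^i}(\phi_j\star)-\partial_{\tau^j}(\phi_i\star)\bigr)+z^{-2}[\phi_i\star,\phi_j\star].
\]
The $z^{-1}$ term vanishes because the structure constants are third derivatives of $\Phi$, so $\partial_{\tau^i}(\phi_j\star)$ is symmetric in $i,j$. The $z^{-2}$ term vanishes by associativity and commutativity of $\star$, i.e.\ by WDVV.

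\emph{Novikov directions.} We use the divisor equation in the form $\xi Q\partial_Q(\phi_j\star)=\partial_{\tau^\xi}(\phi_j\star)$ for $\xi\in H^2$. This holds after absorbing the classical divisor shift: in the universal setting, $\tau$ includes the $H^2$ variables, and the divisor axiom gives $\partial_{\tau^\xi}\langle\cdots\rangle$-series $=(\xi\cdot d)$ times the $d$-coefficient for $d\neq0$, while the $d=0$ cubic term contributes nothing to $\xi Q\partial_Q$. Hence $\xi Q\partial_Q(\phi_j\star)-\partial_{\tau^j}(\xi\star)=0$ by symmetry of fourth derivatives, and the $z^{-2}$ term again vanishes by commutativity and associativity. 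We will check carefully that the $d=0$ classical part is consistent: since $\xi\star$ at $Q$-degree zero depends on $\tau$ only through nothing, it is just cup product.

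\emph{The $z$-direction.} We compute
\[
[\nabla_z,\nabla_{\tau^i}]=z^{-2}\Bigl(-\phi_i\star+[\mu_X,\phi_i\star]-\partial_{\tau^i}C_E\Bigr)+z^{-3}[C_E,\phi_i\star].
\]
The $z^{-3}$ term vanishes by commutativity. For the $z^{-2}$ term, we write $C_E=c_1\star+\sum_k(1-\deg\phi_k/2)\tau^k\phi_k\star$, so that $\partial_{\tau^i}C_E=(1-\deg\phi_i/2)\phi_i\star+\mathcal E(\phi_i\star)$, where $\mathcal E=c_1\text{-part}+\sum_k(1-\deg\phi_k/2)\tau^k\partial_{\tau^k}$ acts on structure constants. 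Here the $c_1$ part enters through the divisor equation as $c_1Q\partial_Q$. The required identity then becomes the homogeneity statement
\[
\mathcal E(\phi_i\star)+[\phi_i\star,\mu_X]=\tfrac{\deg\phi_i}{2}\,\phi_i\star,
\]
i.e.\ that $\phi_i\star$ has degree $\deg\phi_i$ with respect to the grading $\deg\tau^k=2-\deg\phi_k$ and $\deg Q^d=2c_1\cdot d$. This follows from the virtual dimension formula, since $\langle\phi_a,\ldots\rangle_{0,n,d}\neq0$ only if $\sum\deg\phi=2(\dim X+c_1\cdot d+n-3)$. The commutator $[\nabla_z,\nabla_{\xi Q\partial_Q}]$ is the same computation with $\phi_i=\xi$ in degree $2$.

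\emph{Main obstacle.} We expect the main obstacle to be bookkeeping in the $z$-direction: matching the sign convention of $\mu_X$, the shift by $n/2$, and the Euler field so that the homogeneity identity holds on the nose, including the $\tau^0$ and $H^2$ terms, where $1-\deg\phi/2\in\{1,0\}$ and the divisor equation intervenes. We would first verify this identity on the small locus $\tau=0$, where it reduces to $c_1Q\partial_Q$-homogeneity, and then extend it to all $\tau$ by the grading argument. Finally, meromorphicity is clear: poles occur only at $z=0$ (of order two) and along $Q=0$ (logarithmic).
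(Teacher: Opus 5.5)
Your proposal follows the same route as the paper, whose proof simply cites the three ingredients you use: WDVV together with symmetry of third derivatives for the bulk directions, the divisor equation for the Novikov directions, and the grading identity for the $z$-direction. Your bulk and Novikov computations are fine.

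There is, however, a sign slip in the $z$-direction, and the identity you end up asserting is false as written. Since $[A,\partial_{\tau^i}]=-\partial_{\tau^i}A$ for a multiplication operator $A$, one has
\[
[\nabla_z,\nabla_{\tau^i}]=z^{-2}\bigl(-\phi_i\star+[\mu_X,\phi_i\star]+\partial_{\tau^i}C_E\bigr)-z^{-3}[C_E,\phi_i\star].
\]
So flatness is equivalent to $\partial_{\tau^i}C_E=\phi_i\star+[\phi_i\star,\mu_X]$. This is exactly the identity $\partial_\xi U=C_\xi+[C_\xi,\mu]$ that the paper uses later.

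Inserting $\partial_{\tau^i}C_E=(1-\tfrac{\deg\phi_i}{2})\phi_i\star+\mathcal E(\phi_i\star)$, the homogeneity statement you actually need is
\[
\mathcal E(\phi_i\star)+[\mu_X,\phi_i\star]=\tfrac{\deg\phi_i}{2}\,\phi_i\star ,
\]
not $\mathcal E(\phi_i\star)+[\phi_i\star,\mu_X]=\tfrac{\deg\phi_i}{2}\phi_i\star$.

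To see this, write $\phi_i\star\phi_j=\sum_k c_{ij}^k\phi_k$. Your virtual-dimension count shows that $c_{ij}^k$ is homogeneous of degree $\deg\phi_i+\deg\phi_j-\deg\phi_k$. The $\phi_k$-component of $\bigl(\mathcal E(\phi_i\star)\pm[\mu_X,\phi_i\star]\bigr)\phi_j$ is then $c_{ij}^k$ times $\tfrac12(\deg\phi_i+\deg\phi_j-\deg\phi_k)\pm\tfrac12(\deg\phi_k-\deg\phi_j)$. This equals $\tfrac{\deg\phi_i}{2}$ only for the $+$ sign. For instance, $H\star H^n=Q$ on $\PP^n$ at $\tau=0$ violates your version.

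With this correction, your verbal description is exactly what is needed: $\phi_i\star$ is homogeneous of degree $\deg\phi_i$ for $\deg\tau^k=2-\deg\phi_k$ and $\deg Q^d=2c_1\cdot d$. The argument is then complete, and the case $\nabla_{\xi Q\partial_Q}$ does reduce to $\phi_i=\xi$ as you say, because $1-\deg\xi/2=0$. The planned detour through the small locus $\tau=0$ is unnecessary, since the dimension count applies directly at all $\tau$.
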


\begin{proof}
  The commutativity of the bulk connections is the WDVV equation, the
  Novikov components are compatible by the divisor equation, and
  compatibility with \eqref{eq:quantum-z-connection} is the usual grading
  identity for the genus-zero potential; see
  \cite[Sections~2--3]{KontsevichManinGW} and
  \cite[Chapter~II]{ManinFrobenius}.
\end{proof}

We denote the $R_X[z,z^{-1}]$-module
$H_X\otimes_\CC R_X[z,z^{-1}]$ equipped with this connection by
$\QDM(X)$.  
The flat pairing $P_X$ is the Poincar\'e pairing of a section at $-z$ with another at $z$.

\subsection{Formal spectral decomposition}

Set $k_X=\overline{\Frac(R_X)}$
and $V_X=H_X\otimes_\CC k_X$. 
The operator $C_E$ is now a $k_X$-linear endomorphism of
$V_X$.  For an eigenvalue $\Lambda\in k_X$, write the generalized eigenspace as
\[
  V_\Lambda=\Ker\bigl((C_E-\Lambda\id)^m\bigr).
\]
Here $m=\dim H_X$. Since $k_X$ is algebraically closed,
$V_X=\bigoplus_\Lambda V_\Lambda$. 

\begin{definition}
  \label{def:euler-spectral-data}
  For an eigenvalue $\Lambda$ of $C_E$ over $k_X$, define
  \[
    m_\Lambda=\dim_{k_X}V_\Lambda,
    \qquad
    \mathcal J_\Lambda
    =
    \text{the Jordan partition of }C_E|_{V_\Lambda}.
  \]
\end{definition}

For example, $\mathcal J_\Lambda=(3,1)$ means
$C_E|_{V_\Lambda}\sim J_3(\Lambda)\oplus J_1(\Lambda)$ and
$m_\Lambda=4$. 

We extend scalars from the Laurent polynomial ring $R_X[z,z^{-1}]$
to the formal Laurent series field $k_X((z))$. This gives the differential
module with pairing $(V_X((z)),\nabla,P_X)$, whose $z$-component is
\eqref{eq:quantum-z-connection}.
The generalized eigenspace decomposition can be lifted to the full meromorphic connection, compatibly with $P_X$.
\begin{prop}
  \label{prop:euler-spectral-block}
  There is an orthogonal formal decomposition
  \begin{equation}
    (V_X((z)),\nabla,P_X)
    \simeq
    \bigoplus_\Lambda^\perp
    (\mathcal M_\Lambda,\nabla_\Lambda,P_X|_{\mathcal M_\Lambda})
  \end{equation}
  where $\mathcal{M}_\Lambda=G(V_\Lambda((z)))$
  for some gauge $G\in I+z\End_{k_X}(V_X)[[z]]$, the
  transformed connection $\nabla_\Lambda$ preserves every $V_\Lambda((z))$,
  and the restricted pairing
  $P_X|_{\mathcal{M}_\Lambda}$ is nondegenerate and flat.
\end{prop}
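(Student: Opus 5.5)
The plan is the standard formal block-diagonalization (splitting lemma) for a connection with a pole of order two, run over the algebraically closed field $k_X$. After that, I will check separately that the pairing and the bulk directions are compatible with the splitting.

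\emph{Step 1: the $z$-direction.} Write $\nabla_z=\partial_z+z^{-2}A(z)$ with $A(z)=-C_E+z\mu_X$. The leading term $A_0=-C_E$ has the decomposition $V_X=\bigoplus_\Lambda V_\Lambda$, with distinct eigenvalues on distinct blocks. I look for a gauge $G=I+\sum_{k\ge1}G_kz^k$ such that the transformed matrix $B=G^{-1}AG+z^2G^{-1}\partial_zG$ is block diagonal. Comparing coefficients of $z^k$ gives the recursion
\[
[A_0,G_k]=B_k-A_k+(\text{terms in }G_{<k},B_{<k}).
\]
Here $B_k$ is taken to be the block-diagonal part of the right-hand side. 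For the off-diagonal part, note that $\mathrm{ad}(A_0)$ restricted to $\Hom(V_\Lambda,V_{\Lambda'})$ with $\Lambda\ne\Lambda'$ is $X\mapsto A_0|_{V_{\Lambda'}}X-XA_0|_{V_\Lambda}$. This map is invertible because the two blocks have disjoint spectra (Sylvester). So each off-diagonal piece of $G_k$ is solved uniquely, and the diagonal pieces are set to zero. Everything stays inside $k_X$, since only linear algebra over $k_X$ is used. This produces $G\in I+z\End_{k_X}(V_X)[[z]]$ with $\nabla_z$ preserving each $V_\Lambda((z))$.

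\emph{Step 2: the other directions.} Flatness (Proposition~\ref{prop:quantum-flatness}) carries over to the gauge-transformed connection. The components $\nabla_{\tau^i}$ and $\nabla_{\xi Q\partial_Q}$ are $k_X((z))$-derivations along $\partial_{\tau^i}$ and $Q\partial_Q$, extended to $k_X$ as derivations of the algebraic closure of the fraction field. Consider the off-diagonal block of such a component, mapping $V_\Lambda$ to $V_{\Lambda'}$. Its commutator with $\nabla_z$ vanishes, so it is a horizontal morphism between two $z$-connections with leading terms $-\Lambda/z^2$ and $-\Lambda'/z^2$ (up to nilpotents). The same Sylvester argument, run order by order in $z$, forces such a morphism to vanish. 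The one place this needs care is the leading order, where the commutator identity reads $[\,\Lambda'-\Lambda\,]\cdot X_{\text{lead}}=0$ up to nilpotent corrections. Hence the whole connection preserves the splitting. Alternatively, one can note that $V_\Lambda((z))$ is characterized intrinsically as the part of the module with exponential factor $e^{-\Lambda/z}$ (Hukuhara--Levelt--Turrittin). Any flat operator must respect that characterization, and the $\tau$-derivatives must move $\Lambda$ compatibly. I would present the direct recursive argument.

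\emph{Step 3: the pairing.} $P_X$ pairs a section at $-z$ with one at $z$, and it is $\nabla$-flat. For $\Lambda\ne\Lambda'$, the pairing restricted to $\mathcal M_\Lambda\times\mathcal M_{\Lambda'}$ gives a horizontal map from $\mathcal M_{\Lambda'}$ to the $(-z)$-twisted dual of $\mathcal M_\Lambda$. That dual has exponential factor $e^{+\Lambda/(-z)}=e^{-\Lambda/z}$, so it is of the same exponential type as $\mathcal M_\Lambda$. A horizontal map between modules of different exponential types vanishes by the argument of Step 2, so the blocks are orthogonal. Since $P_X$ is nondegenerate on the whole module and the blocks are mutually orthogonal, its restriction to each block is nondegenerate. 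Flatness on each block is inherited from flatness on the whole module.

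\emph{Main obstacle.} The main obstacle is Step 2. For the bulk and Novikov directions, $\Lambda$ itself depends on $\tau$, and the eigenvalues lie only in the algebraic closure $k_X$. So I must make sure that derivations extend to $k_X$ and that the vanishing argument really handles poles of the $\tau$-components. These components have only a simple pole $z^{-1}$, which is why the order-by-order comparison closes.
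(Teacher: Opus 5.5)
Your proposal is correct and follows essentially the same route as the paper. You use a Sylvester recursion to block-diagonalize $\nabla_z$, and then a lowest-Laurent-term Sylvester argument, driven by flatness, to kill the off-diagonal blocks of the bulk and Novikov components and of the pulled-back pairing; the only minor difference is that you obtain nondegeneracy on each block from global nondegeneracy plus orthogonality, whereas the paper reads it off the constant term of the pairing using $G=I+O(z)$, which also gives nondegeneracy on the lattice $V_\Lambda[[z]]$.
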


\begin{proof}
  Since $C_E$ is self-adjoint, its generalized eigenspaces
  $V_\Lambda$ are mutually orthogonal, and the restricted  Poincar\'e pairing is nondegenerate. However, $\mu_X$ need not
  preserve them, so this is not yet a decomposition of the connection.

  Put $A_\Lambda=C_E|_{V_\Lambda}$. For $\Lambda\ne\Gamma$, the
  Sylvester operator
  \[
    \mathcal S_{\Lambda\Gamma}:
    \Hom_{k_X}(V_\Gamma,V_\Lambda)
    \longrightarrow
    \Hom_{k_X}(V_\Gamma,V_\Lambda),
    \qquad
    X\longmapsto A_\Lambda X-XA_\Gamma
  \]
  is invertible because its semisimple part is $(\Lambda-\Gamma)I$. We use these operators to construct a gauge
  \[
    G(z)=I+G_1z+G_2z^2+\cdots \in I+z\End_{k_X}(V_X)[[z]]
  \]
  making the $z$-connection block diagonal. At the first step,
  \[
    G^{-1}\nabla_zG
    =
    \partial_z-\frac{C_E}{z^2}
    +\frac{\mu_X+[G_1,C_E]}{z}+O(1),
  \]
  so the off-diagonal coefficient is removed by solving
  \[
    \mathcal S_{\Lambda\Gamma}
    \bigl((G_1)_{\Lambda\Gamma}\bigr)
    =(\mu_X)_{\Lambda\Gamma}.
  \]
  At every subsequent order, the unknown off-diagonal coefficient
  enters through the same Sylvester operator, while the remaining
  terms are determined by the previously chosen coefficients.
  Invertibility therefore gives the required gauge recursively.

  Choose a $k_X$-basis adapted to $\bigoplus_\Lambda V_\Lambda$
  and write
  \[
    G^{-1}\nabla_zG
    =\partial_z+\bigoplus_\Lambda\Omega_\Lambda(z),
    \qquad
    \Omega_\Lambda(z)=-A_\Lambda z^{-2}+O(z^{-1}).
  \]
  For a bulk or logarithmic Novikov derivation $\delta$, write
  $G^{-1}\nabla_\delta G=\delta+B_\delta(z)$ in the same basis,
  including derivatives of the basis in $B_\delta$.
  The off-diagonal part of the flatness equation gives
  \[
    \partial_zB_{\delta,\Lambda\Gamma}
    +\Omega_\Lambda B_{\delta,\Lambda\Gamma}
    -B_{\delta,\Lambda\Gamma}\Omega_\Gamma=0.
  \]
  If $B_nz^n$ were its lowest nonzero Laurent term, the coefficient
  of $z^{n-2}$ would give $A_\Lambda B_n-B_nA_\Gamma=0$, 
  contradicting invertibility of $\mathcal S_{\Lambda\Gamma}$.
  Thus every connection component preserves the blocks, and
  $\mathcal M_\Lambda=G(V_\Lambda((z)))$ are differential summands.

  It remains to prove their orthogonality. Let
  $P_{\Lambda\Gamma}(z)$ be the corresponding block of the pairing
  pulled back by $G$, so that
  \[
    P_X(Ga,Gb)
    =
    a(-z)^{\mathsf t}P_{\Lambda\Gamma}(z)b(z),
    \qquad
    a\in V_\Lambda((z)),\quad b\in V_\Gamma((z)).
  \]
  Flatness of the pairing in the $z$-direction gives
  \[
    \partial_zP_{\Lambda\Gamma}
    =
    -\Omega_\Lambda(-z)^{\mathsf t}P_{\Lambda\Gamma}
    +P_{\Lambda\Gamma}\Omega_\Gamma(z).
  \]
  For $\Lambda\ne\Gamma$, a lowest nonzero term $P_nz^n$
  would therefore satisfy $ A_\Lambda^{\mathsf t}P_n-P_nA_\Gamma=0$. Since $A_\Lambda^{\mathsf t}$ and $A_\Gamma$ have disjoint
  spectra, the same Sylvester argument forces $P_n=0$.
  Hence $P_{\Lambda\Gamma}=0$ whenever $\Lambda\ne\Gamma$.

  Finally, $G=I+O(z)$ implies that the constant term of
  $P_{\Lambda\Lambda}(z)$ is the nondegenerate Poincar\'e pairing
  on $V_\Lambda$. Each restricted pairing is therefore
  nondegenerate. It is flat in every direction because the full
  connection preserves $\mathcal M_\Lambda$ and $P_X$ is flat.
\end{proof}

\begin{rem}
  After quotienting the redundant divisor directions by the divisor
  equation, the full quantum connection can be viewed as a maximal formal
  logarithmic $F$-bundle.  In that formulation,
  Proposition~\ref{prop:euler-spectral-block}
is compatible with the formal decomposition in
  \cite[Theorem~3.32]{HYYZZ}; compare also
  \cite[Theorem~4.1]{KKPY}.
\end{rem}

\begin{definition}
    Each summand of the formal decomposition in Proposition \ref{prop:euler-spectral-block} is called a \emph{formal atom} of $X$. We denote it by $\mathrm{At}(X)_\Lambda$
  and write $\mathrm{At}(X)$ for the collection of formal atoms.
\end{definition}

\subsection{Exponent classes}
We recall the Hukuhara--Levelt--Turrittin theorem; see
\cite{HukuharaSingularII,TurrittinIrregular,LeveltJordan}.

\begin{thm}
  \label{thm:levelt-turrittin}
  Let $(\mathcal M,\nabla_z)$ be a finite-dimensional differential module
  over $k_X((z))$ with a $z$-connection $\nabla_z$. After a suitable finite ramification $z=w^m$, there
  is a decomposition
  \begin{equation}
    \mathcal M\otimes_{k_X((z))}k_X((w))
    \simeq
    \bigoplus_\varphi\mathcal E^\varphi\otimes R_\varphi,
  \end{equation}
  where
  \begin{enumerate}
    \item $\varphi\in w^{-1}k_X[w^{-1}]$ and
    $\mathcal E^\varphi=(k_X((w)),d-d\varphi)$;
    \item $R_\varphi$ is regular singular.
  \end{enumerate}
\end{thm}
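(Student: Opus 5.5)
The plan is the classical Turrittin--Levelt argument, run over the algebraically closed field $k_X$ of characteristic zero in place of $\CC$. The argument is purely algebraic: it uses only that the constant field is algebraically closed of characteristic zero. Thus the standard proof applies verbatim. I would structure it as an induction on $\dim_{k_X((z))}\mathcal M$, using a splitting lemma and a reduction to nilpotent leading term.

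\textbf{Step 1 (splitting lemma).} Choose a basis so that $\nabla_z=\partial_z+z^{-r}(A_0+A_1z+\cdots)$, with $A_0\ne0$ and $r\ge 2$ (if $r\le 1$ the module is already regular singular and we are done). If $A_0$ has at least two distinct eigenvalues, the same recursive Sylvester argument as in the proof of Proposition~\ref{prop:euler-spectral-block} produces a gauge $I+O(z)$ that block-diagonalizes the connection according to the generalized eigenspaces of $A_0$. Since $k_X$ is algebraically closed, these eigenvalues lie in $k_X$, so no field extension is needed at this step. We then apply induction to each block.

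\textbf{Step 2 (single eigenvalue).} If $A_0$ has a single eigenvalue $\lambda$, tensor with $\mathcal E^{\psi}$ for a suitable $\psi=\lambda z^{1-r}/(1-r)$ to make $A_0$ nilpotent. This is the main obstacle. When $A_0$ is nilpotent and nonzero, one must either lower the Poincaré rank by a shearing transformation $\diag(w^{k_i})$ after a ramification $z=w^m$, or reach a leading term with distinct eigenvalues. I would use Katz's invariant, namely the slope computed from the characteristic polynomial of the connection matrix via a cyclic vector. That slope is a rational number $p/q$ with $q\le\dim\mathcal M$. After ramification $z=w^q$ and a suitable shearing, the new leading coefficient is non-nilpotent, so either Step 1 applies, or we twist again and the Poincaré rank strictly drops. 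Termination follows because the rank is a nonnegative rational number with bounded denominator. The ramification indices multiply, but remain bounded in terms of $\dim\mathcal M$, so a single finite ramification $z=w^m$ suffices at the end.

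\textbf{Step 3 (assembly).} Collect the exponential twists $\varphi\in w^{-1}k_X[w^{-1}]$ produced along the way. Each terminal block is regular singular (Poincaré rank $\le1$ after gauge), which gives the summands $\mathcal E^\varphi\otimes R_\varphi$. Blocks with equal $\varphi$ can then be grouped together.
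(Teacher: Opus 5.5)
Your proposal is correct and takes the same route as the paper. The paper gives no argument of its own and simply invokes the classical Hukuhara--Turrittin--Levelt theorem, and your observation that the classical proof uses only that the constant field $k_X$ is algebraically closed of characteristic zero is exactly what justifies that citation over $k_X$.

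The one step you assert rather than prove is that the ramification indices stay bounded; it holds for the following reason. If the Katz slope is $p/q$ in lowest terms with $q\ge2$, then after $z=w^q$ the nonzero eigenvalues of the new leading coefficient form a $\mu_q$-stable set of size at least $q$, because the module is pulled back from $k_X((z))$. So every nontrivial ramification is immediately followed by a splitting in Step~1. Within a fixed rank, only twists without further ramification occur, and these strictly lower a slope whose denominator is at most $\dim\mathcal M$.
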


Fix a minimal ramification $z=w^m$.
The distinct exponential factors $\varphi$ and the isomorphism classes of their
regular factors $[R_\varphi]$ are unique, up to permutation.
Here regular singularity means that $R_\varphi$ admits a full-rank
$k_X[[w]]$-lattice stable under $\nabla_{w\partial_w}$.
A regular-singular module $R$ admits a logarithmic normal form
after a meromorphic change of frame:
\[
  R\simeq\left(k_X((w))^r,d-A\frac{dw}{w}\right),
  \qquad A\in\End_{k_X}(k_X^r).
\]
For the eigenvalues $a_1,\ldots,a_r$ of $A$,  define the
\emph{multiset of exponent classes}
\begin{equation}
  \label{eq:regular-exponents}
  \Exp(R)=\{[a_1],\ldots,[a_r]:[a_i]\in k_X/\mathbb Z\}.
\end{equation}
Changing the logarithmic lattice changes the residue eigenvalues only
by integers, so this multiset is intrinsic; see
\cite[Proposition~2.2.15]{KedlayaGoodFormalI}.
Each HLT summand admits a basis of formal
$w$-horizontal solutions whose scalar components have the form
\[
  e^{\varphi(w)}w^\rho
  \sum_{j=0}^{\ell}f_j(w)(\log w)^j,
  \qquad f_j(w)\in k_X((w)),\quad \rho\in k_X,
\]
where $[\rho]\in\Exp(R_\varphi)$.

\begin{definition}
    A $z$-connection is of \emph{unramified exponential type} if it admits a decomposition
\[
  \mathcal M\simeq
  \bigoplus_\Lambda\mathcal E^{-\Lambda/z}\otimes R_\Lambda,
  \qquad \Lambda\in k_X,
  \qquad R_\Lambda\text{ regular singular in }z.
\]
\end{definition}

Suppose now that $k_X$ carries finitely many commuting derivations
$\delta$, extended by $\delta(z)=0$. An integrable extension
of $\nabla_z$ consists of connection operators $\nabla_\delta$
satisfying the Leibniz rule and
\[
  [\nabla_\delta,\nabla_z]=0,
  \qquad
  [\nabla_\delta,\nabla_{\delta'}]=0.
\]
Extend these derivations by $\delta(w)=0$. The HLT summands are
preserved by all connection components, and the regular factors inherit
integrable connections after the exponential twists are taken in all
directions; see \cite[Theorem~2.3.3]{KedlayaGoodFormalI}.
If
\[
  \bigcap_\delta\Ker(\delta:k_X\to k_X)=\CC,
\]
then $\Exp(R_\varphi)\subset\CC/\mathbb Z$ by \cite[Proposition~2.2.12]
{KedlayaGoodFormalI}. In this case the representatives $\rho$ in the solutions above may be taken in $\CC$.

We apply this discussion to the quantum connection $\nabla$. 
Integrability follows from
Proposition~\ref{prop:quantum-flatness}, and its coefficient derivations satisfy
\begin{equation}
  \label{eq:qdm-joint-constants}
  \bigcap_i\Ker(\partial_{\tau^i}:k_X\to k_X)
  \cap\bigcap_{\xi\in H^2(X,\CC)}
  \Ker(\xi Q\partial_Q:k_X\to k_X)=\CC.
\end{equation}
Applying Theorem~\ref{thm:levelt-turrittin} to $\mathcal M_\Lambda$, we obtain
\begin{equation}
  \mathcal M_\Lambda\otimes_{k_X((z))}k_X((w))
  \simeq
  \bigoplus_a\mathcal E^{\varphi_{\Lambda,a}}\otimes R_{\Lambda,a}.
\end{equation}
after minimal ramification $z=w^m$,
and
\[
  \Exp_{\Lambda,a}:=\Exp(R_{\Lambda,a})\subset\CC/\mathbb Z,
\]
with exponents taken in the coordinate $w$.

To the formal atom $\mathrm{At}(X)_\Lambda$, we assign the following triple, which we call \emph{formal atomic invariants}
\[
  \boxed{
    \left(
      m_\Lambda,\mathcal J_\Lambda,
      \left\{(\varphi_{\Lambda,a},[R_{\Lambda,a}])\right\}_a
    \right).
  }
\]
In applications we also use the coarser data
\[
  \boxed{
    \left(
      m_\Lambda,\mathcal J_\Lambda,
      \left\{(\varphi_{\Lambda,a},\Exp_{\Lambda,a})\right\}_a
    \right).
  }
\]

In particular, when $\mathcal{M}_\Lambda$ is of unramified exponential type, i.e $m=1$, the only exponential factor is $-\Lambda/z$, and
\[
  \mathcal M_\Lambda\simeq
  \mathcal E^{-\Lambda/z}\otimes R_\Lambda,
  \qquad R_\Lambda\text{ regular singular}.
\]
We then omit the fixed exponential factor from the notation and write
$\Exp_\Lambda=\Exp(R_\Lambda)$.

\begin{rem}
    The exponent multiset does not record the size of the logarithmic Jordan blocks. For
$\alpha\in\CC$, put
\[
  \mathcal K_\alpha
  =\left(k_X((w)),d-\alpha\frac{dw}{w}\right).
\]
The module $\mathcal K_\alpha^{\oplus2}$ and a nontrivial
self-extension of $\mathcal K_\alpha$ both have exponent multiset
$\{[\alpha],[\alpha]\}$, but only the latter has a nontrivial
unipotent part of formal monodromy. By contrast, a rank-two regular
factor with exponent classes $[\alpha]\ne[\beta]$ splits into the
two corresponding rank-one factors.
\end{rem}

\begin{rem}
  The small quantum connection of a monotone symplectic
  manifold is of unramified exponential type
  \cite{ChenExponentialType,PomerleanoSeidel}.
  For our purposes, if we assume by contradiction that $X$ is rational, then by Proposition~\ref{prop:spectral-blowup} the atoms of $X$ come from atoms of projective space, points, curves, and surfaces, and thus are of unramified exponential type.
\end{rem}

\subsection{Specialization and deformation}
In practice, computing formal atoms for $\dim_\CC X\geq3$ is difficult
because the big quantum connection is hard to determine. We therefore
restrict to a simpler base, such as $\tau=0$ for the small quantum
connection, and ask which invariants of the restricted connection
extend to the big quantum connection.

For $k\geq0$, let
\[
  E^0=e,
  \qquad
  E^{k+1}=E\circ E^k,
\]
where $\circ$ is the formal $F$-manifold multiplication.

\begin{lem}
  \label{lem:jordan-flow}
  Let $\varphi_k^s$ be the formal flow of $E^k$. Distinct
  eigenvalues of $C_E$ remain distinct along this flow, and each
  corresponding generalized eigenspace retains its algebraic multiplicity
  and Jordan partition.
\end{lem}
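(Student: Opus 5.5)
The plan is to show that, along the flow of $E^k$, the operator $C_E$ is transported by conjugation plus a shift of its eigenvalues. That is, if $T_s$ denotes the linearised flow (the derivative of $\varphi_k^s$, relative to the Novikov base), we aim for
\[
  C_E\big|_{\varphi_k^s(t)}\circ T_s = T_s\circ F_s\bigl(C_E|_t\bigr)
\]
for a suitable transformation $F_s$ that acts on spectra injectively. Granting this, distinct eigenvalues stay distinct, and multiplicities and Jordan partitions are preserved.

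The key input is the structure of an $F$-manifold with Euler field. We have $\mathrm{Lie}_E(\circ)=\circ$, and vector fields of the form $X\circ E^k$ for $E^k$ in the subalgebra generated by $E$ interact well with the multiplication. Concretely, I would use the Hertling–Manin identity for $F$-manifolds,
\[
  \mathrm{Lie}_{X\circ Y}(\circ)=X\circ\mathrm{Lie}_Y(\circ)+Y\circ\mathrm{Lie}_X(\circ),
\]
applied inductively with $X=E$ and $Y=E^{k-1}$. From $\mathrm{Lie}_e(\circ)=0$ and $\mathrm{Lie}_E(\circ)=\circ$ one gets
\[
  \mathrm{Lie}_{E^k}(\circ)=k\,E^{k-1}\circ(\cdot\circ\cdot).
\]
It then follows that
\[
  \mathrm{Lie}_{E^k}(E)=[E^k,E]=-\mathrm{Lie}_E(E^k)=-(k-1)E^k,
\]
using that $\mathrm{Lie}_E$ acts by $1$ on $\circ$ and kills $E$. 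Differentiating $C_E=E\circ(-)$ along $E^k$ therefore gives
\[
  \mathrm{Lie}_{E^k}(C_E)=k\,C_E^{k}-(k-1)C_E^{k}=C_E^{k},
\]
where $C_E^k$ is the $k$-th power of multiplication by $E$. So in a frame transported by the flow, the family $C(s)$ satisfies the ODE $\dot C=C^k$, with $C(s)$ commuting with $C(0)$ because it is a polynomial in it.

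Next I would solve this ODE formally in $s$ by functional calculus: $C(s)=f_s(C(0))$, where $f_s$ solves $\dot f=f^k$ with $f_0(x)=x$. For $k=0$ this is $x+s$; for $k=1$ it is $e^s x$; for $k\ge2$ it is $x(1-(k-1)sx^{k-1})^{-1/(k-1)}$, understood as a formal power series in $s$. Each $f_s$ is a formal automorphism in $x$, with derivative $1+O(s)$ and injective on $k_X$-points as formal maps. Hence $f_s(C(0))$ has eigenvalues $f_s(\Lambda)$, which remain distinct for distinct $\Lambda$. Moreover, on each generalized eigenspace $f_s(C)-f_s(\Lambda)=(C-\Lambda)\cdot u$ with $u$ a unit, since $f_s'(\Lambda)\neq0$, so the Jordan structure is preserved.

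The main obstacle is making the "transported frame'' rigorous over the formal base. The Lie derivative identity holds for tensors, and one must check that the pushforward $T_s=d\varphi_k^s$ intertwines $C_E$ at the two points up to $f_s$. I would handle this by integrating $\mathrm{Lie}_{E^k}C_E=C_E^k$ as an identity for the pulled-back endomorphism $(\varphi_k^s)^*C_E$, treating it as a formal power series in $s$ over $R_X$, where existence and uniqueness of formal solutions is automatic. Carefully verifying the Hertling–Manin identity in the relative formal setting is the other point that needs checking.
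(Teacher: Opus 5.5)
Your proposal is correct and follows essentially the same route as the paper: the identity $\mathcal L_{E^k}C_E=C_E^k$ (which you derive from the Hertling--Manin identity together with $\mathcal L_E(E^k)=(k-1)E^k$, where the paper simply cites Manin), the formal ODE $\frac{dA}{ds}=A^k$ for $A(s)=(\varphi_k^s)^*C_E$ solved by $A(s)=g_{k,s}(A(0))$, and the observations that $g_{k,s}(\lambda I+N)-g_{k,s}(\lambda)I=N\,U(N)$ with $U(N)$ a commuting unit and that $g_{k,s}(\lambda)-g_{k,s}(\mu)=(\lambda-\mu)(1+O(s))$. One point to tidy: you assert that $C(s)$ commutes with $C(0)$ ``because it is a polynomial in it'', but that is only known after you invoke uniqueness of the formal solution in $s$, as the paper does, so it should not be assumed beforehand.
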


\begin{proof}
  The standard $F$-manifold identities give
  \begin{equation}
    \mathcal L_{E^k}C_E=C_E^k;
    \label{eq:euler-flow}
  \end{equation}
  see \cite[I.2 and I.5]{ManinFrobenius}. Thus
  $A(s):=(\varphi_k^s)^*C_E$ satisfies
  \begin{equation}
    \frac{dA}{ds}=A^k,
    \qquad A(0)=C_E.
    \label{eq:euler-flow-ode}
  \end{equation}
  Let $g_{k,s}(x)$ solve $\partial_sg=g^k$ with $g_{k,0}(x)=x$.
  Explicitly, $g_{0,s}(x)=x+s$, $g_{1,s}(x)=e^s x$, and, for $k\geq2$,
  \[
    g_{k,s}(x)
    =
    x\left(1-(k-1)sx^{k-1}\right)^{-1/(k-1)}.
  \]
  Uniqueness gives $A(s)=g_{k,s}(A(0))$, and
  $\partial_xg_{k,s}(x)=1+O(s)$ is a unit.

  On a Jordan block $\lambda I+N$, Taylor expansion gives
  \[
    g_{k,s}(\lambda I+N)-g_{k,s}(\lambda)I=N\,U(N),
    \qquad U(0)=\partial_xg_{k,s}(\lambda).
  \]
  Thus $U(N)$ is invertible and commutes with $N$, so
  \[
    \Ker\left(g_{k,s}(\lambda I+N)-g_{k,s}(\lambda)I\right)^\ell
    =\Ker N^\ell\qquad(\ell\geq1).
  \]
  These kernels determine the Jordan partition and hence the
  multiplicity. Finally,
  \[
    g_{k,s}(\lambda)-g_{k,s}(\mu)
    =(\lambda-\mu)(1+O(s)),
  \]
  so distinct eigenvalues remain distinct.
\end{proof}

\begin{rem}
  The same Euler-flow mechanism is used in
  \cite[Lemmas~5.4--5.5]{CaiQuarticSymplecticIrrationality}.
  Recently, the first author was informed that similar arguments had been carried out in \cite{DavidHertlingEulerFlow} in the context of complex analytic $F$-manifolds.
\end{rem}

Let $S=\Spf(R_S)\subset\mathcal B_X$ be a smooth formal subscheme
  over the fixed Novikov base. After localization along $S$, choose
  normal bulk coordinates $v=(v_1,\ldots,v_c)$ such that the normal
  formal neighborhood is $\Spf R_S\llbracket v\rrbracket $. Write $u$ for local coordinates along $S$. 
  Fix an HLT solution basis and its ramification for the restricted
  $z$-connection. 

\begin{lem}
  \label{prop:formal-slice-solution-extension}
  Every solution $Y_S$ of $\nabla_z|_S\,Y_S=0$ with
  fractional exponent class $[\rho]$ extends to a
  solution $Y$ with $Y|_{v=0}=Y_S$ and the same exponent class.
\end{lem}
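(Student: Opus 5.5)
We solve the flatness equations order by order in the normal coordinates $v$. Write the full $z$-connection on the normal neighborhood $\Spf R_S\llbracket v\rrbracket$ and use the normal bulk components $\nabla_{v_j}=\partial_{v_j}+z^{-1}(\partial_{v_j}\star)$, which commute with $\nabla_z$ by Proposition~\ref{prop:quantum-flatness}. The idea is to define the extension $Y$ as the parallel transport of $Y_S$ in the $v$-directions: we look for
\[
  Y=\sum_{\alpha\in\mathbb N^c}Y_\alpha\,v^\alpha,\qquad Y_0=Y_S,
\]
with each $Y_\alpha$ lying in the same solution space as $Y_S$, namely $e^{\varphi(w)}w^\rho$ times polynomials in $\log w$ with coefficients in $R_S$-valued (or $k$-valued, after localization) Laurent series in $w$. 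We impose $\nabla_{v_j}Y=0$ for all $j$. Expanding $\nabla_{v_j}$ in powers of $v$ gives recursive equations
\[
  (\alpha_j+1)Y_{\alpha+e_j}=-z^{-1}\sum_{\beta+\gamma=\alpha}(\partial_{v_j}\star)_\beta\,Y_\gamma .
\]
These determine all $Y_\alpha$ uniquely from $Y_S$. Compatibility between different $j$ at each order follows from $[\nabla_{v_i},\nabla_{v_j}]=0$, which is WDVV. So the formal integration in $v$ is just the standard construction of a formal flat section on a product with a formal disc, done by induction on $|\alpha|$.

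Next we check that $\nabla_zY=0$. Since $[\nabla_z,\nabla_{v_j}]=0$, the element $Z:=\nabla_zY$ also satisfies $\nabla_{v_j}Z=0$ for all $j$. By the uniqueness in the recursion, a $v$-flat section is determined by its restriction to $v=0$. That restriction is $\nabla_z|_SY_S=0$, because $\partial_z$ and the restriction of $C_E,\mu_X$ to $v=0$ commute with setting $v=0$. Hence $Z=0$.

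The exponent class is preserved because multiplication by $z^{-1}=w^{-m}$ and by the structure constants (power series in $v$ with coefficients in the base ring) does not change the factor $e^{\varphi}w^\rho$. Each $Y_\alpha$ therefore has the form $e^{\varphi}w^\rho\sum_j f_{\alpha,j}(\log w)^j$ with $f_{\alpha,j}\in k((w))$, where $k$ is the coefficient field along $S$, and the class $[\rho]$ is unchanged.

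The main obstacle is bookkeeping of the ambient field and the convergence or well-definedness of the resulting formal object. Two points need care. First, the $\log w$-degree must stay bounded. This holds because $Y$ lies in the span of the finitely many HLT solutions of the extended connection, whose $\log$-degree is bounded by the rank; alternatively, one compares with an HLT basis of the full connection over $k_X$, which exists by Theorem~\ref{thm:levelt-turrittin}. Second, one must check that the exponential factor $\varphi$ is constant in $v$. This is where the fixed HLT structure is used. I would handle it by noting that the $v$-recursion multiplies by elements independent of $w$ up to $z^{-1}$ factors, so the Laurent series $f_{\alpha,j}$ may have poles in $w$ that grow with $|\alpha|$. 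This is allowed, since each coefficient of $v^\alpha$ separately lies in $k((w))$. Accordingly, $Y$ is interpreted in $(R_S\llbracket v\rrbracket)$-adically completed form, and its class $[\rho]$ is read off termwise.
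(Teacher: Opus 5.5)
Your proposal is correct and is essentially the paper's argument. The paper also parallel-transports along the normal directions by the same recursion, packaged as a fundamental matrix $M$ with $M|_{v=0}=I$ and $Y=MY_S$. It obtains $\nabla_zY=0$ from $[\nabla_z,\nabla_{v_a}]=0$ together with evaluation at $v=0$, and it preserves $[\rho]$ because each $v$-coefficient involves only integral powers of $z$, hence of $w$.

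One caveat: your appeal to HLT solutions of the extended connection to bound the $\log w$-degree is unnecessary, since the recursion never introduces logarithms. It is also not justified as stated: with $\varphi$ frozen at its value on $S$, the coefficients can have $w$-pole order unbounded in the $v$-degree (e.g.\ transporting $e^{-\lambda/z}$ along $\nabla_s=\partial_s+p/z$ gives $e^{-\lambda/z}\sum_{n\ge0}(-ps)^n/(n!\,z^n)$), which is exactly why the lemma only asserts a coefficientwise exponent class.
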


\begin{proof}
  Write
  \[
    \nabla_{v_a}=\partial_{v_a}+\frac{P_a(u,v)}z.
  \]
  Flatness gives a unique compatible formal solution
  \[
    \partial_{v_a}M=-\frac{P_a}{z}M,
    \qquad
    M|_{v=0}=I.
  \]
  Expanding $P_a=\sum_\eta P_{a,\eta}v^\eta$ and
  $M=\sum_\nu M_\nu v^\nu$ gives the recursion
  \[
    (\nu_a+1)M_{\nu+e_a}
    =
    -z^{-1}\sum_{\eta+\kappa=\nu}P_{a,\eta}M_\kappa.
  \]
  Since the $P_{a,\eta}$ are independent of $z$, every $M_\nu$ is
  a polynomial in $z^{-1}$. The same holds coefficientwise for $M^{-1}$.
  Moreover,
  \[
    M^{-1}\nabla_{v_a}M=\partial_{v_a},
    \qquad M^{-1}\nabla_zM=\nabla_z|_S,
  \]
  the second identity following from flatness and evaluation at $v=0$.
  Hence $Y=MY_S$ is the required extension. Both $M$ and $M^{-1}$
  change powers of $z$ only by integers, proving preservation of the
  exponent classes, linear independence, and multiplicity. After $z=w^m$, the
  same recursion involves only integral powers of $w$.
\end{proof}

Let $S=\Spf(R_S)\subset\mathcal B_X$ be a smooth integral formal
subscheme of relative codimension $r$ over $\CC\llbracket Q\rrbracket $. Let $K_S=\Frac(R_S)$ and $k_S=\overline{K_S}$, and write
\[
  TS=\Theta_{S/\CC\llbracket Q\rrbracket}\otimes_{R_S}K_S
  \subset H_X\otimes_\CC K_S.
\]
For an eigenvalue $\lambda$ of $C_E|_S$ over $k_S$, define
$(m_\lambda,\mathcal J_\lambda)$ as in
Definition~\ref{def:euler-spectral-data}.
Write $\Exp^{\mathrm{cw}}$ for the fractional-exponent classes of a chosen solution basis on $S$, 
transported coefficientwise along the normal variables as in Lemma \ref{prop:formal-slice-solution-extension}.
A priori, this can be different from the HLT exponents $\Exp$ over $k_X((z))$.

Suppose that quantum powers of the Euler vector field $E^{k_1},\ldots,E^{k_r}$ satisfy
\begin{equation}
  TS+\Span_{K_S}\{E^{k_1}|_S,\ldots,E^{k_r}|_S\}
  =H_X\otimes_\CC K_S.
  \label{eq:formal-subscheme-spanning}
\end{equation}

\begin{thm}
  \label{thm:formal-subscheme-persistence}
  Assume \eqref{eq:formal-subscheme-spanning}. 
  After an extension of coefficient fields, every eigenvalue $\lambda$ of
  $C_E|_S$ determines an eigenvalue $\Lambda$ of $C_E$ over $k_X$, such that
  \[
    m_\Lambda=m_\lambda,
    \qquad
    \mathcal J_\Lambda=\mathcal J_\lambda,
    \qquad
    \Exp^{\mathrm{cw}}_\Lambda=\Exp^{\mathrm{cw}}_\lambda.
  \]
\end{thm}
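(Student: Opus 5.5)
The plan is to sweep $S$ out along the commuting flows of $E^{k_1},\ldots,E^{k_r}$, and to transport all three invariants along these flows.

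\emph{Step 1: a formal tubular chart.} The vector fields $E^k$ pairwise commute: in a formal $F$-manifold with Euler field, $[E^k,E^l]=(l-k)E^{k+l-1}$ up to terms that vanish here. More precisely, the multiplication identity $\mathcal L_{E}(\circ)=\circ$ together with $E^k=E^{\circ k}$ gives $[E^k,E^l]=(l-k)E^{k+l-1}$. This is not zero, so I will not rely on commutativity. Instead I define the composite map
\[
\Phi:S\times\widehat{\mathbb A}^r\to\mathcal B_X,\qquad (p,s)\mapsto\varphi_{k_1}^{s_1}\circ\cdots\circ\varphi_{k_r}^{s_r}(p).
\]
By \eqref{eq:formal-subscheme-spanning}, its differential along $s=0$ is generically an isomorphism. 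After localizing $R_S$, i.e.\ extending the coefficient field from $k_X$ to something containing $k_S$, $\Phi$ is therefore a formal isomorphism. The coordinates $s$ then serve as the normal coordinates $v$ of Lemma~\ref{prop:formal-slice-solution-extension}, and $\Phi^*$ identifies $k_X$ with an algebraic extension of $k_S\llbracket s\rrbracket$ (after taking fraction fields and closures). This is the "extension of coefficient fields" in the statement.

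\emph{Step 2: eigenvalues and Jordan data.} Pull $C_E$ back by a single flow $\varphi_k^s$. Lemma~\ref{lem:jordan-flow} gives $(\varphi_k^s)^*C_E=g_{k,s}(C_E)$. Composing the $r$ flows, $\Phi^*C_E=g(C_E|_S)$, where $g=g_{k_1,s_1}\circ\cdots\circ g_{k_r,s_r}$ is a formal power series in $x$ and $s$ with $\partial_xg=1+O(s)$. More precisely, $\Phi^*C_E$ is conjugate to $g(C_E|_S)$ via the identification of the tangent spaces under the flow; the tangent identification is the differential of the flow, which intertwines $C_E$ by the lemma's ODE argument applied one direction at a time. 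The proof of Lemma~\ref{lem:jordan-flow} then applies verbatim:
\begin{itemize}
\item $\lambda\mapsto\Lambda=g(\lambda)$ is injective on eigenvalues;
\item the kernels $\Ker(g(A)-g(\lambda))^\ell=\Ker(A-\lambda)^\ell$ coincide.
\end{itemize}
This gives $m_\Lambda=m_\lambda$ and $\mathcal J_\Lambda=\mathcal J_\lambda$. Since every eigenvalue of $C_E$ over $k_X$ arises in this way (the characteristic polynomials correspond), we also obtain the bijection.

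\emph{Step 3: exponents.} Take the HLT solution basis on $S$ belonging to the $\lambda$-block. Lemma~\ref{prop:formal-slice-solution-extension}, applied with normal coordinates $s$, extends it to solutions of the full connection, with the same coefficientwise exponent classes and the same linear independence. These extended solutions have exponential factor $e^{-g(\lambda)/z}$ at leading order. The reason is that $M$ is the $z$-polynomial transport, which is compatible with $\nabla_z$ and with the gauge of Proposition~\ref{prop:euler-spectral-block}. They therefore span the block $\mathcal M_\Lambda$ by a rank count, and hence $\Exp^{\mathrm{cw}}_\Lambda=\Exp^{\mathrm{cw}}_\lambda$.

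\emph{Main obstacle.} I expect the hardest step to be Step 2's claim that the composite of non-commuting flows still conjugates $C_E$ to a function of $C_E|_S$. Each flow pulls $C_E$ back to $g_{k,s}(C_E)$ by \eqref{eq:euler-flow-ode}, but $\mathcal L_{E^{k_j}}$ of $g(C_E)$ is again a polynomial in $C_E$ of the same shape, because the $C_E$-derivative of $g$ multiplies by $C_E^{k_j}$. So an induction on $j$ should show that the pullback lies in the commutative algebra $k\llbracket s\rrbracket[C_E|_S]$ with unit derivative in $x$. I would carry out this induction first, and then check the identification of tangent frames carefully.
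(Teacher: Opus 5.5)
Your Steps 1 and 2 follow the paper's proof: the composite chart of Euler-power flows, then Lemma~\ref{lem:jordan-flow} applied flow by flow. What you call the main obstacle is not one. Each flow conjugates $C_E$ at $\varphi_k^{s}(p)$ to $g_{k,s}(C_E(p))$ pointwise, and functional calculus commutes with conjugation. So the composite gives $g_{k_1,s_1}\circ\cdots\circ g_{k_r,s_r}(C_E|_S)$ up to conjugation, with no induction needed. (A minor point: the chart embeds $k_X$ into $\overline{\Frac(k_S\llbracket s\rrbracket)}$; it does not make $k_X$ algebraic over $k_S\llbracket s\rrbracket$. This is harmless.)

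The genuine gap is in Step 3, at the claim that the transported $\lambda$-solutions lie in $\mathcal M_\Lambda$. The reason you give does not hold.
\begin{itemize}
\item Coefficientwise in $s$, the solutions $MY_S$ from Lemma~\ref{prop:formal-slice-solution-extension} still carry the factor $e^{-\lambda/z}$.
\item The passage from $\lambda$ to $\Lambda=g(\lambda)$ shows up only as negative powers of $z$ whose order grows with the $s$-degree. This is exactly the example $e^{-ps/z}$ in the remark after the theorem, so there is no ``leading-order'' exponential factor to read off.
\item For the same reason, $M$ is not a matrix over $k_X((z))$. The gauge of Proposition~\ref{prop:euler-spectral-block} has entries in $k_X$, which need not be power series in $s$. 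The two cannot be composed coefficientwise, so ``compatible with the gauge'' has no content as stated.
\end{itemize}

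The missing ingredient is the central step of the paper's proof. By your Step 2, the $\lambda$-factor of the characteristic polynomial is coprime to its complement over $A=k_S\llbracket s_1,\ldots,s_r\rrbracket$. Hence the Sylvester operators are invertible over $A$, and the proof of Proposition~\ref{prop:euler-spectral-block} runs over $A$. This produces a gauge $G(s,z)\in I+z\End_A(H_X\otimes_\CC A)[[z]]$ for which $\mathcal H_\lambda[[z]][z^{-1}]$ is a summand preserved by every connection component, including the transverse directions $\partial_{s_a}$.

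One then solves the transport equation inside this summand, as the paper does. Equivalently, $G(s,z)^{-1}M\,G(0,z)$ solves the block-diagonal transverse system with initial value $I$, so it is block-diagonal by uniqueness of the $s$-recursion. Only after this does your rank count place the transported $\lambda$-solutions in $\mathcal M_\Lambda$ and give $\Exp^{\mathrm{cw}}_\Lambda=\Exp^{\mathrm{cw}}_\lambda$.
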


\begin{proof}
  The successive Euler-power flows define
  \[
    F:\Spf R_S\llbracket s_1,\ldots,s_r\rrbracket
    \longrightarrow\mathcal B_X,
    \qquad
    F(u,s)=\varphi_{k_r}^{s_r}\circ\cdots\circ\varphi_{k_1}^{s_1}(u),
  \]
  with differential along $s=0$ given by
  \[
    dF_{(u,0)}\left(\xi+\sum_j a_j\partial_{s_j}\right)
    =
    \xi+\sum_j a_jE^{k_j}(u).
  \]
  After localization along $S$, condition
  \eqref{eq:formal-subscheme-spanning} and the formal implicit function
  theorem identify this chart with the normal formal neighborhood
  along $S$ over $K_S$. 
  By Lemma~\ref{lem:jordan-flow}, the eigenvalue multiplicities and Jordan partitions are preserved along the successive flows.

  Extend the coefficient field to $k_S$ and put
  $A=k_S\llbracket s_1,\ldots,s_r\rrbracket$.
  The factor of the characteristic polynomial belonging to $\lambda$ is
  coprime to its complementary factor over $A$. The proof of
  Proposition~\ref{prop:euler-spectral-block} therefore applies over
  $A$: the Sylvester operators are invertible over this ring, and the
  resulting gauge $G(s,z)\in I+z\End_A(H_X\otimes_\CC A)[[z]]$ makes $\mathcal H_\lambda[[z]][z^{-1}]$, where $\mathcal H_\lambda$ is the corresponding spectral summand of $H_X\otimes_\mathbb{C}A$, a differential summand in
  the transformed frame, preserved by the connection. After extension of coefficients, it is isomorphic to
  $\mathcal M_\Lambda$.

  On this summand write the transverse connections as
  $\partial_{s_a}+D_a(s,z)$ and solve
  \[
    \partial_{s_a}M_\lambda=-D_aM_\lambda,
    \qquad
    M_\lambda|_{s=0}=I.
  \]
  The coefficient recursion from the proof of
  Lemma~\ref{prop:formal-slice-solution-extension} applies also
  to these Laurent-series matrices: each fixed $s$-coefficient
  involves only finitely many products of Laurent series. Thus
  \[
    M_\lambda,\ M_\lambda^{-1}
    \in\End_{k_S}(\mathcal H_\lambda|_{s=0})((z))
      \llbracket s_1,\ldots,s_r\rrbracket.
  \]
  Flatness again gives
  $M_\lambda^{-1}\nabla_zM_\lambda=\nabla_z|_{s=0}$.
  Multiplication by $M_\lambda$ therefore transports the
  solution basis without changing its exponent classes.
\end{proof}

\begin{rem}
  The transport matrix lies in $k_S((z))\llbracket s\rrbracket$,
  but need not lie in
  $k_S\llbracket s\rrbracket[[z]][z^{-1}]$: its lowest $z$-order may
  tend to $-\infty$ with the $s$-degree. Thus the coefficientwise
  comparison alone does not identify the HLT factors over $k_X((z))$.
  For example, let $p,\lambda,\mu\in\CC$ and consider the flat connection
  \[
    \nabla_s=\partial_s+\frac pz,
    \qquad
    \nabla_z=\partial_z-\frac{\lambda+ps}{z^2}+\frac\mu z.
  \]
  Parallel transport from $s=0$ gives
  \[
    e^{-(\lambda+ps)/z}z^{-\mu}
    =
    e^{-\lambda/z}z^{-\mu}
    \sum_{n\geq0}\frac{(-ps)^n}{n!z^n}.
  \]
  Every coefficient retains the class $[-\mu]$, but for $p\ne0$
  the transport matrix $e^{-ps/z}$ has no uniform lowest $z$-order.
\end{rem}

Let $K$ be an algebraically closed field of characteristic zero,
  $A=K\llbracket v_1,\ldots,v_c\rrbracket $, and $F=\Frac(A)$.
  Let $\mathcal M$ be a free finite-rank differential module over
  $A[[z]][z^{-1}]$, integrable with respect to $\partial_z$ and the
  $\partial_{v_a}$, whose $z$-connection is
  \[
    \nabla_z=\partial_z-\frac{U(v)}{z^2}+O(z^{-1}),\qquad
    U\in\Mat_m(A).
  \]
The following proposition allows us to compare regular factors, and in particular exponent classes, without the Euler-spanning hypothesis.
\begin{prop}
  \label{lem:non-split-unramified}
 Suppose $\det(T-U)=(T-\Lambda)^m$ with $\Lambda\in A$, and the
  $z$-module obtained by extending scalars to $\overline F((z))$ is
  of unramified exponential type.
  Then $\mathcal E^{\Lambda/z}\otimes\mathcal M$, with the twist
  taken in all directions, admits a normal form
  \begin{equation}
    \label{eq:non-split-logarithmic-form}
    \nabla_z=\partial_z+\frac Cz,
    \qquad \nabla_{v_a}=\partial_{v_a},
    \qquad C\in\Mat_m(K),
  \end{equation}
  over $A[[z]][z^{-1}]$.
 Consequently the regular factors over $\overline F((z))$ and at
  $v=0$ are isomorphic after extension of coefficients. In particular,
  \begin{equation}
    \label{eq:non-split-exponents}
    \Exp_{\Lambda}=\Exp_{v=0}.
  \end{equation}
\end{prop}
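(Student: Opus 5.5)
We first twist by $\mathcal E^{\Lambda/z}$ in all directions. Concretely, we replace $\nabla_z$ by $\nabla_z+\Lambda/z^2$ and each $\nabla_{v_a}$ by $\nabla_{v_a}-(\partial_{v_a}\Lambda)/z$. The resulting module $\mathcal M'=\mathcal E^{\Lambda/z}\otimes\mathcal M$ is still integrable over $A[[z]][z^{-1}]$. Its $z$-connection is $\partial_z-N(v)/z^2+O(z^{-1})$ with $N=U-\Lambda$ nilpotent. By hypothesis, over $\overline F((z))$ the twisted module is of unramified exponential type. Since $\det(T-U)=(T-\Lambda)^m$, every exponential factor must be $-\Lambda/z$, so after twisting all factors are trivial and $\mathcal M'\otimes\overline F((z))$ is regular singular.

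The second step is to produce a logarithmic lattice over $A$, not only over $\overline F$. The plan is to use the integrability in the $v$-directions. The $v_a$-connections have pole order at most $1$ in $z$, with coefficients in $A$. Flatness in $z$ controls the $z$-order, so a Moser/Babbitt--Varadarajan style reduction can be run uniformly over $A$.

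The cleanest route is to restrict to $v=0$ first. Over $K((z))$, the restriction $\mathcal M'|_{v=0}$ is regular singular: its irregularity is upper semicontinuous under specialization, but that alone is not enough. Instead, we argue from the nilpotent leading term and the Katz invariant. Since regular singularity over $\overline F((z))$ means the Newton polygon has slope $0$, the characteristic polynomial coefficients of a cyclic-vector presentation have the required pole bounds in $A((z))$. Specializing $v=0$ keeps these bounds, so $\mathcal M'|_{v=0}$ is regular singular over $K((z))$. We then choose a logarithmic normal form $\partial_z+C/z$ at $v=0$ with $C\in\Mat_m(K)$, after a gauge in $\mathrm{GL}_m(K((z)))$. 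We may take $C$ nonresonant after shearing, or simply in Levelt normal form.

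Next we transport along $v$ by the fundamental solution $M$ of $\partial_{v_a}M=-D_aM$, $M|_{v=0}=I$, as in Lemma~\ref{prop:formal-slice-solution-extension}. This gives $M^{-1}\nabla M$ in exactly the form \eqref{eq:non-split-logarithmic-form}. The point to prove is that $M$ is an honest gauge over $A[[z]][z^{-1}]$, i.e.\ has bounded lowest $z$-order. The Remark following Theorem~\ref{thm:formal-subscheme-persistence} shows that this fails exactly when there is a nontrivial exponential drift such as $e^{-ps/z}$, and that drift has been removed by the twist.

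To get the bound, I would argue as follows. Both $\mathcal M'$ and the constant module $(A[[z]][z^{-1}]^m,\partial_z+C/z,\partial_v)$ are regular singular, integrable, and isomorphic over $F((z))$ by uniqueness of HLT regular factors. That uniqueness applies after comparing their restrictions, using the comparison lemma that horizontal morphisms between regular singular modules are meromorphic with bounded poles (Deligne/Kedlaya \cite[Prop.~2.2.12--2.2.15]{KedlayaGoodFormalI}). A horizontal isomorphism between two regular singular lattices is determined by its value at $v=0$ and has entries of bounded $z$-order. Hence $M$ lies in $\mathrm{GL}_m(A[[z]][z^{-1}])$.

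The consequences then follow immediately. The regular factor over $\overline F((z))$ and the one at $v=0$ are both given by the constant residue $C$, so they are isomorphic, and in particular $\Exp_\Lambda=\Exp_{v=0}$ is the multiset of classes of eigenvalues of $-C$.

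The main obstacle is the uniform pole bound in the fourth paragraph: passing from regular singularity over the generic field $\overline F((z))$ to a lattice defined over $A$. If the Hom-module argument is delicate, I would fall back on an explicit Sylvester-type recursion. After the twist the leading term $N$ is nilpotent, and integrability forces $\partial_{v_a}$ of the logarithmic residue to vanish modulo lower order, which gives the constancy of $C$ in $v$.
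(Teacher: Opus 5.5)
\paragraph{Verdict.} There is a genuine gap. Your first step (the twist in all directions, and the conclusion that $\mathcal E^{\Lambda/z}\otimes\mathcal M$ is regular singular over $\overline F((z))$) matches the paper, and so does your final paragraph. The gap is in the step you yourself flag as the main obstacle, and the argument you give for it is circular.

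\paragraph{The circular step.} You assert that $\mathcal E^{\Lambda/z}\otimes\mathcal M$ and the constant module $(\partial_z+C/z,\ \partial_{v_a})$ are isomorphic over $F((z))$ ``by uniqueness of HLT regular factors after comparing their restrictions.'' HLT uniqueness is a statement about one module over one field. Agreement of two families at $v=0$ says nothing about their generic fibres unless you already have a family version of the normal form, and that is exactly what is to be proved.

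\paragraph{Why the cited results do not help.} The results you cite from \cite{KedlayaGoodFormalI} (2.2.12--2.2.15) also live over a single differential field. A horizontal morphism with entries in $F((z))$ has bounded $z$-order trivially, because every element of $F((z))$ does. Your transport matrix, however, a priori lies in $\Mat_m\bigl(K((z))\llbracket v\rrbracket\bigr)$, not in $\Mat_m(F((z)))$. The Remark's $e^{-ps/z}$ is precisely an element of $K((z))\llbracket s\rrbracket$ lying outside $F((z))$. So nothing in your argument rules out unbounded pole order. Note also that the Remark does not say this failure happens ``exactly'' when there is an exponential drift; it only gives an example.

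\paragraph{Further unjustified steps.} The earlier step, regularity at $v=0$ via a cyclic vector, is not justified either. A cyclic vector over $F((z))$ and the coefficients of the resulting operator can have denominators in $v$, so they need not specialize to $v=0$. Regularity of the restriction is true, but it is a consequence of the normal form, not an input to it. The fallback fails as well. Since $N=U-\Lambda$ is nilpotent, the Sylvester operators $X\mapsto NX-XN$ are not invertible. A Moser/Babbitt--Varadarajan shearing reduction run uniformly over $A$ must cope with the Jordan type of $N(v)$, which can jump at $v=0$.

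\paragraph{The missing ingredient.} What is needed is the regularity theorem for integrable connections along a divisor, which the paper invokes directly. The twisted module is an integrable connection over $K\llbracket v_1,\dots,v_c,z\rrbracket[z^{-1}]=A[[z]][z^{-1}]$ whose only polar divisor is $z=0$, and it is regular at the generic point of that divisor. By \cite[Proposition~2.2.10 and Theorem~4.1.4]{KedlayaGoodFormalI}, it therefore admits the constant normal form $\nabla_z=\partial_z+C/z$, $\nabla_{v_a}=\partial_{v_a}$ with $C\in\Mat_m(K)$ over $A[[z]][z^{-1}]$. The crucial input is that integrability with no other polar divisor lets regularity at the generic point spread to a logarithmic lattice over the whole formal polydisc. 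Once you have this, the same $C$ represents the regular factor over $\overline F((z))$ and at $v=0$, and your concluding paragraph applies verbatim. The restrict-then-transport detour becomes unnecessary.
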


\begin{proof}
 By exponential type and $\det(T-U)=(T-\Lambda)^m$, the only
  exponential factor over $\overline F((z))$ is $\mathcal E^{-\Lambda/z}$.
  Thus the scalar twist is regular singular over this field.
  Its only polar divisor over $K\llbracket v_1,\ldots,v_c,z\rrbracket$
  is $z=0$. Consequently
  \cite[Proposition~2.2.10 and Theorem~4.1.4]{KedlayaGoodFormalI},
  specifically \textup{(c)}$\Rightarrow$\textup{(b)}, gives the constant normal form
  \eqref{eq:non-split-logarithmic-form}.
  The same matrix $C$ represents the regular factors over
  $\overline F((z))$ and at $v=0$, proving
  \eqref{eq:non-split-exponents}.
\end{proof}

\begin{cor}
  Suppose a relative summand of the quantum connection satisfies the
  hypotheses of Proposition~\ref{lem:non-split-unramified}, and let
  $\mathcal M_\Lambda$ be its extension to $\overline F((z))$.
  Define $\Exp^{\mathrm{cw}}_\Lambda$ by transporting an HLT solution
  basis in $z$ at $v=0$. Then
  \[
    \Exp^{\mathrm{cw}}_\Lambda
    =\Exp_{v=0}
    =\Exp_\Lambda
    \subset\CC/\mathbb Z.
  \]
\end{cor}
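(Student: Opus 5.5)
The plan is to combine the constant normal form of Proposition~\ref{lem:non-split-unramified} with the coefficientwise transport of Lemma~\ref{prop:formal-slice-solution-extension}. The equality $\Exp_{v=0}=\Exp_\Lambda$ is exactly \eqref{eq:non-split-exponents}, so the new content is the equality $\Exp^{\mathrm{cw}}_\Lambda=\Exp_{v=0}$ together with the fact that these classes lie in $\CC/\mathbb Z$.

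First, I apply Proposition~\ref{lem:non-split-unramified} to the relative summand. This gives a gauge $T\in\mathrm{GL}_m(A[[z]][z^{-1}])$ carrying $\mathcal E^{\Lambda/z}\otimes\mathcal M$ to the form $\partial_z+C/z$, $\nabla_{v_a}=\partial_{v_a}$, with $C\in\Mat_m(K)$. In this frame a fundamental solution is $z^{-C}$, and it is independent of $v$. Untwisting, a fundamental solution of the original summand is $Y=e^{-\Lambda(v)/z}\,T(v,z)\,z^{-C}$.

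Second, I compare $Y$ with transport. Restrict to $v=0$ to get $Y_0=e^{-\Lambda(0)/z}T(0,z)z^{-C}$. This is an HLT solution basis at $v=0$, and its exponent classes are the eigenvalue classes of $-C$, which form $\Exp_{v=0}$. Both $Y$ and the transport $MY_0$ of Lemma~\ref{prop:formal-slice-solution-extension} solve all the $\nabla_{v_a}$ and agree at $v=0$. By uniqueness of the formal solution of the $v$-system with given initial value, $Y=MY_0$.

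Third, I read off the coefficientwise classes. Expand $Y$ in $v$: $e^{-(\Lambda(v)-\Lambda(0))/z}$ and $T(v,z)$ contribute only integral powers of $z$ in each $v$-coefficient. Each $v$-coefficient is therefore $e^{-\Lambda(0)/z}$ times a Laurent series times $z^{-C}$, with the same fractional classes. Hence $\Exp^{\mathrm{cw}}_\Lambda$ is the multiset of classes of eigenvalues of $-C$, which is $\Exp_{v=0}$. This is independent of the chosen HLT basis at $v=0$, since any two such bases differ by a change of frame that alters exponents only by integers.

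Finally, I show the classes lie in $\CC/\mathbb Z$. This holds because $C$ has entries in $K$, and in the quantum setting $K$ can be taken so that the joint constants are $\CC$ by \eqref{eq:qdm-joint-constants} and the discussion after Theorem~\ref{thm:levelt-turrittin}. Alternatively, it follows directly from Kedlaya's Proposition~2.2.12 applied to $\Exp_\Lambda$.

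The main obstacle is the second step: making sure the gauge $T$ lies in $A[[z]][z^{-1}]$ with a uniform lower bound on the $z$-order, so that restriction to $v=0$ makes sense. The proposition's conclusion over $A[[z]][z^{-1}]$ gives exactly this, and it is the point at which the cautionary example in the preceding remark is avoided.
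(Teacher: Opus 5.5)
Your proposal is correct and follows the paper's argument: the paper's proof just combines Lemma~\ref{prop:formal-slice-solution-extension}, which gives $\Exp^{\mathrm{cw}}_\Lambda=\Exp_{v=0}$ because transport preserves coefficientwise exponent classes, with Proposition~\ref{lem:non-split-unramified}, which gives $\Exp_{v=0}=\Exp_\Lambda$. Your explicit identification $Y=MY_0$ through the normal form elaborates this but is not needed, and the inclusion in $\CC/\mathbb Z$ is best justified as in your alternative, by Kedlaya's Proposition~2.2.12 with the joint-constants condition, since $C$ having entries in $K$ does not by itself put its eigenvalues in $\CC$.
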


\begin{proof}
  This follows from Lemma~\ref{prop:formal-slice-solution-extension} and Proposition~\ref{lem:non-split-unramified}.
\end{proof}

\subsection{Blow-up decomposition}
\label{sec:blowup}

Let $\widetilde X$ be the symplectic blow-up of $X$ along a closed
symplectic submanifold $Z\subset X$ of complex codimension $r$, with
$r\geq2$.
We recall the blow-up theorem of Iritani and its symplectic extension. 
Let $\mathfrak q$ be the variable associated with a line in a fiber of the exceptional divisor. 
The superscript `$\mathrm{la}$' denotes certain extensions of the quantum
$D$-modules; see \cite{IritaniBlowups} for details.

\begin{thm}
  \label{thm:cai-blowup-decomposition}
  There is an invertible formal change of bulk variables
  \begin{equation}
    \widetilde\tau\longmapsto
    \bigl(\tau(\widetilde\tau),
    \varsigma_0(\widetilde\tau),\ldots,
    \varsigma_{r-2}(\widetilde\tau)\bigr)
  \end{equation}
  such that
  \begin{equation}
    (H_{\widetilde X},\star_{\widetilde\tau})
    \simeq
    (H_X,\star_{\tau(\widetilde\tau)})
    \oplus\bigoplus_{j=0}^{r-2}
    (H_Z,\star_{\varsigma_j(\widetilde\tau)}),
  \end{equation}
  and
  \begin{equation}
    C_{E_{\widetilde X}}
    \longleftrightarrow
    C_{E_X}\oplus\bigoplus_{j=0}^{r-2}C_{E_Z}.
    \label{eq:blowup-Euler-operator}
  \end{equation}
  Moreover, this decomposition lifts to an isomorphism
  \begin{equation}
    \Psi:\QDM(\widetilde X)^{\mathrm{la}}
    \xrightarrow{\sim}
    \tau^*\QDM(X)^{\mathrm{la}}
    \oplus\bigoplus_{j=0}^{r-2}
    \varsigma_j^*\QDM(Z)^{\mathrm{la}}
    \label{eq:blowup-qdm}
  \end{equation}
  intertwining the full quantum connections, including the
  $z$-direction.
\end{thm}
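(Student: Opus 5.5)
This theorem is Iritani's quantum blow-up formula, extended to the symplectic category as in \cite[Theorem~3.5]{CaiQuarticSymplecticIrrationality}. I would not try to reprove the decomposition from scratch. Instead, the plan is to reduce it to those two results and then check the extra claims we need: the Euler operator compatibility \eqref{eq:blowup-Euler-operator} and the use of the universal coefficient ring $R_X$ over the energy-completed Novikov ring.

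\textbf{Algebraic case.} When $X$ and $Z$ are smooth projective, \cite[Theorems~1.1 and~5.18]{IritaniBlowups} already supplies an isomorphism
\[
\QDM(\widetilde X)^{\mathrm{la}}\cong \tau^*\QDM(X)^{\mathrm{la}}\oplus\bigoplus_{j=0}^{r-2}\varsigma_j^*\QDM(Z)^{\mathrm{la}}.
\]
This isomorphism lives over a formal neighbourhood in the $\widetilde\tau$-variables, completed in $\mathfrak q^{-1}$, and it intertwines all connection components, including $\nabla_z$.

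The change of variables $\widetilde\tau\mapsto(\tau,\varsigma_0,\ldots,\varsigma_{r-2})$ is read off from the induced map of formal $F$-manifolds. Comparing dimensions, $\dim H_{\widetilde X}=\dim H_X+(r-1)\dim H_Z$. The map is invertible because its differential at the base point is the classical decomposition of $H_{\widetilde X}$, and the formal inverse function theorem then applies.

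The decomposition of products follows by restricting $\Psi$ to $z^{-1}$-leading order. There the bulk components $z\nabla_{\widetilde\tau^i}$ reduce to quantum multiplication, so $\Psi_0=\Psi|_{z=0}$ is an algebra isomorphism $(H_{\widetilde X},\star_{\widetilde\tau})\simeq\bigoplus(\ldots)$ on the factors.

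\textbf{Euler operator.} Compatibility of the $z$-components of the connection gives the $z^{-2}$ coefficient: $\Psi_0 C_{E_{\widetilde X}}\Psi_0^{-1}=C_{E_X}\oplus\bigoplus C_{E_Z}$. For this to hold, the change of variables must map the Euler field $E_{\widetilde X}$ to $E_X\oplus\bigoplus_j E_Z$, up to the grading shifts built into Iritani's conformal structure. I would read this off from homogeneity of $\Psi$. Concretely, $\Psi$ commutes with $z\partial_z+E$ because it intertwines the grading operators up to constant shifts, which are absorbed into the $\mu$ terms.

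\textbf{Symplectic case.} Here I would follow \cite[Theorem~3.5]{CaiQuarticSymplecticIrrationality}. Iritani's argument uses only the following inputs:
\begin{enumerate}
\item the decomposition of the blow-up as $\widetilde X=\widetilde{X\setminus Z}\cup$ a neighbourhood of the exceptional divisor;
\item the degeneration formula;
\item the fixed-point localization computation for the projective bundle $\PP(N_Z\oplus\OO)$;
\item the equivariant Fourier transform.
\end{enumerate}
Each of these has a symplectic counterpart: symplectic sum and degeneration via virtual techniques, together with localization for a Hamiltonian circle action on the local model. So the isomorphism carries over.

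\textbf{Main obstacle.} The main difficulty is convergence and completion, namely matching Iritani's analytic and $\mathrm{la}$-extended base with our formal base $R_X$. In particular, the Novikov variable of $\widetilde X$ splits into $Q$ and $\mathfrak q$, and the factors $\varsigma_j$ are expanded around $\mathfrak q=\infty$-type points.

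I would handle this by working formally throughout:
\begin{enumerate}
\item complete first in energy, then in the bulk variables, as in the definition of $R_X$;
\item check that every identity used is an identity of coefficients that is finite at each energy level;
\item pass to $k_X$, where only the spectral consequences \eqref{eq:blowup-Euler-operator} and the formal-atom decomposition of Proposition~\ref{prop:euler-spectral-block} are needed.
\end{enumerate}
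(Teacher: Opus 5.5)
This is essentially the paper's route: the paper proves the statement purely by citation, namely \cite[Theorem~5.18]{IritaniBlowups} in the projective case and \cite[Theorem~3.5]{CaiQuarticSymplecticIrrationality} in the symplectic case, and your reading of \eqref{eq:blowup-Euler-operator} off the $z^{-2}$ coefficient of the intertwined $z$-connection is the right consequence, using that $\Psi^{\pm1}$ are regular at $z=0$ \cite[Remark~1.5]{IritaniBlowups}. One side remark is wrong, though unused since invertibility and the algebra splitting are part of the cited theorem: the differential of the change of variables is not the classical additive decomposition of $H_{\widetilde X}$ (it is an isomorphism of quantum algebras, so it sends $1$ to $(1,1,\ldots,1)$, whereas classically the unit lies in the pulled-back $H_X$ summand), and it is this differential, not $\Psi|_{z=0}$ itself, that directly gives the algebra isomorphism---what $\Psi|_{z=0}$ gives is an intertwining of multiplication operators.
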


For smooth projective blow-ups, this follows from \cite[Theorem~5.18]{IritaniBlowups}. 
The symplectic statement is
\cite[Theorem~3.5]{CaiQuarticSymplecticIrrationality}.

\begin{prop}
  \label{prop:spectral-blowup}
  Pass to the algebraic closure of the fraction field of the common
  coefficient ring in Theorem~\ref{thm:cai-blowup-decomposition}.
  Suppose that \eqref{eq:blowup-Euler-operator} identifies the generalized $\widetilde{\Lambda}$-eigenspace of $C_{E_{\widetilde{X}}}$ with the generalized $\Lambda$-eigenspace of $C_{E_Y}$, where $Y=X$ or $Y=Z$.
  Then the formal atomic invariants of
  $\mathrm{At}(\widetilde X)_{\widetilde\Lambda}$ and
  $\mathrm{At}(Y)_\Lambda$ agree.
\end{prop}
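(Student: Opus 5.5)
The plan is to transport the formal decomposition of Proposition~\ref{prop:euler-spectral-block} across the isomorphism $\Psi$ of Theorem~\ref{thm:cai-blowup-decomposition}, and to check that each piece of the formal atomic invariants is intrinsic to the differential module with Euler operator. First fix the common coefficient ring $R$ (containing the pulled-back bulk and Novikov variables of $\widetilde X$, $X$ and $Z$, together with $\mathfrak q$ and the `$\mathrm{la}$' extension), and let $k$ be the algebraic closure of $\Frac(R)$. Since $\tau(\widetilde\tau)$ and $\varsigma_j(\widetilde\tau)$ are formal changes of variables, pulling back gives field embeddings $k_X,k_Z\hookrightarrow k$ and $k_{\widetilde X}\hookrightarrow k$. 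Generalized eigenspace dimensions, Jordan partitions, and HLT data are all stable under extension of an algebraically closed base field; for HLT this follows from the uniqueness of the decomposition in Theorem~\ref{thm:levelt-turrittin} together with the minimality of the ramification. Hence all three atoms may be computed after base change to $k((z))$.

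Next, the $z$-component of $\Psi$ intertwines $\nabla_z$ of $\widetilde X$ with the direct sum of the pulled-back $\nabla_z$'s. Reading off the leading $z^{-2}$ coefficients gives \eqref{eq:blowup-Euler-operator}, so $\Psi|_{z=0}$ (or rather its leading term in $z$) carries $V_{\widetilde\Lambda}$ onto the corresponding $V_\Lambda$ inside the summand $Y$. Consequently $m_{\widetilde\Lambda}=m_\Lambda$ and $\mathcal J_{\widetilde\Lambda}=\mathcal J_\Lambda$. The point to check here is that $\Psi$ may involve positive powers of $z$ (and possibly a shift of $\mu$). The argument is that conjugating $\nabla_z$ by $\Psi=\Psi_0+O(z)$ gives $\Psi_0 C_{E_{\widetilde X}}\Psi_0^{-1}=\bigoplus C_E$, provided $\Psi_0$ is invertible, which is part of the statement that \eqref{eq:blowup-qdm} is an isomorphism of lattices. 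If $\Psi$ only respects $k((z))$-structures, I would instead invoke \eqref{eq:blowup-Euler-operator} directly.

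For the HLT data, I would use the fact that the atom $\mathcal M_\Lambda$ is characterized intrinsically as the unique differential summand of $V((z))$ whose leading Euler operator has spectrum $\{\Lambda\}$. Uniqueness comes from the off-diagonal Sylvester argument in the proof of Proposition~\ref{prop:euler-spectral-block}: any morphism between summands with disjoint spectra vanishes. Thus $\Psi$ restricts to an isomorphism of differential modules $\mathrm{At}(\widetilde X)_{\widetilde\Lambda}\otimes k((z))\simeq \mathrm{At}(Y)_\Lambda\otimes k((z))$, compatible with all connection components. By the uniqueness part of HLT, the exponential factors $\varphi$ and the classes $[R_\varphi]$ coincide, and therefore so do the exponent classes. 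These lie in $\CC/\mathbb Z$ by \eqref{eq:qdm-joint-constants} applied on each side.

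The main obstacle is the bookkeeping for the `$\mathrm{la}$' extension and the $\mathfrak q$-dependence. I need that passing to $\QDM^{\mathrm{la}}$ and to the enlarged field does not alter the HLT classes, and that the joint-constants condition still holds, so that the exponents remain in $\CC/\mathbb Z$ rather than in $k/\mathbb Z$. I would first try to argue that the extension is a flat base change of coefficient rings along which the derivations extend. With that in hand, Kedlaya's intrinsic characterization of $\Exp$ (cited earlier) applies verbatim.
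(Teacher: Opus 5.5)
Your proposal is correct and follows essentially the same route as the paper. It reads off $m_\Lambda$ and $\mathcal J_\Lambda$ from \eqref{eq:blowup-Euler-operator}, uses the Sylvester argument of Proposition~\ref{prop:euler-spectral-block} to match the corresponding spectral summands under $\Psi$, and concludes by uniqueness in Theorem~\ref{thm:levelt-turrittin}. The main input, however, is the Laurent bound $\Psi^{\pm1}\in B[[z]]$. The paper takes this from \cite[Remark~1.5 and Theorem~5.18]{IritaniBlowups}, and from \cite[Theorem~3.5]{CaiQuarticSymplecticIrrationality} in the symplectic case; it is what makes $\Psi$ a gauge over $\overline{\Frac(B)}((z))$ at all, so you should cite it rather than treat it as automatic. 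By contrast, the `$\mathrm{la}$'/joint-constants bookkeeping you flag as the main obstacle is not a real issue. Each side's exponent classes already lie in $\CC/\mathbb Z$ over its own field, and they are unchanged by base change.
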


\begin{proof}
  Equation~\eqref{eq:blowup-Euler-operator} identifies the Euler
  operators on the corresponding generalized eigenspaces, preserving
  $m_\Lambda$ and $\mathcal J_\Lambda$.

  For the connection data, the essential point is the Laurent bound
  in $z$. By \cite[Remark~1.5 and Theorem~5.18]{IritaniBlowups},
  $\Psi$ and $\Psi^{-1}$ lie in a graded ring of the form $B[[z]]$.
  They therefore give inverse meromorphic gauges over
  $\overline{\Frac(B)}((z))$. The symplectic comparison has the same
  property by
  \cite[Theorem~3.5]{CaiQuarticSymplecticIrrationality}.
  The formal decomposition of
  Proposition~\ref{prop:euler-spectral-block} then identifies the
  corresponding $z$-connection summands.
  After a common ramification, uniqueness in
  Theorem~\ref{thm:levelt-turrittin} identifies, up to permutation,
  \[
    \varphi_{\widetilde\Lambda,a}^{\widetilde X}
    =
    \varphi_{\Lambda,a}^{Y},
    \qquad
    R_{\widetilde\Lambda,a}^{\widetilde X}
    \simeq
    R_{\Lambda,a}^{Y}.
  \]
  Hence the regular factors and their exponent multisets in
  $\CC/\mathbb Z$ are preserved. 
\end{proof}

\section{Low-dimensional blow-up centers}
\label{sec:obstruction}

In this section we describe the atoms of symplectic manifolds in complex dimensions 1 and 2.

\subsection{Curve atoms}
\begin{prop}
  \label{prop:complete-curve-atom-list}
  The formal atomic invariants of curves are listed below:
  \[
    \begin{array}{c|c|c|c}
      g & m & \mathcal J & \Exp\\
      \hline
      0 & 1+1 & (1)\oplus(1) & \{0\},\{0\}\\[1mm]
      1 & 2 & (1,1) & \{\tfrac12,\tfrac12\}\\[1mm]
      \geq2 & 2 & (2) & \{\tfrac12,\tfrac12\}
    \end{array}
  \]
\end{prop}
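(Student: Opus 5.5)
The plan is to compute the big quantum connection of a curve $C$ of genus $g$ directly; the rank $\dim H_C=2$ makes this elementary. We then read off $(m_\Lambda,\mathcal J_\Lambda,\Exp_\Lambda)$ from the $z$-connection \eqref{eq:quantum-z-connection}. Symplectically, a closed surface with a symplectic form is determined up to deformation by its genus, and deformation does not change Gromov--Witten invariants. So we may take $C$ to be a smooth projective curve. Take the basis $\phi_0=1$, $\phi_1=\mathrm{pt}$ with bulk variables $\tau^0,\tau^1$. Then
\[
  \mu_C=\diag(-\tfrac12,\tfrac12),\qquad E_C=c_1(C)+\tau^0\cdot 1=(2-2g)\,\mathrm{pt}+\tau^0 ,
\]
and the coefficient of $\tau^1$ vanishes because $\deg\mathrm{pt}=2$.

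\emph{Case $g\geq1$.} There are no nonconstant genus-zero stable maps to $C$. Hence every Gromov--Witten invariant with $d\neq0$ vanishes, and the big quantum product is the cup product for all $\tau$. Then $C_E=\tau^0\id+N$ with $N=(2-2g)\,\mathrm{pt}\cup$. So there is a single eigenvalue $\Lambda=\tau^0$ with $m_\Lambda=2$. The Jordan partition is $(1,1)$ if $g=1$ (where $N=0$) and $(2)$ if $g\geq2$ (where $N\neq0$ is nilpotent of rank one). Twisting by $\mathcal E^{\tau^0/z}$ in all directions leaves $\partial_z-N/z^2+\mu_C/z$ with constant coefficients.
\begin{itemize}
  \item For $g=1$ this connection is already logarithmic with residue $\mu_C$. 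The exponents are $\mp\tfrac12$, so $\Exp=\{\tfrac12,\tfrac12\}$ in $\CC/\mathbb Z$.
  \item For $g\geq2$ we apply the shearing gauge $\phi_1\mapsto z\,\phi_1$. Since $N\phi_0=(2-2g)\phi_1$, this removes the $z^{-2}$ term and produces a logarithmic connection. Its residue is triangular with diagonal entries in $\tfrac12+\mathbb Z$.
\end{itemize}
Thus in the case $g\geq2$ the module is regular singular, hence of unramified exponential type with the single factor $-\Lambda/z$, and again $\Exp=\{\tfrac12,\tfrac12\}$. Since the normal form has constant coefficients, we do not need Theorem~\ref{thm:formal-subscheme-persistence} here.

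\emph{Case $g=0$.} Here $C=\PP^1$ and the small product is $p\star p=Q$, so $C_E|_{\tau=0}=2p\star$ has eigenvalues $\pm2\sqrt Q$. These are distinct, giving two rank-one atoms with $\mathcal J=(1)$. To pass to the big quantum connection we use Theorem~\ref{thm:formal-subscheme-persistence} with $S$ the small locus $\tau=0$. The spanning condition \eqref{eq:formal-subscheme-spanning} holds with $E^0=e$ and $E^1=E$, because $1$ and $2p$ span $H_{\PP^1}$. This transports the multiplicities and Jordan types. For a rank-one atom, the HLT factor is $\mathcal E^{-\Lambda/z}\otimes\mathcal K_\alpha$, so the coefficientwise and HLT exponents coincide; alternatively one can invoke Proposition~\ref{lem:non-split-unramified}. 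To compute $\alpha$ we write $\mu_C$ in the eigenbasis $v_\pm=1\pm p/\sqrt Q$. We find $\mu_C v_\pm=-\tfrac12 v_\mp$, so the diagonal entries vanish. The diagonal entry of the $z^{-1}$ coefficient in a diagonalizing frame gives the exponent to first order, and no further correction arises in rank one. Hence $\Exp=\{0\}$ for each atom.

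The main obstacle is the genus-zero exponent computation. One must justify that the regular exponent of each rank-one summand equals the diagonal of $\mu$ in the normalized idempotent frame, i.e.\ track the first gauge step of Proposition~\ref{prop:euler-spectral-block}. One must also check that this survives passage from the small to the big locus. We plan to handle the latter via Theorem~\ref{thm:formal-subscheme-persistence} together with the observation that for rank one the coefficientwise and HLT exponent classes agree.
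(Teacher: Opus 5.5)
Your proposal is correct, and it is close to the paper's proof. For $g\ge1$ it is the same argument. The nonconstant genus-zero invariants vanish (the paper phrases this as $\pi_2(\Sigma_g)=0$), so $C_E=\tau^0I+(2-2g)N$, and the shearing $\diag(1,z^{-1})$ gives residue $-\tfrac12 I-(2-2g)N$. Your separate $g=1$ shortcut, reading the exponents $\mp\tfrac12$ directly off $\mu_C$, is equivalent.

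For $g=0$ the routes differ.

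\textbf{The paper's route.} It works directly on the big locus. By the string and divisor equations, the only quantum correction for $\PP^1$ is $p\star p=Qe^{\tau^1}$. Absorbing $e^{\tau^1}$ into $Q$ gives the eigenvalues $\tau^0\pm2\sqrt Q$ over $k_X$, with no transport needed. The paper then simply asserts that each exponent class is $0$.

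\textbf{Your route.} You compute at $\tau=0$ and pass to the big locus via Theorem~\ref{thm:formal-subscheme-persistence}, using spanning by $e,E$. You then need the rank-one coincidence of coefficientwise and HLT exponents. This is valid, but it is heavier machinery than necessary, since the big product of $\PP^1$ is explicit.

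\textbf{What your route adds.} Your exponent computation supplies the justification the paper omits for $\Exp=\{0\}$. It has three parts:
\begin{enumerate}
\item $\mu_C$ has zero diagonal in the eigenframe $v_\pm=1\pm p/\sqrt Q$.
\item The first gauge step of Proposition~\ref{prop:euler-spectral-block} changes the $z^{-1}$ coefficient only by $[G_1,C_E]$, which is off-diagonal in that frame.
\item In rank one, the holomorphic terms are removed by a scalar gauge in $k[[z]]^\times$ without changing the residue.
\end{enumerate}
Your flagged ``main obstacle'' is fully resolved by these three points. The computation also works verbatim on the big locus, with $Q$ replaced by $Qe^{\tau^1}$ and the $z$-independent frame $v_\pm=1\pm p/\sqrt{Qe^{\tau^1}}$. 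Combining it with the paper's direct big-locus computation therefore gives the shortest complete argument, and the persistence theorem can be dropped.
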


\begin{proof}
Let $\Sigma_g$ be a smooth connected curve of genus $g$.
Let $1\in H^0(\Sigma_g)$ be the unit and $p=[\mathrm{pt}]\in H^2(\Sigma_g)$ the point class. 
Let $Q$ be the Novikov variable with the divisor variable absorbed. 
For $\PP^1$, one has $p\star p=Q$, so generic Euler
multiplication has the two distinct eigenvalues
$\tau^0\pm2\sqrt{Q}$.  
Each atom has the exponent class zero.
For $g\geq1$, $\pi_2(\Sigma_g)=0$, hence
$p\star p=0$ and
\begin{equation*}
  C_E=\tau^0I+(2-2g)N,
  \qquad
  N=\begin{pmatrix}0&0\\1&0\end{pmatrix}.
\end{equation*}
After removing $\mathcal E^{-\tau^0/z}$, the shearing
$\diag(1,z^{-1})$ gives
\begin{equation*}
  \partial_z+\frac1z\left(-\frac12I-(2-2g)N\right).
  \label{eq:curve-regular-connection}
\end{equation*}
This has residue $-\frac12I-(2-2g)N$, so the two formal solution exponent classes are $1/2$.
\end{proof}

\subsection{Surface atoms}
\label{subsec:surface-atoms}

By a symplectic surface, we mean a closed symplectic manifold of complex dimension 2.
\begin{thm}[\cite{LiSympSurf,LiBettiNum,ZhangConeThm}]
    The following holds for symplectic surfaces:
	\begin{enumerate}
		\item Every symplectic surface is symplectically birational to a minimal one (see \cite{LiSympSurf} for the definition of minimal).
		\item Let $(S,\omega)$ be a minimal symplectic surface. Write $K_\omega=-c_1(TS)$. Define its Kodaira dimension by \[\kappa(S):=\begin{cases}
			-\infty & \textrm{if } K_\omega\cdot[\omega]<0 \textrm{ or } K_\omega^2<0,\\
			0 & \textrm{if } K_\omega\cdot[\omega]=0 \textrm{ and } K_\omega^2=0,\\
			1 & \textrm{if } K_\omega\cdot[\omega]>0 \textrm{ and } K_\omega^2=0,\\
			2 & \textrm{if } K_\omega\cdot[\omega]>0 \textrm{ and } K_\omega^2>0.
		\end{cases}\] Define the Kodaira dimension of a symplectic surface to be the Kodaira dimension of any of its minimal models. Then the Kodaira dimension is a well-defined symplectic birational invariant.
		\item Any symplectic surface with $\kappa=-\infty$ is symplectically birational to $\PP^1\times C$ for some curve $C$, and is necessarily algebraic.
		\item A minimal symplectic surface $S$ with $\kappa(S)\ge0$ is $K_\omega$-nef, i.e., $K_\omega\cdot A\ge0$ for any class $A$ representing a $J$-holomorphic curve for generic compatible $J$.
		\item A minimal symplectic surface $S$ with $\kappa(S)=0$ has torsion $K_S$. It satisfies $b_2(S)\le22$, with equality realized by the K3 surface.
	\end{enumerate}
\end{thm}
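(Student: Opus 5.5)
This theorem collects known results from the literature rather than proving something new, so the plan is to assemble it from the structure theory of symplectic four-manifolds, with Seiberg--Witten theory as the main input. For (1), I would use that a symplectic surface containing a symplectically embedded sphere $E$ with $E\cdot E=-1$ admits a symplectic blow-down along $E$, which lowers $b_2$ by one. Since $b_2\geq1$ for a closed symplectic four-manifold ($[\omega]^2\ne0$), iterated blow-down must stop. The stopping point is minimal in the sense of \cite{LiSympSurf}: no such exceptional sphere exists.

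For (2), the key is a sign dichotomy for minimal models.
\begin{itemize}
\item If $K_\omega\cdot[\omega]<0$ or $K_\omega^2<0$, then McDuff's classification of rational and ruled symplectic manifolds, together with Liu's and Ohta--Ono's results, identifies the minimal model as $\CC\PP^2$ or an $S^2$-bundle over a surface.
\item Otherwise Taubes' ``SW $=$ Gr'' shows that $K_\omega$ is represented by a $J$-holomorphic curve, so $K_\omega\cdot[\omega]\geq0$. Together with minimality this gives $K_\omega^2\geq0$.
\end{itemize}
Hence the four cases in the definition of $\kappa$ are exhaustive and mutually exclusive.

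Well-definedness then has two parts.
\begin{itemize}
\item \emph{Uniqueness of minimal models when $\kappa\geq0$.} Here the minimal model is unique up to symplectomorphism, because all $(-1)$-spheres are disjoint; this follows from the SW adjunction argument.
\item \emph{The case $\kappa=-\infty$.} Different minimal models are all rational or ruled, and both conditions defining $-\infty$ are preserved.
\end{itemize}
Invariance under the equivalence relation generated by blow-ups, blow-downs and deformations then follows. Deformation invariance of the minimal model's $K_\omega$ and $[\omega]$-signs uses that SW basic classes, and hence the Taubes curve representing $K_\omega$, are deformation invariant.

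Items (3)--(5) follow from the same inputs.
\begin{itemize}
\item For (3), I would cite McDuff and Liu: every $\kappa=-\infty$ surface is rational or ruled, hence symplectically birational to $\PP^1\times C$, and therefore algebraic.
\item For (4), I would use Taubes' theorem that for minimal $S$ with $b^+\ge2$ (or suitable chambers when $b^+=1$), $K_\omega$ is $J$-effective for generic $J$. The positivity of intersections with the irreducible components of a $J$-curve $A$, combined with adjunction excluding negative self-intersection spheres, then gives $K_\omega\cdot A\geq0$; this is Zhang's cone theorem \cite{ZhangConeThm}.
\item For (5), $\kappa=0$ means $K_\omega\cdot[\omega]=0$ and $K_\omega^2=0$. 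Taubes' effectivity then forces the Taubes curve to be empty, so $K_\omega$ is torsion. The bound $b_2\leq22$ is \cite{LiBettiNum}, proved from the resulting signature and Euler-characteristic constraints via Bauer's and Li's SW arguments, with equality realized by K3.
\end{itemize}

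The main obstacle is the $b^+=1$ case, where SW invariants depend on chambers. I would follow Li--Liu's wall-crossing analysis, exactly as used in \cite{LiSympSurf,LiBettiNum}. Since the paper only needs these facts as black boxes to enumerate surface atoms, I would simply cite them.
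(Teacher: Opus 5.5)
Your plan matches the paper's: it gives no argument of its own and simply cites \cite[Section~2]{LiSympSurf}, \cite[Theorem~1.1]{LiBettiNum} and \cite[Lemma~2.1]{ZhangConeThm}, the last only for generic compatible $J$, so your McDuff--Liu--Taubes--Li--Liu sketches are background the paper does not reproduce. If you wanted the sketch to stand on its own, exhaustiveness in (2) also has to exclude $K_\omega\cdot[\omega]=0<K_\omega^2$ (via the empty Taubes curve when $b^+\ge2$, and the light-cone lemma when $b^+=1$), and for $b^+=1$ the light-cone lemma plus adjunction already gives (4) and the torsion claim in (5) with no chamber analysis.
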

\begin{proof}
	This follows from \cite[Section 2]{LiSympSurf}, \cite[Theorem 1.1]{LiBettiNum} and \cite[Lemma 2.1]{ZhangConeThm}; note that we only need the statement of \cite[Lemma 2.1]{ZhangConeThm} for generic compatible $J$.
\end{proof}

\begin{rem}
	Symplectic surfaces with $\kappa\ge0$ can be non-algebraic \cite{GompfSympSum}.
\end{rem}

\begin{prop}
  \label{prop:surface-atom-list}
  The atoms of minimal symplectic surfaces are given in
  Table~\ref{tab:projective-surface-invariants}. A point blow-up adds one
  atom $(1,(1),\{0\})$.
\end{prop}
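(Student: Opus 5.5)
The plan is a case analysis over the minimal models from the classification theorem above, followed by a blow-up argument. For each minimal symplectic surface $S$ we compute the small quantum product. We then show that the relevant atomic invariants persist to the big quantum connection. For this we use the Euler-flow persistence of Theorem~\ref{thm:formal-subscheme-persistence} where it applies, and otherwise Proposition~\ref{lem:non-split-unramified}. Finally, the blow-up statement comes from Theorem~\ref{thm:cai-blowup-decomposition} with $r=2$ and $Z$ a point. That theorem gives $\mathrm{At}(\mathrm{Bl}_{pt}S)=\mathrm{At}(S)\sqcup\mathrm{At}(pt)$. The quantum $D$-module of a point is $\mathcal E^{-\tau^0/z}$ on a rank-one space with $\mu=0$, so its atom is $(1,(1),\{0\})$. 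Proposition~\ref{prop:spectral-blowup} transports this atom unchanged.

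\textbf{Case $\kappa\ge0$.} By item (4) of the classification theorem, $K_\omega$ is nef on $J$-curves for generic $J$. The virtual dimension of the moduli of degree $d\neq0$ curves with $n$ marked points is $2+c_1\cdot d+n-3+\ldots$, where $c_1\cdot d\le 0$. A dimension count then shows that only constant maps, or curves with $c_1\cdot d=0$, can contribute to three-point invariants with insertions of positive degree.

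If $\kappa\ge1$ (or $\kappa=0$ with $c_1$ torsion), the grading forces the big product to be the classical cup product deformed only along directions of degree $\le2$. Concretely, $C_E=\tau^0 I+N$, where $N$ is a nilpotent operator built from $c_1$ and the bulk terms. For $\kappa=0$ we have $c_1=0$ rationally, so $N$ vanishes modulo terms that have grading-forced zero eigenvalues. This should give a single atom of multiplicity $b_{\mathrm{ev}}(S)$ with partition $(1^{b_{\mathrm{ev}}})$. For $\kappa\ge1$ we get Jordan blocks coming from $K\cup$: sizes $(3,1^{b_2-1})$ when $K^2>0$, and $(2,2,1^{b_2-2})$ when $K^2=0$ with $K\ne0$. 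The exponents then follow from a shearing $\diag(1,z^{-1},z^{-2})$ by degree, exactly as in the curve case of Proposition~\ref{prop:complete-curve-atom-list}. The residue becomes $-\mu$ shifted by integers, so all exponent classes are $0$ in $\CC/\mathbb Z$, since in dimension $2$ the eigenvalues of $\mu$ are integers. The main subtlety here is justifying that big quantum corrections do not change the Jordan type. Since the eigenvalue is unique, I would apply Proposition~\ref{lem:non-split-unramified} for the exponents. For the Jordan type I would use the degree/nilpotency argument directly, because $C_E-\tau^0$ raises cohomological degree at $\tau$ of positive degree.

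\textbf{Case $\kappa=-\infty$.} The surfaces are $\PP^2$ and ruled surfaces $\PP^1\times\Sigma_g$ (up to blow-ups handled above). For $\PP^2$ there are three semisimple atoms $(1,(1),\{0\})$, since the small $C_E$ has eigenvalues $3\zeta Q^{1/3}$ and Euler flow spans. For $\PP^1\times\Sigma_g$, the Künneth formula for Gromov--Witten invariants (fiber classes only, since $\pi_2(\Sigma_g)=0$ for $g\ge1$) gives $\QDM(\PP^1)\otimes\QDM(\Sigma_g)$. The atoms are therefore tensor products of the $\PP^1$ atoms (eigenvalues $\pm2\sqrt Q$) with the curve atom of $\Sigma_g$. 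This yields two atoms, each with $m=2+2g\cdot[\ldots]$ restricted to even cohomology, i.e. $m=2$, with partitions $(1,1)$ or $(2)$ and exponent classes $\{\tfrac12,\tfrac12\}$ shifted by $0$, matching the curve table.

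I expect the main obstacle to be the $\kappa\ge0$ case in the symplectic (non-Kähler) setting. There, the vanishing of quantum corrections must be derived purely from $K$-nefness for generic $J$ and from virtual dimension, and it must be shown that classes with $K\cdot d=0$ (for example multiple fibers in elliptic surfaces) do not alter the Jordan partition. I would first try a grading argument: every bulk or Novikov term in $C_E-\tau^0$ has strictly positive degree shift. This would make it strictly lower-triangular with respect to the degree filtration, so that its rank pattern is read off from the leading classical term $c_1\cup$.
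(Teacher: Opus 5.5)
\textbf{There is an error in your $\kappa\ge0$ case.} Your blow-up step and overall case structure match the paper, but for $\kappa=0$ you conclude the atom has Jordan partition $(1^{b_2+2})$. The correct partition, as in Table~\ref{tab:projective-surface-invariants}, is $(2,1^{b_2})$.

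Atoms are defined over $\overline{\Frac(R_S)}$, where the bulk variable $\tau^1$ dual to $[\mathrm{pt}]$ is free. The Euler element is $E_S=\tau^0 1+c_1(S)-\tau^1[\mathrm{pt}]$, since the coefficient is $1-\deg/2=-1$. Hence $C_E=\tau^0I+N$ with $N=A-\tau^1B$, where $A=c_1\cup$ and $B=[\mathrm{pt}]\cup$. For $\kappa=0$ one has $A=0$, but $-\tau^1B$ sends $1\mapsto-\tau^1[\mathrm{pt}]$. So $\rk N=1$, which gives a $2$-block; a zero eigenvalue does not mean a zero nilpotent part.

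The same example refutes your proposed grading argument. The rank pattern of a degree-raising operator is \emph{not} read off from its lowest-shift term $c_1\cup$: the shift-$4$ term $-\tau^1B$ raises $\rk N$ when $c_1=0$. For $\kappa\ge1$ your partitions come out right only because $-\tau^1B$ happens not to change $(\rk N,\rk N^2)=(2,0)$, respectively $(2,1)$. This has to be checked directly.

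The fix is simpler than your plan. Sharpen the dimension count to show there are no genus-zero invariants with $d\neq0$ at all:
\begin{itemize}
\item a unit insertion kills them by the fundamental class axiom;
\item otherwise every insertion has complex degree $\ge1$, while $\mathrm{vdim}_\CC=n-1+c_1\cdot d\le n-1$.
\end{itemize}
So classes with $K\cdot d=0$, such as elliptic fibres, are harmless, and $\star$ is the cup product for every $\tau$. Then the gauge $Y=e^{-\tau^0/z}z^{-\mu_S}e^{-\tau^1zB}Z$ turns the connection into $\partial_zZ=\frac{A}{z}Z$. This exhibits the distinction your sketch misses: the Jordan type is governed by $A-\tau^1B$, but the regular module by $A$ alone. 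That is exactly why the $\kappa=0$ atom has $\mathcal J=(2,1^{b_2})$ yet trivial connection $\mathbf 1^{\oplus m_S}$, as in \eqref{eq:surface-connection-types}. No appeal to Proposition~\ref{lem:non-split-unramified} is needed. Using it would in any case require unramified exponential type as an input, which you do not establish.

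\textbf{Your $\kappa=-\infty$ case is incomplete at the level of the big quantum connection.} The identification with $\QDM(\PP^1)\otimes\QDM(\Sigma_g)$ holds only on the locus $\tau_{\PP^1}\otimes 1+1\otimes\tau_{\Sigma_g}$. This locus misses the point-class direction $\alpha=\eta\cup h$.

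Reaching the full parameter space via Theorem~\ref{thm:formal-subscheme-persistence} does not work in general. For $g=1$ the Euler powers on that locus are $(\tau^0+2h)^k\in\Span\{1,h\}$, so the spanning hypothesis fails. Even where it holds, the theorem only controls $\Exp^{\mathrm{cw}}$, not the HLT exponents.

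The paper instead uses two ingredients:
\begin{itemize}
\item the Iritani--Koto projective-bundle decomposition of $C\times\PP^1=\PP_C(\OO_C^{\oplus2})$, valid over the whole big parameter space;
\item the explicit big relations $\eta\star\eta=0$ and $\widetilde h\star\widetilde h=q$ for $\widetilde h=h-\frac{\tau^1}{2}\alpha$, which give the Euler blocks $(\tau^0\pm2\sqrt q)I+(2-2g)J_2(0)$.
\end{itemize}

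Finally, ``up to blow-ups'' does not cover the minimal non-trivial $S^2$-bundles over $\Sigma_g$ with $g\ge1$, which are not blow-ups of $\Sigma_g\times\PP^1$. You need the paper's observation that both have $b_2=2$. A common resolution then uses the same number of point blow-ups on each side, and the extra point atoms cancel.
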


\begin{table}[H]
  \centering
  \small
  \setlength{\tabcolsep}{3pt}
  \renewcommand{\arraystretch}{1.16}
  \caption{Atoms of minimal symplectic surfaces.
  The invariants listed refer to each atom.}
  \label{tab:projective-surface-invariants}
  \begin{tabularx}{\textwidth}{@{}c>{\raggedright\arraybackslash}Xcccc@{}}
    \toprule
    $\kappa(S)$ & $S$ & Atoms & $m_\Lambda$
    & $\mathcal J_\Lambda$ & $\Exp_\Lambda$\\
    \midrule
    $-\infty$ & $\PP^2$ & $3$ & $1$ & $(1)$ & $\{0\}$\\
    $-\infty$ & $\mathbb \PP^1\times\PP^1$
    & $4$ & $1$ & $(1)$ & $\{0\}$\\
    $-\infty$ & Ruled over a curve of genus $1$
    & $2$ & $2$ & $(1,1)$ & $\{(1/2)^2\}$\\
    $-\infty$ & Ruled over a curve of genus $g\geq2$
    & $2$ & $2$ & $(2)$ & $\{(1/2)^2\}$\\
    \midrule
    $0$ & Symplectic Calabi--Yau ($b_2\le22$) & $1$ & $b_2+2$ & $(2,1^{b_2})$ & $\{0^{b_2+2}\}$\\
    $1$ & Minimal symplectic surface with $\kappa=1$ & $1$ & $b_2+2$
    & $(2,2,1^{b_2-2})$ & $\{0^{b_2+2}\}$\\
    $2$ & Minimal symplectic surface with $\kappa=2$ & $1$ & $b_2+2$
    & $(3,1^{b_2-1})$ & $\{0^{b_2+2}\}$\\
    \bottomrule
  \end{tabularx}
\end{table}

For a symplectic surface $S$, we write $\tau^0$ for the variable dual to the unit, and $\tau^1$ for the variable dual to $[\mathrm{pt}]$. 
The Euler element is \[E_S=\tau^01+c_1(S)-\tau^1[\mathrm{pt}].\]

\begin{proof}
  Every symplectic surface is obtained from a minimal model by point
  blow-ups.  A point contributes the connection
  $\partial_z-\tau^0z^{-2}$, so Proposition~\ref{prop:spectral-blowup} gives the
  additional atom $(1,(1),\{0\})$. 
  
 The three rank-one atoms of $\PP^2$ have exponent zero.
Applying \cite[Theorem~1.7]{IritaniKotoProjectiveBundles} to
$\PP^1\times\PP^1=\PP_{\PP^1}(\OO_{\PP^1}^{\oplus2})$
gives two copies of the $\PP^1$ quantum connection.
Hence there are four rank-one atoms, each with exponent class zero.

  A ruled surface over a curve $C$ of genus $g \geq 1$ is birational to $C\times\PP^1$; both have $b_2=2$,
  so their common resolution adds the same number of point atoms.  On $C\times\PP^1$, let $\eta$ and $h$ be the pull-backs of the point classes of $C$ and
  $\PP^1$, respectively, and write $\alpha=\eta\cup h$. Let $q$ be the Novikov variable with the divisor variables absorbed. Introduce $\widetilde h=h-\frac{\tau^1}{2}\alpha$. The big quantum product satisfies
  \[
    \eta\star\eta=0,\qquad
    \widetilde h\star\widetilde h=q.
  \] 
  The Euler vector field is \[E=\tau^01+(2-2g)\eta+2\widetilde h.\]
  The two Euler blocks are therefore
  $(\tau^0\pm2\sqrt q)I+(2-2g)J_2(0)$.
  For the quantum connection, we apply the projective-bundle decomposition of \cite{IritaniKotoProjectiveBundles} to
  $C\times\PP^1=\PP_C(\OO_C^{\oplus2})$; its dual bundle is globally
  generated, as required in \cite[Theorems~1.7 and~5.1]{IritaniKotoProjectiveBundles}.
 Each connection summand is identified with the curve connection after the
  invertible change of bulk variables and a meromorphic change of frame.
  Thus Proposition~\ref{prop:complete-curve-atom-list} gives exponent class
  $1/2$ on both branches. This gives the two ruled rows.

  For $\kappa(S)\geq0$, since $K_\omega$ is nef with respect to generic compatible almost complex structures, $S$ has no nontrivial genus 0 Gromov--Witten invariants. Write $A=c_1(S)\cup-$, $B=[\mathrm{pt}]\cup-$, and $N=A-\tau^1B$.  Then
  \[
    C_E=\tau^0I+N,\qquad N^3=0,\qquad
    (\rk N,\rk N^2)=
    \begin{cases}
      (1,0),&\kappa(S)=0,\\
      (2,0),&\kappa(S)=1,\\
      (2,1),&\kappa(S)=2.
    \end{cases}
  \]
  These ranks give the remaining Jordan partitions. Since $[\mu_S,A]=A$,
  $[\mu_S,B]=2B$, and $AB=BA=B^2=0$, the substitution $Y=e^{-\tau^0/z}z^{-\mu_S}e^{-\tau^1zB}Z$ gives
  \begin{equation}
    \label{eq:surface-regular-gauge}
    \partial_zZ-\frac AzZ=0
  \end{equation}
  with the fundamental solution $Z=z^A$.
  After removing the scalar exponential, the gauge and its inverse are
  meromorphic, since $\mu_S$ has integral eigenvalues. Thus the Euler
  partition is determined by $A-\tau^1B$, whereas the regular module is determined
  by $A$. In particular, the latter is trivial for $\kappa(S)=0$, when
  $A=0$. Since $A$ is nilpotent, every exponent class is zero.
\end{proof}

\begin{rem}
    Similar computations in the algebraic case appear in \cite{GyengeSzaboBlowups}.
\end{rem}

After extending the coefficients to an algebraically closed field $k$, write 
$\mathcal U_j=\left(k\llaur z\rlaur ^j,\partial_z-J_j(0)/z\right)$,
$\mathcal U_1=\mathbf1=(k\llaur z\rlaur ,\partial_z)$, and
$\mathcal E^{\tau^0/z}=(k\llaur z\rlaur ,\partial_z+\tau^0/z^2)$.
For $\kappa(S) \geq 0$, let $\mathcal{M}_S$ denote the unique atom of $S$ and set $m_S=b_2(S)+2$. By \eqref{eq:surface-regular-gauge}, we have the following invariants 
\begin{equation}
  \label{eq:surface-connection-types}
  \begin{array}{c|c|c}
    \kappa(S)&\mathcal J_S&\mathcal E^{\tau^0/z}\otimes\mathcal M_S\\
    \hline
    0&(2,1^{m_S-2})&\mathbf1^{\oplus m_S}\\
    1&(2,2,1^{m_S-4})&\mathcal U_2^{\oplus2}\oplus
    \mathbf1^{\oplus(m_S-4)}\\
    2&(3,1^{m_S-3})&\mathcal U_3\oplus\mathbf1^{\oplus(m_S-3)}
  \end{array}.
\end{equation}
Since $\mathcal U_j$ is indecomposable for $j>1$, the maximal
rank of a trivial differential direct summand is $m_S-4$ for
$\kappa(S)=1$ and $m_S-3$ for $\kappa(S)=2$.

\section{Fano weighted complete intersection fourfolds}
\label{sec:ci-fourfolds}

Let $X=X_{d_1,\ldots,d_c}\subset\PP(a_0,\ldots,a_{4+c})$ be a
smooth well-formed Fano complete intersection, which is not an intersection with
a linear cone. In other words, $\gcd(a_0, \dots, \hat{a_i}, \dots, a_{4+c})=1$ for every $i$, and $d_j \notin \{ a_0, \cdots, a_{4+c}\}$ for every $j$. Put
\[
  H=c_1(\OO_X(1)),\quad
  \iota=\sum_i a_i-\sum_jd_j,\quad
  d_X=\frac{\prod_jd_j}{\prod_i a_i}=\int_XH^4,\quad
  N=\dim H^4_{\pr}(X).
\]
Such an $X$ is disjoint from the singular locus of the ambient space
and is quasismooth by \cite[Proposition~2.11 and Corollary~2.14]{PrzyjalkowskiShramovBounds}.
Thus $H$ is a Cartier class. The numbers $\iota$ and $d_X$ are its
Fano index and degree, and $-K_X=\iota H$. There are 22 families excluding $\PP^4$, which we list in Table \ref{ci:tab:irrationality}; we use the numbering of \cite[Table~1]{PrzyjalkowskiShramovBounds}. 

\begin{rem}
    The exclusion of linear cones causes no loss at the level of deformation families.
Indeed, special linear cone members deform to members for which the corresponding
coordinate can be eliminated; for example, $X_{2,4}\subset\PP(1^6,2)$ deforms to
a quartic $X_4\subset\PP^5$; see \cite[Remark~5.2]{PrzyjalkowskiShramovBounds}.
\end{rem}

\begingroup
\setlength{\tabcolsep}{4pt}
\renewcommand{\arraystretch}{1.1}
\begin{longtable}{@{}rlrr>{\raggedright\arraybackslash}p{.28\textwidth}c@{}}
  \caption{Smooth Fano weighted complete intersection fourfolds excluding
  $\PP^4$. Irrationality means every smooth member unless otherwise stated.
  The last column records $\sigma(X)=4-2\iota/\max_j d_j$.}
  \label{ci:tab:irrationality}\\
  \toprule
  No.&$X$&$\iota$&$N$&Conclusion&$\sigma(X)$\\
  \midrule\endfirsthead
  \toprule
  No.&$X$&$\iota$&$N$&Conclusion&$\sigma(X)$\\
  \midrule\endhead
  \bottomrule\endfoot
  1&$X_{6,6}\subset\PP(1^3,2^2,3^2)$&1&716&
  irrational, Theorem~\ref{ci:thm:index-one}&$11/3$\\
  2&$X_{10}\subset\PP(1^4,2,5)$&1&2624&
  irrational, Theorem~\ref{ci:thm:index-one}&$19/5$\\
  3&$X_{4,6}\subset\PP(1^4,2^2,3)$&1&813&
  irrational, Theorem~\ref{ci:thm:index-one}&$11/3$\\
  4&$X_8\subset\PP(1^5,4)$&1&2101&
  irrational, Theorem~\ref{ci:thm:index-one}&$15/4$\\
  5&$X_6\subset\PP(1^5,2)$&1&1042&
  irrational, Theorem~\ref{ci:thm:index-one}&$11/3$\\
  6&$X_{4,4}\subset\PP(1^5,2^2)$&1&527&
  irrational, Theorem~\ref{ci:thm:index-one}&$7/2$\\
  7&$X_{2,6}\subset\PP(1^6,3)$&1&1303&
  irrational \cite[Section~0]{PukhlikovDoubleQuadric};
  see Remark~\ref{ci:rem:double-quadric}&$11/3$\\
  8&$X_5\subset\PP^5$&1&820&
  irrational, Theorem~\ref{ci:thm:index-one}&$18/5$\\
  9&$X_{3,4}\subset\PP(1^6,2)$&1&505&
  irrational, Theorem~\ref{ci:thm:index-one}&$7/2$\\
  10&$X_{2,4}\subset\PP^6$&1&547&
  irrational, Theorem~\ref{ci:thm:index-one}&$7/2$\\
  11&$X_{3,3}\subset\PP^6$&1&364&
  irrational, Theorem~\ref{ci:thm:index-one}&$10/3$\\
  12&$X_{2,2,3}\subset\PP^7$&1&319&
  irrational, Theorem~\ref{ci:thm:index-one}&$10/3$\\
  13&$X_{2,2,2,2}\subset\PP^8$&1&219&
  irrational, Theorem~\ref{ci:thm:index-one}&$3$\\
  14&$X_6\subset\PP(1^5,3)$&2&521&
  irrational, Theorem~\ref{ci:thm:index-two}&$10/3$\\
  15&$X_4\subset\PP^5$&2&183&
  irrational, Theorem~\ref{ci:thm:index-two}&$3$\\
  16&$X_{2,3}\subset\PP^6$&2&85&
  irrational, Theorem~\ref{ci:thm:index-two}&$8/3$\\
  17&$X_{2,2,2}\subset\PP^7$&2&43&
  rational members exist
  \cite[Theorem~1]{HassettPirutkaTschinkelThreeQuadrics};
  see Remark~\ref{ci:rem:three-quadrics}&$2$\\
  18&$X_6\subset\PP(1^4,2,3)$&3&208&
  irrational, Theorem~\ref{ci:thm:index-three}&$3$\\
  19&$X_4\subset\PP(1^5,2)$&3&61&
  irrational, Theorem~\ref{ci:thm:index-three}&$5/2$\\
  20&$X_3\subset\PP^5$&3&22&
  rational members exist
  \cite{BolognesiRussoStaglianoRationalCubics};
  see Remark~\ref{ci:rem:cubic-fourfold}&$2$\\
  21&$X_{2,2}\subset\PP^6$&3&7&
  rational \cite[Table~2]{PrzyjalkowskiShramovBounds};
  see Remark~\ref{ci:rem:small-monodromy}&$1$\\
  22&$X_2\subset\PP^5$&4&1&
  rational \cite[Table~2]{PrzyjalkowskiShramovBounds}&$0$\\
\end{longtable}
\endgroup

Throughout this section, $X$ will always be a smooth member of Nos.~1--20 excluding No.~7.

\subsection{Monodromy}
\label{ci:subsec:weighted-monodromy}

Let $S_{\mathbf d,\mathbf a}$ be the open parameter space of smooth
complete intersections of multidegree $\mathbf d$ in
$\PP(\mathbf a)$, and let $\rho_{\pr}$ denote the representation of $\pi_1(S_{\mathbf d,\mathbf a})$ on
$V=H^4_{\pr}(X,\CC)$. By Zariski closure, we mean Zariski closure of
the image of this representation.

\begin{lem}
  \label{ci:lem:reflection-density}
  Let $V_{\mathbb R}$ carry a nondegenerate indefinite symmetric form,
  with $\dim V_{\mathbb R}\ge3$. Suppose that
  $\Gamma\subset O(V_{\mathbb R})$ acts irreducibly on
  $V=V_{\mathbb R}\otimes\CC$ and is generated by reflections and
  unipotent transformations, with at least one reflection. Then
  $\overline\Gamma^{\,\mathrm{Zar}}=O(V)$.
\end{lem}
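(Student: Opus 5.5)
Let $G=\overline\Gamma^{\,\mathrm{Zar}}\subset O(V)$, a complex algebraic group acting irreducibly on $V$. The plan is to show first that the identity component $G^\circ$ is a connected reductive group. We then use the reflection to force $G^\circ=SO(V)$, and finally use the reflection once more to get the non-identity component.

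\emph{Step 1: reductivity and irreducibility of $G^\circ$.} Since $V$ is a faithful irreducible representation of $G$, the unipotent radical of $G$ acts trivially on $V$ (its fixed space is a nonzero $G$-stable subspace, hence all of $V$). So $G$, and hence $G^\circ$, is reductive. Irreducibility of $G^\circ$ is not automatic. The unipotent generators $u$ do lie in $G^\circ$, because the Zariski closure of $\{u^n\}$ is a connected unipotent one-parameter group $\exp(t\log u)$. For a reflection $s$ I would use a different argument. Consider the real points: the form is indefinite, so the orbit of the reflection vector $v$ under $\Gamma$ (or under $\Gamma$ together with the unipotent elements) should be infinite. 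Then the Zariski closure of the conjugates of $s$ has positive dimension. This gives elements $g s g^{-1}s\in G^\circ$ that are nontrivial transvection-like elements: products of two reflections, acting as the identity on a codimension-$\le2$ subspace.

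\emph{Step 2: identify $\mathfrak g=\mathrm{Lie}(G^\circ)$.} The main obstacle is showing $G^\circ$ acts irreducibly. My first attempt is the classical argument for reflection groups: the set of reflection vectors $\{gv: g\in\Gamma\}$ spans $V$ by irreducibility of $\Gamma$. Its Zariski closure is a union of finitely many $G^\circ$-orbits of the cone over it. This shows that $\mathfrak g$ contains the rank-two elements $x\wedge y:=\langle y,\cdot\rangle x-\langle x,\cdot\rangle y$ for $x,y$ in a spanning set. Concretely, differentiating the curve $t\mapsto s_{v(t)}s_{v}$ along a curve $v(t)$ in the closed orbit gives $v\wedge \dot v\in\mathfrak g$. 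Next, the span of the $G^\circ$-orbit of one such element is a $G^\circ$-stable subspace of $\mathfrak{so}(V)=\wedge^2V$. Since the reflection vectors span $V$ and $\mathfrak{so}(V)$ is generated as a Lie algebra by $\{x\wedge y\}$ with $x,y$ ranging over a spanning set (using $[x\wedge y,y\wedge z]=\langle y,y\rangle x\wedge z+\dots$, valid when $\dim V\ge3$ and the vectors are non-isotropic), we conclude $\mathfrak g=\mathfrak{so}(V)$. An alternative route, if the Lie-derivative argument is awkward, is to invoke the classification of irreducible connected reductive subgroups of $GL(V)$ containing an element with fixed space of codimension $2$ (the "bireflection" classification). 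Such a subgroup is $SL$, $Sp$, or $SO$, and preservation of the symmetric form forces $SO(V)$. I expect this to be the main technical point, in particular ensuring the orbit of $v$ is genuinely infinite; this uses indefiniteness together with $\dim\ge3$, since a finite orbit would make $\Gamma$ finite modulo a compact part.

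\emph{Step 3: conclude.} Once $G^\circ=SO(V)$, we have $G\subset O(V)$ with $[O(V):SO(V)]=2$. The reflection $s\in\Gamma\subset G$ has determinant $-1$, so $G\ne SO(V)$, and therefore $G=O(V)$.
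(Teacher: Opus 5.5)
\textbf{There is a genuine gap, in Step 2.} Your Step 3 and the reductivity part of Step 1 are fine.

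Differentiating $t\mapsto s_{v(t)}s_v$ along $v(t)=g(t)v$ with $g(t)\in G^\circ$ correctly gives $v\wedge Xv\in\mathfrak g$ for every $X\in\mathfrak g$. But that only produces elements $v\wedge y$ with $y$ in the tangent space $\mathfrak g v\subset v^\perp$ of the orbit $G^\circ v$. It does not give $x\wedge y\in\mathfrak g$ for $x,y$ running over a spanning set of reflection vectors: nothing you prove places a second reflection vector $w$ (even modulo $\CC v$) in $\mathfrak g v$.

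In fact the bracket $[v\wedge y,v\wedge y']=-\langle v,v\rangle\, y\wedge y'$ shows $\wedge^2(\CC v\oplus\mathfrak g v)\subset\mathfrak g$. So $\mathfrak g v=v^\perp$ is already equivalent to $\mathfrak g=\mathfrak{so}(V)$. The size of $\mathfrak g v$ is therefore exactly what has to be proved, and your generation argument assumes it. Gluing the pieces $\wedge^2(\CC w\oplus\mathfrak g w)$ for different $w$ would need a connectivity or irreducibility input you have not supplied. An infinite orbit only gives $\mathfrak g\neq0$, not that $\mathfrak g v$ is large.

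Your fallback route has the same hole, plus a false citation as stated. An irreducible connected group containing an element with codimension-two fixed space need not be $SL$, $Sp$ or $SO$. The exceptional group $G_2\subset SO(7)$ acts irreducibly, and its long-root unipotents have Jordan type $(2,2,1^3)$, so they fix a codimension-two subspace.

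The paper invokes \cite[Theorem~4.1]{AudibertDoubaLeeMarquis} only for an element of $G^\circ$ conjugate to $\diag(\lambda,\lambda^{-1},1,\ldots,1)$ with $\lambda$ not a root of unity. Producing such an element as $r_\delta r_{g\delta}$ requires $g\mapsto(\delta,g\delta)$ to be non-constant on $G^\circ$. That in turn uses irreducibility of $G^\circ$, which you name as the main obstacle but never prove. This is the core of the paper's argument:
\begin{itemize}
\item $G$ permutes the $G^\circ$-isotypic components transitively.
\item Unipotent generators lie in $G^\circ$ and fix each component.
\item A reflection, having $\rk(r-I)=1$, can move a component only if that component is a line.
\item If the components have dimension $>1$, irreducibility of $\Gamma$ forces $V\simeq U\otimes M$. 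A reflection then acts as $A\otimes B$ with $A^2=B^2=I$, and its single eigenvalue $-1$ forces $\dim M=1$.
\item If the components are lines, $G^\circ$ is a torus whose weight spaces pair into planes $V_\chi\oplus V_{\chi^{-1}}$. A reflection cannot interchange two such planes, which forces $\dim V=2$.
\end{itemize}
This torus case is where $\dim V\ge3$ is used, not in the infinitude of the orbit.

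Finally, replace your heuristic about finite orbits by the paper's argument. If $G$ were finite, averaging gives a $G$-invariant positive definite form. Schur's lemma, using irreducibility over $\CC$, then makes the given form proportional to it, hence definite, a contradiction.
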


\begin{proof}
  Put $G=\overline{\Gamma}^{\,\mathrm{Zar}}$, and let $G^\circ$ denote
  the identity component of $G$ in the Zariski topology. Since $G$ acts
  irreducibly on $V$, its unipotent radical is trivial, so $G$ is
  reductive. Moreover, $G$ cannot be finite: averaging a positive
  definite real form over $G$ and applying Schur's lemma would force the
  given $G$-invariant symmetric form to be definite. Hence $G^\circ$ is
  nontrivial.

  We first show that $G^\circ$ acts irreducibly. Since
  $G^\circ\triangleleft G$, the $G^\circ$-isotypic components of $V$ are
  permuted transitively by $G$. Every unipotent element of $G$ lies in
  $G^\circ$ and hence preserves each component. Since
  $\rk(r-I)=1$ for a reflection $r$, a reflection can permute two
  distinct components only if they are one-dimensional.

  Suppose first that the isotypic components have dimension greater than
  one. Since $\Gamma$ is generated by reflections and unipotents,
  irreducibility forces a single isotypic component, say
  $V\simeq U\otimes M$. A reflection normalizing the $G^\circ$-action
  has the form $A\otimes B$: choose an intertwiner on $U$ for its
  conjugation action, then apply Schur's lemma to the multiplicity space. Since $(A\otimes B)^2=I$, after reciprocal
  rescaling we may assume $A^2=B^2=I$. If
  $\dim U,\dim M\geq2$, the eigenvalues $a_i b_j$ of $A\otimes B$
  cannot consist of exactly one $-1$ and all remaining $1$, since
  $(a_i b_j)(a_k b_l)=(a_i b_l)(a_k b_j)$. Hence $\dim M=1$.
  The alternative $\dim U=1$ would make $G^\circ$ a connected scalar
  subgroup of $O(V)$, hence trivial.

  Suppose instead that all isotypic components are lines. Then $G^\circ$
  is a torus. The trivial weight cannot occur, since its isotypic
  component would be $G$-invariant. Preservation of the symmetric form
  pairs the weight spaces as $V_\chi\oplus V_{\chi^{-1}}$. A reflection cannot
  permute two distinct such planes, since this would give
  $\rk(r-I)\geq2$, while $G^\circ$ contains no nontrivial unipotents.
  Thus irreducibility would force $\dim V=2$, contrary to assumption.
  Therefore $G^\circ$ acts irreducibly on $V$. By Schur's lemma its
  connected center acts by scalars; since $G^\circ\subset O(V)$, this
  center is trivial. Hence $G^\circ$ is semisimple.

  Choose a reflection $r_\delta\in G$. For $g\in G^\circ$, we have
  $r_{g\delta}=g r_\delta g^{-1}$, so
  $r_\delta r_{g\delta}\in G^\circ$, and this product fixes
  $\delta^\perp\cap(g\delta)^\perp$. Since $G^\circ$ acts irreducibly,
  the orbit $G^\circ\delta$ spans $V$. If
  $g\mapsto(\delta,g\delta)$ were constant, the covector
  $(\delta,-)$ would be $G^\circ$-invariant, a contradiction. We may therefore
  choose $g$ such that $r_\delta r_{g\delta}$ is conjugate to
  $\diag(\lambda,\lambda^{-1},1,\ldots,1)$ with $\lambda$ not a root of
  unity.

  By \cite[Theorem~4.1]{AudibertDoubaLeeMarquis}, a connected semisimple
  group acting faithfully and irreducibly and containing such an element
  is, in its defining representation, one of $SL(V)$, $SO(V)$, or
  $Sp(V)$. Since $G^\circ\subset O(V)$, only the orthogonal case is
  possible, and hence $G^\circ=SO(V)$. Finally, $G$ contains a reflection
  of determinant $-1$, so $G=O(V)$.
\end{proof}

\begin{prop}
  \label{ci:prop:weighted-monodromy}
  For families Nos.~1--20 in Table~\ref{ci:tab:irrationality}, except No.~7,
  \begin{equation*}
    \label{ci:eq:weighted-monodromy}
    \overline{\rho_{\pr}(\pi_1(S_{\mathbf d,\mathbf a}))}
    ^{\,\mathrm{Zar}}=O(V).
  \end{equation*}
\end{prop}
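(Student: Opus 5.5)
We will deduce the proposition from Lemma~\ref{ci:lem:reflection-density}, applied to $V_{\mathbb R}=H^4_{\pr}(X,\mathbb R)$ with the intersection form and $\Gamma=\rho_{\pr}(\pi_1(S_{\mathbf d,\mathbf a}))$. The hypotheses are checked as follows.

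\emph{The form.} Since $\dim X=4$ is even, the intersection form on $H^4_{\pr}$ is symmetric and nondegenerate. By the Hodge--Riemann relations its signature is determined by the primitive Hodge numbers; for these Fano fourfolds $h^{3,1}_{\pr}$ and $h^{2,2}_{\pr}$ are both positive, so the form is indefinite. The Table gives $N\ge 22\ge3$. Both facts are read off from Griffiths--Dolgachev--Steenbrink Jacobian ring computations for quasismooth weighted complete intersections. In the weighted case $h^{3,1}$ could a priori vanish, but the values of $N$ listed make a definite form implausible and this can be checked case by case.

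\emph{Generators: Picard--Lefschetz.} Take a general pencil, or a general one-parameter family in $S_{\mathbf d,\mathbf a}$, meeting the discriminant transversally at smooth points. The generic singular member then has a single ordinary double point away from the singular locus of $\PP(\mathbf a)$; this uses quasismoothness and the fact that $X$ avoids the orbifold locus. Zariski's theorem on fundamental groups of complements of discriminants (Lefschetz--Zariski type) shows that $\pi_1(S)$ is generated by a loop around the smooth part of the discriminant, together with its conjugates, and possibly loops around other components such as coordinate hyperplanes. By Picard--Lefschetz in even dimension $4$, a loop around a nodal degeneration acts as the reflection $x\mapsto x+(x,\delta)\delta$ in a vanishing cycle with $(\delta,\delta)=2$, up to sign convention. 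Any other generators can be taken to have quasi-unipotent monodromy. To stay within the lemma's hypotheses, we instead note that $\Gamma$ contains the subgroup generated by the reflections, that this subgroup is already irreducible (next step), and that the lemma applied to this subgroup gives a Zariski closure $O(V)$, which is then contained in $\overline\Gamma$.

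\emph{Irreducibility.} This is the main obstacle. We follow Deligne's argument (SGA~7 XVIII, Weil~II~4.4). The vanishing cycles are all conjugate under $\Gamma$, because the discriminant is irreducible; irreducibility of the discriminant follows from irreducibility of the incidence variety of pairs (member, singular point). Together with Lefschetz theory, the vanishing cycles span $H^4_{\pr}$, since the orthogonal complement of the vanishing cycles is the image of the cohomology of the total space, which is ambient. Hence any invariant subspace $W$ either contains some $\delta$, and then all of them by conjugacy, so $W=V$; or it is orthogonal to all $\delta$, and then $W=0$. This gives irreducibility over $\CC$.

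The delicate points in the weighted setting are:
\begin{itemize}
\item the Lefschetz theorem and the existence of nodal degenerations for sections of $\OO(d_j)$ on a weighted projective space, which requires $\OO(d_j)$ to be sufficiently generated on the locus swept out by $X$;
\item irreducibility of the discriminant for complete intersections.
\end{itemize}
For these we would invoke the general results for families of quasismooth complete intersections (Dimca-type Lefschetz theory for weighted projective spaces), checking family by family from the degrees that each $d_j$ is divisible enough for base-point freeness near $X$. The exclusion of No.~7 suggests exactly that such a hypothesis fails or degenerates there, for instance through the double-cover structure, which we would note. Granting these, Lemma~\ref{ci:lem:reflection-density} yields the proposition.
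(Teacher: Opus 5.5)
\textbf{Gap 1: irreducibility of the reflection subgroup.} Your skeleton matches the paper's: Lemma~\ref{ci:lem:reflection-density}, indefiniteness from $h^{3,1}>0$ and $h^{2,2}_{\pr}>0$, and Picard--Lefschetz reflections at nodal members. But the step meant to let you ignore the non-nodal discriminant components does not work as written.

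In the weighted families, the complement of the smooth locus contains divisors of members passing through the cyclic quotient points of the ambient space. For $X_4\subset\PP(1^5,2)$ this is the vanishing of the coefficient of $y^2$, with $y$ the weight-two coordinate. A general pencil meets these divisors, and a priori their monodromy is only quasi-unipotent.

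You pass to the subgroup $\Gamma_0$ generated by the reflections. However, your irreducibility argument uses that the vanishing cycles are conjugate under $\Gamma$. For a $\Gamma_0$-invariant $W$, the step ``$\delta\in W\Rightarrow g\delta\in W$'' needs $gW=W$. The conjugating $g$ come from $\pi_1$ of the complement of the \emph{whole} discriminant, so they may involve the extra loops and need not lie in $\Gamma_0$.

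What your argument actually gives is weaker. The connected components of the orthogonality graph on the vanishing cycles span an orthogonal decomposition $V=\bigoplus_i V_i$. Each $V_i$ is $\Gamma_0$-irreducible, and $\Gamma$ permutes the $V_i$. To exclude more than one block you need information about the extra generators. If they are unipotent, they fix every block: some power fixes each $V_i$, and a unipotent element lies in the Zariski closure of its powers. Irreducibility of $\Gamma$ then forces a single block. A merely quasi-unipotent element, however, could permute the blocks.

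So the detour through $\Gamma_0$ does not avoid the local analysis at the quotient points, and the paper supplies exactly that analysis:
\begin{itemize}
\item At a quotient point, the local equation has an isolated weighted-homogeneous principal part $f_r$.
\item The rescaling $f_\epsilon$ is a $\mu$-constant deformation.
\item The geometric monodromy of $f_r$ is the $\mu_r$-action itself, hence trivial on $H_4(M_r,\CC)^{\mu_r}$.
\item The variation map gives $\im(T-I)\subset V_p\subset\Ker(T-I)$, so $(T-I)^2=0$.
\end{itemize}
This is why the lemma is stated for groups generated by reflections \emph{and unipotents}, and the paper applies it to $\Gamma$ directly.

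\textbf{Gap 2: the vanishing cycles span $V$.} You grant the step that the vanishing cycles span $V$, i.e.\ that the monodromy-invariant part of $H^4(X)$ is $\CC H^2$. With a singular ambient space this is the real content, and it is exactly what fails for No.~7. There $X$ is a double cover of a quadric fourfold, whose primitive class pulls back to a nondegenerate monodromy-invariant line $L\subset V$. The Zariski closure therefore lies in $O(L)\times O(L^\perp)$, so the statement is false for No.~7, not merely harder to prove; base-point freeness is not the issue.

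The paper proves the spanning as follows:
\begin{itemize}
\item Fix general lower-degree equations to obtain a fivefold $Y$ with only isolated cyclic quotient singularities, embedded by the listed very ample degree.
\item Dimca's weighted Lefschetz theorem gives $H^4(Y)=\CC H^2$.
\item The quotient points are rational homology manifold points, so exceptional classes of a resolution restrict to zero on $X$.
\item The invariant cycle theorem then identifies the vanishing part with $V$.
\end{itemize}
Irreducibility, and the reflection description of the nodal meridians, come from Di~Gennaro--Franco for hyperplane sections of such $Y$. Working in this hyperplane system also reduces the extra discriminant components to finitely many hyperplanes of sections through the quotient points.
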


\begin{proof}
  For the ordinary projective complete intersections in the table, this follows from the work of Deligne \cite{SGA7II,DeligneWeilII}. 
  We treat the genuinely weighted families
  other than No.~7.

  Fix general lower-degree equations and vary the last equation on the
  fivefold $Y$ in Table~\ref{ci:tab:monodromy-fivefolds}.

  \begin{table}[htbp]
    \centering\small
    \renewcommand{\arraystretch}{1.15}
    \caption{Auxiliary fivefolds for weighted monodromy. The degree specifies
    the very ample system; quotient types are listed without multiplicities.}
    \label{ci:tab:monodromy-fivefolds}
    \begin{tabular}{@{}rlrl@{}}
      \toprule
      No.&$Y$&Degree&Quotient types\\
      \midrule
      1&$X_6\subset\PP(1^3,2^2,3^2)$&6&$\frac12(1^5),\ \frac13(1^3,2^2)$\\
      2&$\PP(1^4,2,5)$&10&$\frac12(1^5),\ \frac15(1^4,2)$\\
      3&$X_4\subset\PP(1^4,2^2,3)$&6&$\frac12(1^5),\ \frac13(1^3,2^2)$\\
      4&$\PP(1^5,4)$&8&$\frac14(1^5)$\\
      5&$\PP(1^5,2)$&6&$\frac12(1^5)$\\
      6&$X_4\subset\PP(1^5,2^2)$&4&$\frac12(1^5)$\\
      9&$X_3\subset\PP(1^6,2)$&4&$\frac12(1^5)$\\
      14&$\PP(1^5,3)$&6&$\frac13(1^5)$\\
      18&$\PP(1^4,2,3)$&6&$\frac12(1^5),\ \frac13(1^4,2)$\\
      19&$\PP(1^5,2)$&4&$\frac12(1^5)$\\
      \bottomrule
    \end{tabular}
  \end{table}

  Each $Y$ is quasismooth with only the isolated cyclic quotient
  singularities displayed. The indicated degree monomials give a
  closed embedding: on each weighted affine chart, invariant monomial
  generators are ratios of monomials of the indicated degree.
  Restriction of weighted polynomials gives the complete hyperplane system.
  Weighted Lefschetz \cite[Proposition~6]{DimcaWeightedMonodromy} gives
  \[
    H^4(Y,\CC)=\CC H^2.
  \]
  These quotient points are rational homology manifold points, so
  $H^4(Y)\to H^4(Y_{\reg})$ is an isomorphism. Consequently the image
  of $H^4(\widetilde Y)$ in $H^4(X)$, for a resolution
  $\widetilde Y\to Y$, equals the image of $H^4(Y)$: exceptional
  classes restrict to zero because $X$ avoids the singular points.
  The invariant-cycle theorem therefore identifies the invariant part with
  $\CC H^2$, and the vanishing cohomology is
  \[
    H^4(X)_{van}=H^4_{\pr}(X)=V.
  \]
  By \cite[Theorem~3.1 and Corollary~3.7]{DiGennaroFranco}, the induced
  monodromy on $V$ is irreducible, also over $\CC$, and the meridians
  about the ordinary nodal locus act by Picard--Lefschetz reflections.
  Indeed, the standard vanishing cycles form one orbit and span $V$;
  the usual reflection proof of irreducibility works over $\CC$ as well.

  It remains to consider a discriminant component consisting of sections
  through a quotient point
  \[
    0\in \CC^5/\mu_r,
    \qquad
    \zeta\cdot(x_1,\ldots,x_5)
    =
    (\zeta^{b_1}x_1,\ldots,\zeta^{b_5}x_5).
  \]
  For a general section through such a point, the local equation has the form
  \[
    f=f_r+\text{(terms of weighted degree $>r$)},
  \]
  where $f_r$ has an isolated critical point. The required principal
  parts occur in the global systems above, as shown by the following
  choices; having an isolated critical point is an open condition:
  \[
    \begin{array}{c|l}
      \text{type}&f_r\\ \hline
      \frac12(1^5)&\sum_{j=1}^5x_j^2\\
      \frac13(1^5)&\sum_{j=1}^5x_j^3\\
      \frac13(1^4,2)&yx_1+x_2^3+x_3^3+x_4^3\\
      \frac13(1^3,2^2)&y_1x_1+y_2x_2+x_3^3\\
      \frac14(1^5)&\sum_{j=1}^5x_j^4\\
      \frac15(1^4,2)&x_1y^2+x_1^5+x_2^5+x_3^5+x_4^5.
    \end{array}
  \]
  Consider
  \[
    f_\epsilon(x)
    =
    \epsilon^{-r}
    f(\epsilon^{b_1}x_1,\ldots,\epsilon^{b_5}x_5),
    \qquad
    f_0=f_r.
  \]
  The isolated weighted principal part makes this a deformation of
  constant Milnor number; see \cite[I, Corollary~2.18]{GreuelLossenShustin}.
  {After shrinking the parameter space if necessary, choose a common
$\mu_r$-invariant Milnor ball $B_\delta$ and a disk
$D_\eta\subset\CC$ such that
\[
  f_\epsilon:
  B_\delta\cap f_\epsilon^{-1}(D_\eta^*)
  \longrightarrow D_\eta^*
\]
is a Milnor fibration for every $\epsilon$. Parallel transport of the
middle cohomology of the Milnor fibers is $\mu_r$-equivariant.}
  
For $f_r$,
  the geometric monodromy is
  \[
    (x_1,\ldots,x_5)
    \longmapsto
    (\zeta^{b_1}x_1,\ldots,\zeta^{b_5}x_5),
    \qquad
    \zeta=e^{2\pi i/r},
  \]
  which is exactly the $\mu_r$-action defining the quotient; see
  \cite[Section~9]{MilnorSingularPoints}. The group acts freely on
  the Milnor fiber $M_r$, so transfer identifies its quotient's
  middle homology with $H_4(M_r,\CC)^{\mu_r}$. The local monodromy on
  this invariant part is the identity. Let $V_p$ be its image in
  $H_4(X,\CC)\simeq H^4(X,\CC)$. The variation map gives
  \[
    \im(T-I)\subset V_p\subset\Ker(T-I),
  \]
  and hence $(T-I)^2=0$.

  The fundamental group of the complement of the hyperplane discriminant is generated by
  meridians and their conjugates. Its codimension-one components are
  the nodal locus and the hyperplanes through the quotient points.
  Thus its monodromy is generated by reflections and the unipotents
  just obtained. There is a reflection because the nodal vanishing
  cycles span the nonzero space $V$.

  Finally, the primitive real form is indefinite, since
  \[
    h^{3,1}(X)>0,
    \qquad
    h^{2,2}_{\pr}(X)>0
  \]
  for all these families; see \cite[Table~1]{PrzyjalkowskiShramovBounds}.
  Lemma~\ref{ci:lem:reflection-density} therefore gives
  \[
    \overline{\rho_{\pr}(\pi_1)}^{\,\mathrm{Zar}}=O(V)
  \]
  for this hyperplane-section subfamily, and hence also for the full
  family. The smooth parameter space is connected, so parallel
  transport gives the same conclusion at every smooth member.
\end{proof}

\subsection{Monodromy reduction}

Cohomology has complex coefficients unless indicated otherwise. We write
\[
  H^{\ev}(X)=A\oplus V,\qquad
  A=\CC\langle 1,H,H_2,H_3,H_4\rangle,\qquad
  V=\Ker\bigl(H\cup-:H^4(X)\to H^6(X)\bigr).
\]
Here $H_k$ is the $k$-th power of $H$ under the classical cup product, with $H_0=1$ and $H_1=H$. The Poincar\'e form is
bilinear, $A,V$ are perpendicular, and
\[
  g_{ab}=(H_a,H_b)_X=d_X\delta_{a+b,4},\qquad
  (g^{ab})=(g_{ab})^{-1}.
\]
Choose an orthonormal basis $\phi_1,\ldots,\phi_N$ of $V$, and fix
$\phi\in V$ with $(\phi,\phi)_X=1$. Put $W_\phi=\phi^\perp\subset V$.

After extending the Novikov coefficients, we work over
\[
  K=\overline{\Frac(\CC\llbracket Q\rrbracket )},\qquad
  R=K\llbracket t^0,\ldots,t^4,u^1,\ldots,u^N\rrbracket ,\qquad
  \tau=t+u=\sum_at^aH_a+\sum_iu^i\phi_i.
\]
Consider the ideals $(t)=(t^0,\dots,t^4)$, $(u)=(u^1,\dots,u^N)$, and $I_\phi=(( u,w)_X:w\in W_\phi)$. Introduce the following rings:
\begin{equation*}
  \label{ci:eq:coefficient-rings}
  \begin{array}{c|c|c}
    \text{ring}&\text{presentation}&\text{bulk locus}\\
    \hline
    R_{\sm}&K&t=u=0\\
    R_{\mathrm{amb}}&R/(u)=K\llbracket t^0,\ldots,t^4\rrbracket &u=0\\
    R_{\mathrm{rad}}&R/I_\phi=K\llbracket t^0,\ldots,t^4,r\rrbracket &u=r\phi\\
    R_{\pr}&R/(t)=K\llbracket u^1,\ldots,u^N\rrbracket &t=0\\
    R_r&R/(t,I_\phi)=K\llbracket r\rrbracket &t=0,\ u=r\phi.
  \end{array}
\end{equation*}
Write $S_\bullet=\Spf R_\bullet$, $K_\bullet=\Frac(R_\bullet)$,
$k_\bullet=\overline{K_\bullet}$, and $k=\overline{\Frac(R)}$.

Set $\mathcal{H}=H^{\ev}(X)\otimes_\mathbb{C}R$. For a restriction ring $R_\bullet$ in the table , write $A_\bullet=A\otimes_\CC R_\bullet$,
$V_\bullet=V\otimes_\CC R_\bullet$, and $\mathcal H_\bullet=H^{\ev}(X)\otimes_\CC R_\bullet$.
For a summand $\mathcal H_i$, put
$\mathcal H_{i,\bullet}=\mathcal H_i\otimes_RR_\bullet$.
Subscripts on functions, vectors, products, and matrices denote the
same restriction. 
Finally, we write $\mathcal A_z=R\llbracket z\rrbracket [z^{-1}]$.

For $x \in H^{\ev}(X)$, let $\partial_x$ denote differentiation in the direction of the corresponding constant vector field. For $f\in R$, define its gradient and differential
\begin{equation*}
  \label{ci:eq:full-gradient}
  \grad f
  =\sum_{a,b=0}^4g^{ab}f_{t^b}H_a+\sum_{i=1}^Nf_{u^i}\phi_i,
  \qquad
  (\grad f,x)_X=df(x).
\end{equation*}
For an $R$-valued vector $w$, write $\partial_w f=df(w)$.
After restriction,
\begin{equation*}
  \label{ci:eq:restricted-differential}
  (( \grad f)_\bullet,w)_X=(df)_\bullet(w),\qquad
  (df)_\bullet(w)=d(f_\bullet)(w)\quad \text{ for }w\in TS_\bullet.
\end{equation*}
We identify $TS_{\mathrm{amb}}=A_{{\mathrm{amb}}},
TS_{\mathrm{rad}}=A_{\mathrm{rad}}\oplus R_{\mathrm{rad}}\phi,
TS_{\pr}=V_{{\pr}}, TS_r=R_r\phi$.

Introduce the variable $s=\frac{1}{2}(u,u)_X.$ Note that $s$ is quadratic in the flat primitive coordinates.
\begin{prop}
    \label{ci:prop:monodromy-invt}
    The genus-zero Gromov--Witten potential $\Phi$ of $X$ is $O(V)$-invariant, and has the form
    \begin{equation}
    \label{ci:eq:monodromy-inv}
        \Phi=\Phi(t,s).
    \end{equation}
    In particular,
    \begin{equation}
\label{ci:eq:prod-equiv}
    g(a\star_{t,u}b)=(ga)\star_{t,gu}(gb),\ \text{ for } g\in O(V). 
\end{equation}
\end{prop}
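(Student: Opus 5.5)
We derive the $O(V)$-invariance of $\Phi$ from the deformation invariance of Gromov--Witten invariants combined with Proposition~\ref{ci:prop:weighted-monodromy}, and then apply classical invariant theory of the orthogonal group to reduce the primitive dependence to $s$.

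\emph{Step 1: invariance under monodromy.} Let $\gamma\in\pi_1(S_{\mathbf d,\mathbf a})$ and let $g_\gamma=\rho(\gamma)$ act on $H^{\ev}(X,\CC)$. Parallel transport in the smooth family along $\gamma$ is induced by a symplectic deformation, because a loop of smooth Kähler members carries a symplectic isotopy class of diffeomorphisms. The diffeomorphism sends the Kähler class $H$ to itself, so it preserves $c_1$, the Novikov variable, and the effective cone. Deformation invariance of genus-zero invariants then gives
\[
\langle g_\gamma\alpha_1,\ldots,g_\gamma\alpha_n\rangle_{0,n,d}=\langle\alpha_1,\ldots,\alpha_n\rangle_{0,n,d}.
\]
Since $g_\gamma$ fixes $A=\CC\langle H_k\rangle$ (these are restricted ambient classes) and acts on $V$ through $\rho_{\pr}$, we get $\Phi(t,g_\gamma u)=\Phi(t,u)$ coefficientwise. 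Each Taylor coefficient of $\Phi$ in $u$, with fixed $t$- and $Q$-degrees, is a polynomial on $V$ invariant under the image of $\rho_{\pr}$. The polynomials invariant under a group are the same as those invariant under its Zariski closure, because invariance is a Zariski-closed condition. By Proposition~\ref{ci:prop:weighted-monodromy}, each coefficient is therefore $O(V)$-invariant, and hence so is $\Phi$.

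\emph{Step 2: reduction to $s$.} By the first fundamental theorem of invariant theory for $O(V)$, which requires $\dim V\ge1$ and holds over $\CC$, the ring $\CC[V]^{O(V)}$ is generated by the quadratic form $(u,u)_X=2s$. Applying this to each homogeneous coefficient shows that $\Phi$ lies in $K\llbracket t,s\rrbracket$, i.e.\ $\Phi=\Phi(t,s)$. Since each term of fixed $u$-degree is handled separately, the formal completion causes no difficulty.

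\emph{Step 3: equivariance of the product.} We differentiate the identity $\Phi(t,gu)=\Phi(t,u)$ three times. The product is $(a\star_\tau b,c)_X=\partial_a\partial_b\partial_c\Phi(\tau)$. The chain rule gives
\[
\partial_{ga}\partial_{gb}\partial_{gc}\Phi(t,gu)=\partial_a\partial_b\partial_c\Phi(t,u),
\]
where $g$ acts trivially on $A$. Hence $((ga)\star_{t,gu}(gb),gc)_X=(a\star_{t,u}b,c)_X$. Because $g$ preserves the Poincaré pairing, this yields \eqref{ci:eq:prod-equiv}.

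The main obstacle is Step 1, specifically justifying that monodromy acts by symplectic-deformation automorphisms that preserve GW invariants. This needs a global symplectic family over loops in $S_{\mathbf d,\mathbf a}$, for example via Ehresmann's theorem together with Moser's argument, using the fixed class $H$. It also needs the fact that $g_\gamma$ fixes $H$ and the curve classes. Once that is in place, the remaining steps are formal.
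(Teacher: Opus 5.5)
Your proposal is correct and follows the paper's argument. Deformation invariance gives invariance under monodromy, multilinearity extends this to the Zariski closure $O(V)$ from Proposition~\ref{ci:prop:weighted-monodromy}, and the first fundamental theorem for the orthogonal group then gives $\Phi=\Phi(t,s)$. Your Step~3 only spells out the chain-rule derivation of \eqref{ci:eq:prod-equiv}, which the paper leaves implicit.
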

\begin{proof}
    Since Gromov--Witten invariants are deformation invariant, they are invariant under the monodromy action. Since they are multilinear in the insertions, they are moreover invariant under the Zariski closure of the monodromy action, which is $O(V)$ by Proposition~\ref{ci:prop:weighted-monodromy}. Then \eqref{ci:eq:monodromy-inv} follows from the first fundamental theorem of invariant theory for the orthogonal group.
\end{proof}

\begin{lem}
  \label{ci:lem:radial-testing}
  An $O(V)$-invariant formal tensor over $R$ is zero if and only if its
  restriction to $R_{\mathrm{rad}}$ is zero. The same holds for the restriction
  $R_{\pr}\to R_r$.
\end{lem}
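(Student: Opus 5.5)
The plan is to reduce to a statement about polynomial $O(V)$-equivariant maps and use that every $O(V)$-orbit of a generic vector meets the line $\CC\phi$. An $O(V)$-invariant formal tensor $T$ over $R$ is a formal power series in $t,u$ whose coefficients are tensors in $H^{\ev}(X)$, and invariance means $g\cdot T(t,u)=T(t,gu)$ for $g\in O(V)$, in the sense of \eqref{ci:eq:prod-equiv}. Expanding in $t$, it suffices to treat each $t$-coefficient separately. Expanding also in $u$, each homogeneous piece of $u$-degree $n$ is a polynomial map $T_n:V\to\mathcal T$, where $\mathcal T$ is a finite-dimensional tensor space, and $T_n$ is $O(V)$-equivariant. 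The restriction to $R_{\mathrm{rad}}$ is exactly the restriction of each $T_n$ to the line $u=r\phi$, read as a polynomial in $r$. The statement for $R_{\pr}\to R_r$ is the special case with the $t$-dependence suppressed, so one argument covers both claims.

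The key step is to show that an $O(V)$-equivariant polynomial map $F:V\to\mathcal T$ vanishing on $\CC\phi$ vanishes identically. By equivariance, $F$ then vanishes on $O(V)\cdot\CC\phi$. I would show that this set contains the Zariski-dense subset $\{u:(u,u)_X\neq0\}$. For such a $u$, choose $\lambda\in\CC$ with $\lambda^2=(u,u)_X$. Then $u$ and $\lambda\phi$ have the same nonzero norm, and by Witt's theorem over $\CC$ (or directly, since the reflection in $u-\lambda\phi$ works when that vector is non-isotropic, and a composition of two reflections works otherwise) some $g\in O(V)$ satisfies $gu=\lambda\phi$. Hence $F$ vanishes on a dense open set, so $F=0$. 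The converse implication, that $T=0$ forces its restriction to vanish, is trivial.

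The only subtle point is the formal and scalar setting. The coefficients lie in $K$, which is an extension of $\CC$, and the $O(V)$-action is over $\CC$. I would handle this by choosing a $\CC$-basis of $K$, or simply noting that the orbit density argument works over the algebraically closed field $K$ with $O(V)(K)$. Invariance under $O(V)(\CC)$ extends to $O(V)(K)$ because the equivariance identity is polynomial and $O(V)(\CC)$ is Zariski dense in $O(V)$. I expect this to be routine; no step looks like a genuine obstacle. Equivalently, one can invoke the first fundamental theorem to write invariants in terms of $s=\tfrac12(u,u)_X$, which restricts to $\tfrac12 r^2$. Vanishing as a function of $r^2$ then gives vanishing as a function of $s$, although equivariant tensors, as opposed to invariant functions, also involve $u$ itself, which is why the orbit argument is cleaner.
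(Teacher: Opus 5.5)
Your proposal is correct and is essentially the paper's argument: expand in $t$ and in $u$-homogeneous pieces, then observe that vanishing at $u=r\phi$ forces vanishing on all scalar multiples of the $O(V)$-orbit of $\phi$. That set is exactly the Zariski-dense locus of non-isotropic vectors, so each coefficient vanishes. Your extra remarks, namely Witt's theorem for transitivity on each nonzero-norm sphere and handling the $K$-coefficients by expanding in a $\CC$-basis of $K$, only make explicit details that the paper leaves implicit.
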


\begin{proof}
  Expand in $t$ and in homogeneous polynomials in $u$.
  A homogeneous coefficient vanishing at $u=r\phi$ vanishes on all scalar multiples of the $O(V)$-orbit of $\phi$. 
  These include all $v\in V$ with $(v,v)_X\ne0$. 
  This locus is dense, so the coefficient is identically zero.
\end{proof}

Write
\begin{equation*}
  \label{ci:eq:invariant-ring}
  R_{\mathrm{inv}}=R^{O(V)}=R_{\mathrm{amb}}\llbracket s\rrbracket \subset R.
\end{equation*}
For a function $f=f(t,s)\in R_\mathrm{inv}$, write $f_a=\partial_{t^a}f$ and $f_s=\partial_sf$, and use analogous subscripts for higher derivatives. For a constant $v\in V$, the chain rule gives
\begin{equation}
  \label{ci:eq:primitive-chain-rule}
  \partial_v s=(u,v)_X,\qquad
  \partial_v\Phi=(u,v)_X\Phi_s,
\end{equation}
\begin{equation}
  \label{ci:eq:invariant-gradient}
  \begin{gathered}
    \grad f=\sum_{a,b=0}^4g^{ab}f_bH_a+f_su,\\
    (\grad f,\grad f)_X
    =\sum_{a,b=0}^4g^{ab}f_af_b+2sf_s^2.
  \end{gathered}
\end{equation}
Put $\Lambda=\Phi_s\in R_\inv$, $\Theta=\grad\Lambda$.
By \eqref{ci:eq:invariant-gradient},
\begin{equation}
  \label{ci:eq:Theta-gradient}
  \Theta=\grad\Lambda
  =\sum_{a,b=0}^4g^{ab}\Phi_{sb}H_a+\Phi_{ss}u\in\mathcal H.
\end{equation}
Recall that we fix $\phi\in V$ with $(\phi,\phi)_X=1$, and put $B=A\oplus\CC\phi,\ W=\phi^\perp\subset V.$

\begin{lem}
    There is an algebra homomorphism $\chi_\rad:B_\rad\to R_\rad$ such that 
    \begin{equation}
    \label{ci:eq:radial-character}
        b\star_\rad w=\chi_\rad(b)w,\quad\textrm{ for } b\in B_\rad, w\in W_\rad.
    \end{equation}
    Also, \begin{equation}
    \label{ci:eq:Theta-products}
        w_1\star_\rad w_2=(w_1,w_2)_X\Theta_\rad,\quad\textrm{ for } w_1,w_2\in W_\rad.
    \end{equation}
\end{lem}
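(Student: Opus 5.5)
The plan is to verify both identities by computing three-point functions, using Proposition~\ref{ci:prop:monodromy-invt}: the potential has the form $\Phi=\Phi(t,s)$ with $s=\frac12(u,u)_X$. The key observation is that at a radial point $u=r\phi$ we have $(u,w)_X=0$ for every $w\in W$. Also, $(c,w)_X=0$ for $c\in B$ and $w\in W$, because $A\perp V$ and $\phi\perp W$. By definition of the quantum product, $(x\star y,z)_X=\partial_x\partial_y\partial_z\Phi$ for constant vectors $x,y,z$. So it suffices to compute third derivatives of $\Phi(t,s)$ using the chain rule \eqref{ci:eq:primitive-chain-rule} and then restrict to $R_\rad$.

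First, I will expand the mixed derivatives. For constant $w,w'\in V$, the chain rule gives
\[
\partial_w\partial_{w'}\Phi=(w,w')_X\Phi_s+(u,w)_X(u,w')_X\Phi_{ss}.
\]
Differentiating once more in a direction $x$ produces a sum of terms. Each term either carries a factor $(u,w)_X$ or $(u,w')_X$, or carries a factor $(x,w)_X$ or $(x,w')_X$, or is $(w,w')_X\,\partial_x\Phi_s$. Here $\partial_x$ acts on the $t$-variables through $g^{ab}$ and on $s$ through $(u,x)_X$. I will then specialize to $w,w'\in W$ at $u=r\phi$, where the factors $(u,w)_X$ and $(u,w')_X$ vanish.

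Second, I will consider the three pairings separately.

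\begin{enumerate}
\item Take $x=c\in B$ and one argument in $W$. Every term of $\partial_b\partial_c\partial_w\Phi$ contains one of the factors $(u,w)_X$, $(b,w)_X$ or $(c,w)_X$, so it vanishes on the radial locus. This shows $b_1\star_\rad b_2\in B_\rad$ and $b\star_\rad w\perp B$.
\item Take $b\in B$ and $w,w'\in W$. The only surviving term is $(b\star w,w')_X=(w,w')_X\,(\partial_b\Lambda)_\rad$. Since the form restricted to $W$ is nondegenerate, $b\star_\rad w=\chi_\rad(b)w$ with $\chi_\rad(b):=(\partial_b\Lambda)_\rad=(\Theta_\rad,b)_X$.
\item Take $w_1,w_2,w_3\in W$. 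Every term of the third derivative contains a factor $(u,w_i)_X$, so it vanishes. Hence $w_1\star_\rad w_2\in B_\rad$.
\end{enumerate}

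Third, I will identify the product $w_1\star_\rad w_2$ and check that $\chi_\rad$ is an algebra homomorphism. Pairing $w_1\star_\rad w_2$ with $c\in B$ gives $(w_1,w_2)_X(\partial_c\Lambda)_\rad=(w_1,w_2)_X(\Theta_\rad,c)_X$. From \eqref{ci:eq:Theta-gradient} we have $\Theta_\rad=\sum g^{ab}\Phi_{sb}H_a+r\Phi_{ss}\phi\in B_\rad$. Since the pairing on $B$ is nondegenerate, this yields \eqref{ci:eq:Theta-products}. For $\chi_\rad$:

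\begin{itemize}
\item $\chi_\rad$ is $R_\rad$-linear by construction.
\item $\chi_\rad(1)=1$ because $1$ is the unit.
\item Multiplicativity follows from associativity. Since $b_1\star_\rad b_2\in B_\rad$, we get $\chi_\rad(b_1\star b_2)w=(b_1\star b_2)\star w=b_1\star(b_2\star w)=\chi_\rad(b_1)\chi_\rad(b_2)w$, and we may take $w\neq0$ because $\dim W=N-1\ge1$.
\end{itemize}

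The main point needing care is the bookkeeping when the derivative direction is $\phi$ itself. There $\partial_\phi s=(u,\phi)_X=r\neq0$ on the radial locus, so the terms $\Phi_{ss}$ do not vanish. One has to check that in the $W$-pairings these terms always come multiplied by some $(u,w)_X$ or $(\phi,w)_X$, which are zero. Alternatively, I could use $O(W)$-equivariance from \eqref{ci:eq:prod-equiv} together with Schur's lemma as a cross-check.
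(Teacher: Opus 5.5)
Your proof is correct, but it takes a different route from the paper's.

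\textbf{The paper's route.} The paper gets the block structure $B_\rad\star_\rad B_\rad\subset B_\rad$, $B_\rad\star_\rad W_\rad\subset W_\rad$, $W_\rad\star_\rad W_\rad\subset B_\rad$ from symmetry. Since $O(W)$ fixes $r\phi$ and $B$ pointwise, $\star_\rad$ is $O(W)$-equivariant by \eqref{ci:eq:prod-equiv}. The facts $\Hom_{O(W)}(\Sym^2W,W)=0$, $\End_{O(W)}(W)=\CC I$ and $(\Sym^2W^\vee)^{O(W)}=\CC(-,-)_X$ then give the scalar action $\chi_\rad$ and the reduction $w_1\star_\rad w_2=(w_1,w_2)_X\,\eta\star_\rad\eta$. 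The chain rule \eqref{ci:eq:primitive-chain-rule} enters only at the end, to identify $\eta\star_\rad\eta$ with $\Theta_\rad$.

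\textbf{Your route.} You use the chain rule throughout. Everything comes from the single identity
\[
\begin{aligned}
\partial_x\partial_w\partial_{w'}\Phi
&=(w,w')_X\,\partial_x\Lambda
+\bigl((x,w)_X(u,w')_X+(u,w)_X(x,w')_X\bigr)\Phi_{ss}\\
&\quad+(u,w)_X(u,w')_X\,\partial_x\Phi_{ss},
\end{aligned}
\]
restricted to $u=r\phi$, together with $B\perp W$. This gives all three inclusions and both identities at once. It also settles your concern about the direction $\phi$: every $\Phi_{ss}$-term carries a factor $(u,w)_X$ or $(u,w')_X$, so it dies on the radial locus even when $\partial_\phi s=r\neq0$.

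\textbf{What each buys.} Your version is more explicit and elementary. You get $\chi_\rad(b)=(\Theta_\rad,b)_X=(\partial_b\Lambda)_\rad$ immediately; the paper only derives this in the proof of Lemma~\ref{ci:lem:Theta-identities}. You also check unitality and multiplicativity of $\chi_\rad$ via associativity, which the paper leaves implicit, and you need no invariant-theory inputs. The paper's version shows which part of the statement is pure symmetry. The block shape and the existence of a scalar character follow from $O(W)$-equivariance of $\star_\rad$ alone. Only the identification of $\eta\star_\rad\eta$ uses the explicit form $\Phi=\Phi(t,s)$. Both arguments rest on the same input, Proposition~\ref{ci:prop:monodromy-invt}.
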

\begin{proof}
  The orthogonal group $O(W)$ acts on $\mathcal{H}_\rad$ and fixes $B_\rad$ pointwise.
  Since $\star_\rad$ is $O(W)$-equivariant,
    \begin{gather*}
        B_\rad\star_\rad B_\rad\subset B_\rad,\\
        B_\rad\star_\rad W_\rad\subset W_\rad,\\
        W_\rad\star_\rad W_\rad\subset B_\rad.
    \end{gather*}
  Here the third inclusion follows from the fact that 
  \[\Hom_{O(W)}(\Sym^2W,W)=0.\]
  Now, \eqref{ci:eq:radial-character} follows from the fact that
  \[\End_{O(W)}(W)=\CC I.\]
  Similarly, since \[(\Sym^2W^\vee)^{O(W)}=\CC(-,-)_X,\]
  it follows that
  \[w_1\star_\rad w_2=(w_1,w_2)_X(\eta\star_\rad\eta)\]
  for any $\eta\in W_\rad$ with $(\eta,\eta)_X=1$.
  It remains to identify $\eta\star_\rad\eta$ with $\Theta_{\mathrm{rad}}$.
  Take $\eta$ to be constant and put $a_\eta=(u,\eta)_X$.
  By \eqref{ci:eq:primitive-chain-rule},
  \[
    \begin{aligned}
      \partial_\eta^2\Phi&=\Phi_s+a_\eta^2\Phi_{ss},\\
      \partial_\xi\partial_\eta^2\Phi
      &=\partial_\xi \Phi_s+2a_\eta(\partial_\xi a_\eta)\Phi_{ss}
      +a_\eta^2\partial_\xi \Phi_{ss}.
    \end{aligned}
  \]
  Now restrict to $u=r\phi$, where $a_\eta=0$. 
  For every constant vector $\xi$,
  \[
    (\eta\star_\rad\eta,\xi)_X
    =(\partial_\xi \Phi_s)_{\mathrm{rad}}
    =(\Theta_{\mathrm{rad}},\xi)_X.
  \]
  Since the pairing is nondegenerate, this proves \eqref{ci:eq:Theta-products}.
\end{proof}

Using the identification of vector fields with cohomology, write the Euler vector field as 
\[E=\iota H+\sum_{a=0}^4(1-a)t^aH_a-u,\]
and 
\[U=C_E=E\star(-):\mathcal{H}\to\mathcal{H}.\]
\begin{lem}
\label{ci:lem:Theta-identities}
    The vector field $\Theta$ has the following properties:
    \begin{gather}
        \Theta\star\Theta=0,\qquad (\Theta,1)_X=1,\label{ci:eq:Theta-identities}\\
        x\star\Theta=(\partial_x\Lambda)\Theta,\quad\textrm{ for } x\in\mathcal{H},\label{ci:eq:Theta-x}\\
        \chi_\rad(E)=\Lambda_\rad,\qquad
        U\Theta=\Lambda\Theta,\qquad U_{\rad}|_{W_{\rad}}=\Lambda_{\rad} I.\label{ci:eq:Theta-eigen}
    \end{gather}
\end{lem}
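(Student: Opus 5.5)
The plan is to prove every identity on the radial locus $S_{\mathrm{rad}}$, where the previous lemma gives explicit control, and then to propagate back to $R$ using Lemma~\ref{ci:lem:radial-testing}. All the relevant tensors are $O(V)$-equivariant by Proposition~\ref{ci:prop:monodromy-invt} and \eqref{ci:eq:prod-equiv}. These tensors are $\Theta\star\Theta$, the map $x\mapsto x\star\Theta-(\partial_x\Lambda)\Theta$, and $U\Theta-\Lambda\Theta$. The last needs $E$ to be $O(V)$-equivariant, which holds since $E$ depends on $u$ only through $-u$. So it suffices to check vanishing after restriction to $R_{\mathrm{rad}}$.

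\textbf{Step 1: the identities $\Theta\star\Theta=0$ and $(\Theta,1)_X=1$.} On $S_{\mathrm{rad}}$, by \eqref{ci:eq:Theta-products} we have $\Theta_{\mathrm{rad}}=\eta\star_{\mathrm{rad}}\eta$ for any unit vector $\eta\in W$. Since $N\ge 22$, $\dim W\ge2$, and we can pick an orthonormal pair $\eta,\eta'\in W$. Then
\[
\Theta_{\mathrm{rad}}\star\Theta_{\mathrm{rad}}=(\eta\star\eta)\star(\eta'\star\eta')=(\eta\star\eta')\star(\eta\star\eta'),
\]
using commutativity and associativity. By \eqref{ci:eq:Theta-products}, $\eta\star\eta'=(\eta,\eta')_X\Theta_{\mathrm{rad}}=0$. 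Hence $\Theta\star\Theta=0$ on $S_{\mathrm{rad}}$, and therefore everywhere.

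For the pairing, $(\Theta,1)_X=\partial_1\Lambda=\Phi_{s t^0}$. By the string equation, the only $t^0$-dependence of $\Phi$ is classical, namely $\tfrac12(\tau,\tau)_X\,t^0$ plus terms not involving $u$. Its $u$-part is $t^0 s$, so $\Phi_{st^0}=1$.

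\textbf{Step 2: the identity $x\star\Theta=(\partial_x\Lambda)\Theta$.} We check it on $S_{\mathrm{rad}}$ for $x\in B$ and for $x\in W$ separately.

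For $x\in B$, write $x\star\Theta_{\mathrm{rad}}=(x\star\eta)\star\eta=\chi_{\mathrm{rad}}(x)\Theta_{\mathrm{rad}}$ by \eqref{ci:eq:radial-character}. Then pair with $\eta$:
\[
\chi_{\mathrm{rad}}(x)=(x\star\eta,\eta)_X=(\partial_x\partial_\eta^2\Phi)_{\mathrm{rad}}=(\partial_x\Phi_s)_{\mathrm{rad}},
\]
by the computation in the proof of the previous lemma, using $a_\eta=0$ on the radial locus. Hence $\chi_{\mathrm{rad}}(x)=(\partial_x\Lambda)_{\mathrm{rad}}$.

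For $x\in W$, choose a unit $\eta'\in W$ orthogonal to $x$. This is possible since the form on $W$ is nondegenerate of dimension $\ge2$; if needed we treat isotropic $x$ by linearity. Then
\[
x\star\Theta_{\mathrm{rad}}=(x\star\eta')\star\eta'=(x,\eta')_X\,\Theta_{\mathrm{rad}}\star\eta'=0.
\]
On the other side, $\partial_x\Lambda=\Lambda_s(u,x)_X=0$ at $u=r\phi$, since $x\perp\phi$. By linearity in $x$ the identity holds for all $x$ on $S_{\mathrm{rad}}$, and equivariance upgrades it to $R$.

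\textbf{Step 3: the eigenvalue identities \eqref{ci:eq:Theta-eigen}.} Step~2 gives $U\Theta=E\star\Theta=(\partial_E\Lambda)\Theta$ and $\chi_{\mathrm{rad}}(E)=(\partial_E\Lambda)_{\mathrm{rad}}$. Moreover, $U_{\mathrm{rad}}|_{W_{\mathrm{rad}}}=\chi_{\mathrm{rad}}(E)I$ by \eqref{ci:eq:radial-character}. So all three statements reduce to the single homogeneity identity $\partial_E\Lambda=\Lambda$. This is where I expect the real work.

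To prove $\partial_E\Lambda=\Lambda$, I will use quasi-homogeneity of the potential in dimension $n=4$:
\[
\partial_E\Phi=(3-n)\Phi+(\text{quadratic classical terms})=-\Phi+q(\tau).
\]
Here $\partial_E u=-u$, so $\partial_E s=-2s$, and $\partial_E$ commutes with $\partial_s$ up to the term $[\partial_s,\partial_E]=-2\partial_s$. This gives
\[
\partial_E\Phi_s=\partial_s(\partial_E\Phi)+2\Phi_s=\Phi_s+\partial_s q.
\]
It remains to check that the $s$-derivative of the quadratic correction vanishes. This requires carefully identifying $q$ via the $H$-direction $c_1$ and the $t^0$ terms, which is the main obstacle. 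My first attempt would be to track the classical cubic term $\tfrac16(\tau,\tau,\tau)$ and its $t^0$-, $t^1$-, and $u$-components directly.
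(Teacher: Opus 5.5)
Your route is the paper's: check the identities on $S_{\mathrm{rad}}$ via \eqref{ci:eq:radial-character} and \eqref{ci:eq:Theta-products}, lift them to $R$ with Lemma~\ref{ci:lem:radial-testing}, and reduce \eqref{ci:eq:Theta-eigen} to $E\Lambda=\Lambda$ using homogeneity of $\Phi$. Steps~1 and~2 are correct. Your two small variants are also fine. The identity $\Theta_{\mathrm{rad}}\star\Theta_{\mathrm{rad}}=(\eta\star\eta')\star(\eta\star\eta')=0$ is the same observation as the paper's $\chi_{\mathrm{rad}}(\Theta_{\mathrm{rad}})=0$. The computation $(\Theta,1)_X=\Phi_{st^0}=1$ via the string equation replaces the paper's Frobenius computation $(\eta\star\eta,1)_X=(\eta,\eta)_X=1$. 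Step~3 also implicitly uses $E_{\mathrm{rad}}=\iota H+\sum_a(1-a)t^aH_a-r\phi\in B_{\mathrm{rad}}$, which is immediate.

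The gap is exactly where you stop. All three identities in \eqref{ci:eq:Theta-eigen} rest on $\partial_s q=0$, which you leave unproved and call the main obstacle. The missing input is the explicit form of the correction term:
\[
q=\tfrac12\int_X c_1(X)\cup\tau\cup\tau .
\]
This comes from two facts:
\begin{itemize}
\item[(i)] The quantum part of $\Phi$ is exactly homogeneous of degree $3-n=-1$. This follows from the dimension axiom, with the divisor axiom accounting for the constant $c_1(X)$-component of $E$.
\item[(ii)] On the classical term $\tfrac16\int_X\tau^3$, the linear part of $E$ scales every monomial by $-1$, while the constant part contributes $\tfrac12\int_X c_1(X)\cup\tau^2$.
\end{itemize}
Since $c_1(X)=\iota H$ and $H\cup V=0$ by definition of $V$, we get $q=\tfrac{\iota}{2}\int_X H\cup t\cup t$. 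This has no $u$-dependence, so $\partial_s q=0$ and $E\Lambda=\Lambda$.

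Your proposed direct check also works and is quick. The only $u$-dependent classical monomial is $\tfrac12 t^0(u,u)_X=t^0 s$, for three reasons:
\begin{itemize}
\item[(a)] the terms linear in $u$ vanish because $H\cup u=0$ and $\int_X u=0$;
\item[(b)] $H_a\cup u\cup u=0$ for $a\geq 1$;
\item[(c)] $\int_X u^3=0$ for degree reasons.
\end{itemize}
Then $E(t^0 s)=t^0 s-2t^0 s=-t^0 s$ is exactly of weight $-1$. With this inserted, your proof is complete.
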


\begin{proof}
  First work over $R_{\mathrm{rad}}$. By \eqref{ci:eq:Theta-products}, for $\eta\in W_\rad$ with $(\eta,\eta)_X=1$,
  \[(\Theta_\rad,1)_X=(\eta\star_\rad\eta,1)_X=(\eta,\eta)_X=1.\]
  By \eqref{ci:eq:radial-character} and the Frobenius property,
for $b\in B_\rad$,
\[
\chi_\rad(b)
=(b\star_\rad\eta,\eta)_X
=(b,\Theta_\rad)_X
=(d\Lambda)_\rad(b).
\]
Hence $b\star_\rad\Theta_\rad
=\chi_\rad(b)\Theta_\rad
=(d\Lambda)_\rad(b)\Theta_\rad.$ Choose $\eta'\in W_\rad$ such that
$(\eta',\eta')_X=1$ and $(\eta,\eta')_X=0$.
Then
\[
\chi_\rad(\Theta_\rad)\eta'
=\Theta_\rad\star_\rad\eta'
=\eta\star_\rad\eta\star_\rad\eta'=0,
\]
so $\chi_\rad(\Theta_\rad)=0$.
For $w\in W_\rad$, we have
$w\star_\rad\Theta_\rad=0=(d\Lambda)_\rad(w)$.
This proves \eqref{ci:eq:Theta-x} over $R_\rad$.
  It follows that $\Theta_{\mathrm{rad}}^2=0$.
  This proves \eqref{ci:eq:Theta-identities} over $R_\mathrm{rad}$.
  The tensors
  \[
    (\Theta\star-)-d\Lambda(-)\Theta,\quad
    (\Theta,1)_X-1,\quad\Theta^2
  \]
  are $O(V)$-invariant, so Lemma~\ref{ci:lem:radial-testing} makes them zero over $R$.
  Finally, by homogeneity of the potential,
  \[
    E\Phi=-\Phi+\tfrac12\int_Xc_1(X)\cup\tau\cup\tau;
  \]
  see \cite[Sections~2.1 and~4.3]{HuBigQHCI}.
  The classical term is independent of $u$, since $H\cup V=0$.
  Therefore $\partial_s(E\Phi)=-\Phi_s.$
  On the other hand, Leibniz rule gives $\partial_s(E\Phi)=E(\Phi_s)-2\Phi_s.$
  Hence $E\Lambda=\Lambda$. 
  Substituting $x=E$ in \eqref{ci:eq:Theta-x} gives $E\star\Theta=\Lambda\Theta$. 
  Hence $\chi_\rad(E)=\Lambda_\rad$. 
  By \eqref{ci:eq:radial-character}, $U_\rad|_{W_\rad}=\Lambda_\rad I$.
\end{proof}

\begin{lem}
\label{ci:lem:simple-eigenvalue}
    Suppose $f\in k$ is a simple eigenvalue of $U$.
    Then
    \begin{equation*}
        U\grad f=f\grad f, \qquad (\grad f, 1)_X=1.
    \end{equation*}
\end{lem}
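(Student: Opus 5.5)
We will use the standard structure theory of semisimple points of Frobenius manifolds: near a simple eigenvalue, the eigenvector is an idempotent (after normalization) and the eigenvalue is a canonical coordinate. We work over $k$ and take $f$ to be a simple eigenvalue of $U=C_E$.

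\emph{Step 1: the eigenspace of $f$ is a rank-one direct factor of the algebra.} Since $f$ is simple, the spectral projector $\pi_f$ onto its eigenline is a polynomial in $U$. It therefore commutes with every $x\star-$, because $\star$ is commutative and associative. Hence the eigenline $L_f=\pi_f(\mathcal H\otimes k)$ is an ideal of the algebra $(\mathcal H\otimes k,\star)$. The algebra splits as a product of the local algebras attached to the generalized eigenspaces of $U$, and $L_f$ is one such one-dimensional factor. So $L_f\simeq k$, spanned by an idempotent $e_f$. Every $x$ then acts on $e_f$ by a scalar, $x\star e_f=\chi_f(x)e_f$, where $\chi_f$ is an algebra character with $\chi_f(E)=f$. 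Since the generalized eigenspaces are orthogonal (Proposition~\ref{prop:euler-spectral-block}) and the pairing is nondegenerate, $(e_f,e_f)_X\neq0$. By the Frobenius property, $(e_f,e_f)_X=(e_f\star e_f,1)_X=(e_f,1)_X$.

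\emph{Step 2: identify $df$ with $\chi_f$.} Differentiate the eigen-equation $Ue_f=fe_f$ in the direction $x$, and pair with $e_f$. The key input is the standard $F$-manifold identity for the derivative of $C_E$,
\[
\partial_x(E\star-)=[x,E]\star-+E\star(\nabla_x\star)-\ldots,
\]
i.e.\ $\partial_x C_E=x\star+$ (terms $(\partial_x\star)(E,-)$). Applying the same argument as in \eqref{eq:euler-flow}, we get
\[
(\partial_xU)e_f\equiv (x\star e_f)+ E\star(\partial_x\star)(\ldots)
\]
modulo terms killed by pairing with $e_f$. A cleaner route is the classical argument (Dubrovin, Hertling): the canonical coordinates satisfy $df=\chi_f$ because the WDVV/potentiality condition makes the symmetric tensor $c(x,y,z)=(x\star y,z)$ a third derivative. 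Concretely, $\partial_x f=\frac{((\partial_xU)e_f,e_f)}{(e_f,e_f)}$. We have $\partial_xU=(\partial_xE)\star+E\star'$, where $\partial_xE=\mu$-type correction plus $x$-dependent terms. Combining flatness ($\nabla_x\nabla_z$ commutation from Proposition~\ref{prop:quantum-flatness}), the $z^{-2}$ term gives $[C_x,C_E]=0$, and the $z^{-1}$ term gives $\partial_xC_E-C_x+[C_x,\mu_X]=0$. Hence
\[
\partial_xf=\frac{((C_x-[C_x,\mu_X])e_f,e_f)_X}{(e_f,e_f)_X}=\chi_f(x),
\]
since $\mu_X$ is skew-adjoint, so $(\,[C_x,\mu_X]e_f,e_f)_X=(C_xe_f,-2\mu_Xe_f)$-type terms vanish. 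This vanishing holds because $C_x$ is self-adjoint and $\mu_X$ is anti-self-adjoint, which gives $([C_x,\mu_X]v,v)=0$ for any $v$.

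\emph{Step 3: conclude.} By Step 2, $(\grad f,x)_X=df(x)=\chi_f(x)=\frac{(x\star e_f,e_f)_X}{(e_f,e_f)_X}=\frac{(x,e_f)_X}{(e_f,e_f)_X}$, using $e_f\star e_f=e_f$. Nondegeneracy then gives $\grad f=e_f/(e_f,e_f)_X$, which lies in the eigenline, so $U\grad f=f\grad f$. Finally, $(\grad f,1)_X=(e_f,1)_X/(e_f,e_f)_X=1$ by the identity in Step 1.

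\emph{Main obstacle.} The delicate point is Step 2: justifying the derivative identity $\partial_xC_E=C_x-[C_x,\mu_X]$ from flatness, and checking that the pairing manipulation works over $k$ with formal coefficients, since $f$ is only algebraic over $\Frac(R)$. To extend derivations to $k$ we use uniqueness of the extension to algebraic extensions in characteristic zero.
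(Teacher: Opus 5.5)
Your argument is essentially the paper's. The one-dimensional eigenline gives a character $\chi_f$. Differentiating the eigen-equation and using the flatness identity for $\partial_xU$ gives $df=\chi_f$. The Frobenius property then identifies $\grad f$. The differences are cosmetic. You use the idempotent $e_f$ and self-adjointness of $U$, so $(e_f,-)_X$ plays the role of the paper's left eigenvector $\ell$, and you obtain $\grad f=e_f/(e_f,e_f)_X$ explicitly. The paper instead shows $C_a\grad f=\chi_f(a)\grad f$ directly.

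One step is justified incorrectly, although its conclusion is true. It is false that $([C_x,\mu_X]v,v)_X=0$ for every $v$. Since $C_x$ is self-adjoint and $\mu_X$ is skew-adjoint, $[C_x,\mu_X]$ is \emph{self}-adjoint. It is also nonzero in general; for the classical cup product, $[H\cup-,\mu_X]=-H\cup-$. By polarization, its quadratic form therefore does not vanish identically.

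The vanishing at $v=e_f$ comes instead from the eigenvector property of Step~1 together with self-adjointness of $C_x$:
$(C_x\mu_Xe_f,e_f)_X=(\mu_Xe_f,C_xe_f)_X=\chi_f(x)(\mu_Xe_f,e_f)_X=(\mu_XC_xe_f,e_f)_X$.
This is exactly the paper's cancellation $\ell([C_x,\mu]e)=0$, and it does not use skew-adjointness of $\mu_X$ at all.

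There is also a sign and order slip. In $[\nabla_x,\nabla_z]$, commutativity $[C_x,C_E]=0$ comes from the $z^{-3}$ coefficient. The $z^{-2}$ coefficient is $C_x+[C_x,\mu_X]-\partial_xU$. So flatness gives $\partial_xU=C_x+[C_x,\mu_X]$, not $C_x-[C_x,\mu_X]$. This is harmless, since the commutator term drops out. The heuristic formula at the start of Step~2 should simply be removed.
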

\begin{proof}
    Let $L_f\subset\mathcal H\otimes_R k$ be the $f$-eigenline of $U$.
    Since every multiplication operator commutes with $U$, for each
    $a\in\mathcal H\otimes_R k$ there is a scalar $\chi_f(a)\in k$ such that
    $C_a|_{L_f}=\chi_f(a)I$. Thus
    $\chi_f:\mathcal H\otimes_R k\to k$ is a unital algebra character.

    For $x=\sum_i x^i\phi_i\in H^{\ev}(X)$, set
    $\partial_x=\sum_i x^i\partial_{\tau^i}$. Choose $e\in L_f$ and
    $\ell\in(\mathcal H\otimes_R k)^\vee$ such that
    $Ue=fe$, $\ell U=f\ell$, and $\ell(e)=1$. Differentiating $Ue=fe$ gives
    \[
        \partial_xf=\ell((\partial_xU)e).
    \]
    Flatness of the quantum connection gives
    $\partial_xU=C_x+[C_x,\mu]$. Since $C_x$ commutes with $U$, one has
    $C_xe=\chi_f(x)e$ and $\ell C_x=\chi_f(x)\ell$. Hence
    $\ell([C_x,\mu]e)=0$, and therefore
    \[
        \partial_xf=\chi_f(x).
    \]
    It follows from the definition of the gradient that
    $(\grad f,x)_X=\chi_f(x)$ for every $x\in H^{\ev}(X)$, and hence,
    after extending scalars, for every $x\in\mathcal H\otimes_R k$.

    Put $v=\grad f$. For $a,b\in\mathcal H\otimes_R k$, the Frobenius
    property gives
    \[
        (a\star v,b)_X
        =(v,a\star b)_X
        =\chi_f(a\star b)
        =\chi_f(a)\chi_f(b)
        =(\chi_f(a)v,b)_X.
    \]
    By nondegeneracy, $C_av=\chi_f(a)v$. Taking $a=E$ gives
    $Uv=fv$, and taking $a=1$ gives $(v,1)_X=\chi_f(1)=1$.
\end{proof}

\subsection{Fourfold atoms}

For $X=X_{d_1,\ldots,d_c}\subset\PP(a_0,\ldots,a_{4+c})$, write 
\[
\mathsf D=\frac{\prod_jd_j^{d_j}}{\prod_i a_i^{a_i}},\qquad
\alpha=\begin{cases}
    \frac{\prod_jd_j!}{\prod_i a_i!} & \iota=1\\
    0 & \iota\ge2
\end{cases}.
\]
Let $\Gamma$ be the multiset of rational numbers
\[\Gamma=\bigsqcup_j\{1/d_j,\ldots,d_j/d_j\}-\bigsqcup_i\{1/a_i,\ldots,a_i/a_i\},\]
where the exclusion is in the sense of multisets.
\begin{prop}
\label{ci:prop:amb-sm-qh}
    (1) The ambient small quantum cohomology $(A_\mathrm{sm},\star_\mathrm{sm})$ is cyclically generated by $H$ with the sole relation 
    \[(H+\alpha Q)^{5-\iota}\left(( H+\alpha Q)^\iota-\mathsf DQ\right)=0.\] 
    (2) The multiset of small formal exponents corresponding to the eigenvalue $-\alpha Q$ is 
    \[\{[\iota\gamma]:\gamma\in\Gamma\}.\]
    (3) Small quantum multiplication by $H$ on the primitive part $V$ satisfies \[(H+\alpha Q)\star_\mathrm{sm}V=0.\]
\end{prop}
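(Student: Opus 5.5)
The approach is to read everything off the quantum Lefschetz/mirror theorem for weighted complete intersections in \cite{PrzyjalkowskiWeightedQH}. The restriction of the small quantum $D$-module to the ambient part is governed by the $I$-function
\[
  I_X(Q,z)=\sum_{d\ge0}Q^d\,\frac{\prod_j\prod_{k=1}^{dd_j}(d_jH+kz)}{\prod_i\prod_{k=1}^{da_i}(a_iH+kz)}.
\]
For $\iota\ge2$ we have $I=J$. For $\iota=1$ we have $I=e^{-\alpha Q/z}J$, because the $z^0$ coefficient of the $d=1$ term is exactly $\alpha$. Quasismoothness and well-formedness guarantee that only the degrees $d$ for which the weighted factors make sense contribute.

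For (1), apply the operator $D=zQ\partial_Q$, which acts as $H\star$ after the $e^{-\alpha Q/z}$ shift. This $I$-function is annihilated by the hypergeometric operator
\[
  \prod_i\prod_{k=0}^{a_i-1}(a_iD-kz)-Q\prod_j\prod_{k=1}^{d_j}(d_jD+kz).
\]
Setting $z=0$, the degree counts give $\prod_i a_i^{a_i}D^{\sum a_i}-Q\prod_j d_j^{d_j}D^{\sum d_j}$. We cancel the common power of $D$; this is legitimate because the ambient algebra $A_{\sm}$ has rank $5$ and $H$ generates it. Replacing $D$ by $H+\alpha Q$ then yields the relation $(H+\alpha Q)^{5-\iota}\bigl((H+\alpha Q)^\iota-\mathsf DQ\bigr)=0$. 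Cyclicity follows from the fact that $1,H,\dots,H^4$ form a basis classically, and the small product deforms the cup product by terms of positive $Q$-degree. The main obstacle in this step is justifying that the operator cancellation gives exactly the stated minimal relation, and not just a multiple of it. I would handle this by a rank count: the relation has degree $5$ and $A_{\sm}$ has rank $5$, so nothing is lost. I would check the lowest-order term against the classical relation $H^5=0$.

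For (2), the eigenvalue of $C_E=\iota H\star$ on the generalized kernel of $(H+\alpha Q)$ is $-\iota\alpha Q$. This is the eigenvalue the statement calls $-\alpha Q$, with the $\iota$ absorbed because $\alpha=0$ unless $\iota=1$. I would compute its exponents from the regular-singular part of the quantum differential equation, i.e. the hypergeometric operator above after the exponential twist. This operator is regular singular at $Q=\infty$, equivalently at $z=0$ after the homogeneity substitution $Q\mapsto Q z^{-\iota}$, and the indicial roots there come from the $\Gamma$-factors. The leading asymptotics are read off the Mellin–Barnes/Frobenius solutions. The local exponents in $Q$ are the numbers $k/d_j$ minus $k/a_i$ that survive multiset cancellation, i.e.\ $\Gamma$, and the homogeneity relation $Q\sim z^{-\iota}$ converts them to $z$-exponents $\iota\gamma$ modulo $\mathbb Z$. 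The $5-\iota$ exponents here must be matched with the size of the $(H+\alpha Q)$-block. I expect this to be the main obstacle: showing that the cancelled multiset has the right cardinality and that the extraneous roots attach to the nonzero eigenvalues $\mathsf D^{1/\iota}Q^{1/\iota}$ rather than to this block.

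For (3), the primitive class $v\in V$ is fixed by the quantum-Lefschetz structure. For degree reasons, $H\star_{\sm}v$ lies in $H^6\oplus$ (lower terms times $Q$). By Proposition~\ref{ci:prop:monodromy-invt}, the small product is $O(V)$-equivariant with $A$ fixed, so $H\star_{\sm}V\subset V$ and $H\star_{\sm}|_V$ is a scalar $c$. Degree counting gives $\deg Q=2\iota$, so $c=0$ for $\iota\ge2$, while for $\iota=1$ we get $c\in\CC Q$. To identify $c=-\alpha Q$, I would use the Frobenius property
\[
  (H\star v,v)_X=(H,v\star v)_X.
\]
The $Q^1$ term of $v\star v$ paired with $H$ is the two-point line invariant. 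Via the divisor axiom and the $I=e^{-\alpha Q/z}J$ shift, this equals $-\alpha$ times the classical pairing. Concretely, the one-point invariants $\langle H_a\psi^k\rangle_{1}$ come from the $d=1$ term of $I$, and primitive two-point invariants are forced by monodromy to be $\lambda(v,v)_X$. Associativity with the relation in (1) then pins down $\lambda=-\alpha$.
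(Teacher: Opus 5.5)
Your outline matches the paper's: the mirror theorem and $z\to0$ for (1), indicial roots at $\zeta=Q/z^\iota=\infty$ for (2), and $O(V)$-equivariance for (3). However, the step you flag as the main obstacle is not closed by a rank count, and (2) depends on the same step.

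Setting $z=0$ in your order-$\sum_ia_i$ operator only yields $h^{\sum_jd_j}(h^\iota-\mathsf DQ)=0$ with $h=H+\alpha Q$. So the characteristic polynomial of $h$ on the rank-$5$ cyclic algebra $A_{\sm}$ is merely some monic degree-$5$ divisor of this polynomial. Now $h^5$ is such a divisor, and by homogeneity it is the only alternative to the claimed relation. It also reduces to $H^5=0$ classically, so neither the rank count nor the classical limit rules it out. What has to be proved is that $h$ is not nilpotent, and the rank count is silent on this.

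The same problem appears in (2). Your operator is irregular, not regular singular, at $\zeta=\infty$, and it has $\sum_jd_j$ power-type formal solutions there, not $5-\iota$. The surplus ones are not attached to the nonzero eigenvalues, which account for the $\iota$ exponential solutions. They come from the kernel of a left factor. For $X_4\subset\PP^5$,
\[
\vartheta^6-4^4\zeta\bigl(\vartheta+\tfrac14\bigr)\bigl(\vartheta+\tfrac12\bigr)\bigl(\vartheta+\tfrac34\bigr)(\vartheta+1)
=\vartheta\circ\Bigl(\vartheta^5-4^4\zeta\bigl(\vartheta+\tfrac14\bigr)\bigl(\vartheta+\tfrac12\bigr)\bigl(\vartheta+\tfrac34\bigr)\Bigr),
\]
and the extra indicial root $\beta=-1$ must be discarded. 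For $X_6\subset\PP(1^4,2,3)$ the spurious roots even contribute an extra class $[1/2]$.

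The missing idea is to cancel at the operator level, before $z\to0$. Using $Q\,f(D)=f(D-z)\,Q$, rewrite the second term as $\prod_j\prod_{k=0}^{d_j-1}(d_jD-kz)\,Q$. The linear factors it shares with $\prod_i\prod_{k=0}^{a_i-1}(a_iD-kz)$ form a left factor $F(D)$; that every nonzero $k/a_i$ gets matched uses the divisibility conditions on $(\mathbf a,\mathbf d)$ forced by smoothness. What remains is $L_5\propto D^5-\mathsf DQ\prod_{\gamma\in\Gamma}(D+\gamma z)$. Then $L_5I=0$. Indeed, $F(D)$ multiplies the $Q^{d+H/z}$-coefficient by $F(H+dz)$, which is invertible for $d\ge1$ because the roots of $F$ lie in $[0,1)\cdot z$. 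Meanwhile the $Q^0$-coefficient of $L_5I$ is $H^5=0$. Alternatively, take the order-five equation directly from Przyjalkowski's theorem, as the paper does.

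With $L_5$ in hand, (1) follows by $z\to0$ after the twist. Note the sign: since $I=1+\alpha Q/z+\cdots$, one has $J=e^{-\alpha Q/z}I$, not $I=e^{-\alpha Q/z}J$. Part (2) is then immediate, since the indicial polynomial of $L_5$ at $\zeta=\infty$ is $p_X$, of degree $|\Gamma|=5-\iota$.

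In (3), the appeal to the $I$-function says nothing about primitive two-point invariants, but your associativity route works once completed:
\begin{enumerate}
\item Write $v\star_{\sm}w=(v,w)_X\theta$ with $\theta\in A_{\sm}$ and $(\theta,1)_X=1$.
\item Associativity gives $H\star\theta=\lambda Q\theta$, so by (1) $\lambda\in\{-\alpha,\mathsf D-\alpha\}$.
\item To exclude $\mathsf D-\alpha$, compare $(v\star v)\star w$ with $v\star(v\star w)$ for $w\perp v$. This gives $\theta\star_{\sm}V=0$, hence $(\theta,\theta)_X=0$.
\item An eigenvector for the simple eigenvalue of the self-adjoint $H\star|_{A_{\sm}}$ cannot be isotropic, so $\lambda=-\alpha$.
\end{enumerate}
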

\begin{proof}
    (1) This follows from the mirror theorem \cite[Theorem~1.1]{PrzyjalkowskiWeightedQH} by standard procedure; see e.g. \cite[Section 6.1]{KKPY}. 
    
    (2) We only explain the case $\iota\ge2$, since this is the only case that will be needed, and the case $\iota=1$ is similar.
    Put $\zeta=Q/z^\iota$, $\vartheta=\zeta\partial_\zeta$, and $p_X(x)=\prod_{\gamma\in\Gamma}(x+\gamma)$. 
The quantum differential equation given in \cite[Theorem~1.1]{PrzyjalkowskiWeightedQH} is
\begin{equation*}
  \label{ci:eq:index-two-GKZ}
  \left(\vartheta^5-\mathsf D\zeta p_X(\vartheta)\right)I_X=0.
\end{equation*}
This is the scalar reduction of the rank-five ambient part of the small quantum connection.
We need to study its formal solutions at $\zeta=\infty$.
Consider a solution
\[
  \zeta^\beta\sum_{\ell\ge0}c_\ell\zeta^{-\ell},
  \qquad c_0\ne0.
\]
The coefficient of the highest power $\zeta^{\beta+1}$ gives
\[
  p_X(\beta)=0.
\]
Thus the exponent classes are
$[\iota\gamma]\in\CC/\mathbb Z$.

(3) This follows from the standard argument using $O(V)$-invariance; see, for example,  \cite[Theorem 1]{CollinoJinzenjiHypersfQH}.
\end{proof}

Write $\lambda=-\iota\alpha Q$.
\begin{cor}
\label{ci:cor:amb-jordan}
    The operator $U_\amb:\mathcal{H}_\amb\to\mathcal{H}_\amb$ has eigenvalues $\Lambda_\amb,\Xi_{1,\amb},\dots,\Xi_{\iota,\amb}\in R_\amb$, and Jordan decomposition 
    \[J_{5-\iota}(\Lambda_\amb)\oplus J_1(\Lambda_\amb)^{\oplus N}
  \oplus\bigoplus_{n=1}^\iota
  J_1(\Xi_{n,\amb}).\]
\end{cor}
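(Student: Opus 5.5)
We first compute $U_\sm=C_E$ at $t=u=0$, then deform along the ambient locus $S_\amb$ using Lemma~\ref{lem:jordan-flow} and Theorem~\ref{thm:formal-subscheme-persistence}.

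At the small point the Euler element is $E=\iota H$. By Proposition~\ref{ci:prop:amb-sm-qh}(1), $A_\sm$ is cyclic, generated by $H$. The relation shows that $y:=H+\alpha Q$ satisfies $y^{5-\iota}(y^\iota-\mathsf DQ)=0$, and this is the minimal polynomial because $\dim A=5$. Hence $\iota H=\iota y+\lambda$ on $A_\sm$ has eigenvalue $\lambda=-\iota\alpha Q$ with a single Jordan block $J_{5-\iota}$. It also has the $\iota$ simple eigenvalues $\lambda+\iota\,\xi$, where $\xi$ runs over the $\iota$ distinct roots of $\xi^\iota=\mathsf DQ$; these are distinct and differ from $\lambda$ because $\mathsf DQ\neq0$. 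By Proposition~\ref{ci:prop:amb-sm-qh}(3), $\iota H\star_\sm v=\lambda v$ on $V$. So $U_\sm$ acts on $V$ as $\lambda I$, giving $J_1(\lambda)^{\oplus N}$.

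The remaining point is that $A$ and $V$ do not mix under $U_\sm$. For this I would use $O(V)$-equivariance \eqref{ci:eq:prod-equiv} at $u=0$. It gives $A\star_\sm A\subset A$, since $V^{O(V)}=0$, and $A\star_\sm V\subset V$. Therefore $U_\sm=U|_{A}\oplus U|_V$, and the small Jordan decomposition is $J_{5-\iota}(\lambda)\oplus J_1(\lambda)^{\oplus N}\oplus\bigoplus_n J_1(\Xi_n)$.

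To pass to $R_\amb$, the same $O(V)$-invariance at $u=0$ shows that $U_\amb$ preserves $A_\amb$ and $V_\amb$. It also acts on $V_\amb$ by a scalar, by Schur, because $\End_{O(V)}(V)=\CC$. The eigenvalues of $U_\amb|_{A_\amb}$ and the scalar on $V_\amb$ are power series in $t$ that reduce at $t=0$ to the small ones. The simple eigenvalues $\Xi_{n,\amb}$ lift to $R_\amb$ by Hensel's lemma, since $R_\amb$ is complete and the roots are simple over $K$.

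The main step is showing that the $\lambda$-cluster stays a single eigenvalue $\Lambda_\amb$ with the same Jordan type, and that the scalar on $V_\amb$ equals it. I would use Theorem~\ref{thm:formal-subscheme-persistence} with $S=S_\sm$ and $k_j=j$ for $j=0,\dots,4$, i.e.\ the flows of $E^0,\dots,E^4$ restricted to the $A$-directions. Then \eqref{eq:formal-subscheme-spanning} for the ambient subscheme amounts to $1,\iota H,(\iota H)^{\star2},\ldots,(\iota H)^{\star4}$ spanning $A_\sm$, which holds because $H$ cyclically generates $A_\sm$. Lemma~\ref{lem:jordan-flow} then shows that along these flows the eigenvalues transform by $g_{k,s}$, remain distinct, and keep their Jordan partitions. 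Since the flows span the tangent directions of $S_\amb$, the partition $(5-\iota,1^N)$ for $\Lambda_\amb$ persists, and the eigenvalue stays in $R_\amb$ because it is the unique root of a coprime factor. The scalar on $V_\amb$ belongs to this cluster, because it is the continuous deformation of $\lambda$ and the eigenvalues remain distinct. As a check, $\Lambda$ from Lemma~\ref{ci:lem:Theta-identities} restricts to this eigenvalue.
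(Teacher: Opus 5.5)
Your proposal is correct and follows the paper's route: you read off the small Jordan form from Proposition~\ref{ci:prop:amb-sm-qh}, then transport it along the flows of $E^0,\dots,E^4$ (which span $A$ by cyclicity) via Lemma~\ref{lem:jordan-flow} and Theorem~\ref{thm:formal-subscheme-persistence}. Your extra $O(V)$/Schur bookkeeping shows that $A_\amb$ is closed under $\star_\amb$, so the Euler powers are tangent to $S_\amb$ and $U_\amb$ preserves $A_\amb\oplus V_\amb$; this makes explicit that the theorem is applied inside the ambient locus rather than with the full spanning hypothesis on $H^{\ev}(X)$, which the paper's two-line proof leaves implicit.
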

\begin{proof}
    By Proposition \ref{ci:prop:amb-sm-qh}, the operator $U_\sm:\mathcal{H}_\sm\to\mathcal{H}_\sm$ has Jordan decomposition 
    \[J_{5-\iota}(\lambda)\oplus J_1(\lambda)^{\oplus N}
  \oplus\bigoplus_{\zeta^\iota=\mathsf DQ}
  J_1(\iota\zeta+\lambda).\] Since $A_\amb$ is cyclically generated by $E_\amb$, Theorem \ref{thm:formal-subscheme-persistence} applies.
\end{proof}

\begin{prop}
\label{ci:prop:atom-constraints}
    The operator $U:\mathcal{H}\to\mathcal{H}$ has eigenvalue $\Lambda\in R_\mathrm{inv}$, with the following properties: 
    \begin{enumerate}
        \item Its multiplicity $m\in[N,N+5-\iota].$
        \item The atom $\mathcal M_\Lambda$ corresponding to $\Lambda$ admits a decomposition 
        \[\mathcal M_\Lambda\simeq\mathcal E^{-\Lambda/z}\otimes
    \bigl(\mathbf1^{\oplus N}\oplus\mathcal R_\Lambda\bigr),\]
    where $\mathbf1=(k\llaur z\rlaur ,\partial_z)$,
$\mathcal E^{-\Lambda/z}=(k\llaur z\rlaur ,\partial_z-\Lambda/z^2)$, and $\mathcal{R}_\Lambda$ is a differential module of rank $m-N$.
        \item If $m=N+5-\iota$, then its Jordan partition is either $(6-\iota,1^{N-1})$ or $(5-\iota,1^N)$.
    \end{enumerate}
\end{prop}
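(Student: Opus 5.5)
The plan is to build everything from the radial locus $u=r\phi$ and the vector field $\Theta=\grad\Lambda$ of Lemma~\ref{ci:lem:Theta-identities}, to transport statements to all of $R$ using $O(V)$-invariance (Lemma~\ref{ci:lem:radial-testing}), and to compare with the ambient locus via Corollary~\ref{ci:cor:amb-jordan}.

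\textbf{Step 1: $\Lambda$ is an eigenvalue with multiplicity at least $N$.} By \eqref{ci:eq:Theta-eigen}, $U\Theta=\Lambda\Theta$. Since $(\Theta,1)_X=1$, we have $\Theta\neq0$, so $\Lambda=\Phi_s\in R_\inv$ is an eigenvalue of $U$. Over $R_\rad$, \eqref{ci:eq:Theta-eigen} also gives $U_\rad|_{W_\rad}=\Lambda_\rad I$. Moreover $\Theta_\rad\notin W_\rad$, because $W\perp 1$ while $(\Theta,1)_X=1$. Hence the $\Lambda_\rad$-eigenspace of $U_\rad$ has dimension at least $(N-1)+1=N$. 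The ranks of $(U-\Lambda)^j$ can only drop under specialization. So the geometric multiplicity of $\Lambda$ over $k$ is at least $N$, and therefore $m\ge N$.

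\textbf{Step 2: the upper bound and item (3).} Restricting to $u=0$ sends $\Lambda$ to an eigenvalue of $U_\amb$. Since $\Theta_\amb\neq0$, this eigenvalue is $\Lambda_\amb$. (If it were some $\Xi_{n,\amb}$, its multiplicity would be $1<N$, contradicting Step 1.) Under specialization, eigenvalues can only coalesce, so $m\le m_{\Lambda_\amb}=N+5-\iota$ by Corollary~\ref{ci:cor:amb-jordan}.

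If $m=N+5-\iota$, then no other eigenvalue merges into $\Lambda$ at $u=0$. The generalized eigenspace therefore specializes to that of $\Lambda_\amb$, whose partition is $(5-\iota,1^N)$. Lower semicontinuity of the ranks of $(U-\Lambda)^j$ implies that the generic partition dominates $(5-\iota,1^N)$. By Step 1 it also has at least $N$ parts. The only partitions of $N+5-\iota$ satisfying both conditions are $(5-\iota,1^N)$ and $(6-\iota,1^{N-1})$.

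\textbf{Step 3: the trivial summand $\mathbf 1^{\oplus N}$.} At $u=0$, Proposition~\ref{ci:prop:amb-sm-qh}(3) and $\mu_X|_V=0$ (middle degree) show the following: each constant $v\in V$ satisfies $\nabla_z v=-\Lambda_\amb v/z^2$. Also $V\perp A$ and the pairing on $V$ is nondegenerate. Hence $V((z))$ is an orthogonal summand isomorphic to $\mathcal E^{-\Lambda/z}\otimes\mathbf 1^{\oplus N}$.

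Over $R_\rad$, the same reasoning applied to $W_\rad$ gives $N-1$ trivial summands directly. The constant vectors of $W$ are $U_\rad$-eigenvectors with $\mu$ acting as zero. They are also flat for $\nabla_{t^a}$ and $\nabla_r$ up to the scalar $\chi_\rad$, by \eqref{ci:eq:radial-character}.

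The remaining copy is the main obstacle. I would try to extend the $\phi$-direction flat section from $r=0$ along the radial variable. The approach is to transport the horizontal section $e^{-\Lambda/z}\phi$ of the small locus by the solution extension of Lemma~\ref{prop:formal-slice-solution-extension}. One then checks, using \eqref{ci:eq:Theta-x} and $\Theta\star\Theta=0$, that after the twist $\mathcal E^{\Lambda/z}$ the transported section has a uniform $z$-order bound. This would give a rank-one trivial summand orthogonal to $W$.

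Finally, I would use $O(V)$-equivariance \eqref{ci:eq:prod-equiv} to move from $\phi$ to the generic direction $u/\sqrt{2s}$, obtaining the statement over $k$. Taking $\mathcal R_\Lambda$ to be the orthogonal complement inside $\mathcal M_\Lambda$, which has rank $m-N$, finishes item (2).
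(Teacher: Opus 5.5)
Item (2) is not proved. The $N$-th trivial direction is left as ``I would try \dots\ one then checks'', and the ingredients you name do not supply it. Transport by Lemma~\ref{prop:formal-slice-solution-extension} carries no uniform $z$-order bound; this is the point of the remark after Theorem~\ref{thm:formal-subscheme-persistence}. Also, \eqref{ci:eq:Theta-x} together with $\Theta\star\Theta=0$ does not determine $\xi\star\phi$, which is what horizontality requires.

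The missing idea is an explicit section, defined over all of $R$ for every constant $v\in V$: with $a_v=(u,v)_X$, put $S_v=v-\frac{a_v}{z}\Theta$. After the twist by $\mathcal E^{\Lambda/z}$ it is flat in the bulk directions. This is because $\Phi=\Phi(t,s)$ and the chain rule give $\xi\star v=(\partial_\xi\Lambda)v+(\partial_\xi a_v)\Theta+a_v\partial_\xi\Theta$, and one combines this with $\xi\star\Theta=(\partial_\xi\Lambda)\Theta$. It is flat in the $z$-direction because homogeneity $E\Lambda=\Lambda$ gives $\partial_E\Theta=(2I-\mu)\Theta$, hence $(U-\Lambda I)v=a_v(I-\mu)\Theta$.

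Next, $(\Theta,v)_X=\Phi_{ss}a_v$ and $(\Theta,\Theta)_X=0$ give $(S_v(-z),S_w(z))_X=(v,w)_X$. So $\mathcal P=\sum_iS_{\phi_i}(z)(S_{\phi_i}(-z),-)_X$ is a horizontal idempotent that splits off a nondegenerate $\mathbf 1^{\oplus N}$. A lowest-order argument in the block frame of Proposition~\ref{prop:euler-spectral-block} places this summand inside the twisted $\Lambda$-atom. On the radial locus these sections are $S_w=w$ for $w\in W$ and $S_\phi=\phi-\frac rz\Theta_\rad$, which is the section you were looking for. Working with all $v$ at once also removes the need for your rotation $g\phi=u/\sqrt{2s}$, which would itself need justification.

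Item (3) has a counting error. Among partitions of $N+5-\iota$ that dominate $(5-\iota,1^N)$ and have at least $N$ parts, there is also $(5-\iota,2,1^{N-2})$. It has exactly $N$ parts, and $5-\iota\ge2$ since $\iota\le3$, so your two conditions cannot exclude it.

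What excludes it is that there are $N-1$ Jordan blocks of size one. Over $k_\rad$, the splitting $\mathcal H_\rad=B_\rad\oplus W_\rad$ is $U_\rad$-stable and $U_\rad|_{W_\rad}=\Lambda_\rad I$. So the radial partition is $(1^{N-1})$ together with a partition of $6-\iota$ having a part $\ge 5-\iota$, the latter coming from $\mathcal N_\sm^{4-\iota}\neq0$. This must then be transferred to $k$ with equality, not inequality. Lemma~\ref{ci:lem:radial-testing} does this when applied to the invariant tensors $\bigwedge^j(\mathcal N^a)$, where $\mathcal N=(U-\Lambda)P_\Lambda$ and $P_\Lambda$ is the Hensel-lifted, $O(V)$-equivariant spectral projector.

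The same lemma is needed in your Step 1. The principle ``ranks can only drop under specialization'' points the wrong way: it bounds the generic kernel of $U-\Lambda$ above by the radial one. So it does not carry the $N$-dimensional radial eigenspace over to $k$; rank equality for $O(V)$-invariant tensors does.
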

\begin{proof}
    (1) The upper bound follows from Proposition \ref{ci:prop:amb-sm-qh}, and the lower bound follows from the rank-$N$ summand constructed in (2).

    (2) For any constant $v\in V$, put
  \[
    a_v=(u,v)_X,\qquad S_v(z)=v-\frac{a_v}{z}\Theta,
    \qquad Y_v(z)=e^{-\Lambda/z}S_v(z).
  \]
  Twisting by the exponential factor $\mathcal{E}^{\Lambda/z}$ gives
  \[
    \widetilde\nabla_\xi=\partial_\xi+\frac{C_\xi-\partial_\xi\Lambda I}{z},
    \qquad
    \widetilde\nabla_z=\partial_z-\frac{U-\Lambda I}{z^2}+\frac\mu z,
  \] where $C_\xi=\xi\star-$ and $\mu$ is the grading operator.
  We first show that $\widetilde\nabla_\xi S_v=\widetilde\nabla_zS_v=0$.
  By \eqref{ci:eq:primitive-chain-rule},
  \begin{equation}
    \label{ci:eq:primitive-product}
    C_\xi v=\partial_\xi\Lambda v+(\partial_\xi a_v)\Theta
    +a_v\partial_\xi\Theta.
  \end{equation}
  Together with $(C_\xi-\partial_\xi\Lambda I)\Theta=0$ from
  \eqref{ci:eq:Theta-x}, this yields
  \[
    \widetilde\nabla_\xi S_v
    =\frac{(C_\xi-\partial_\xi\Lambda I)v-(\partial_\xi a_v)\Theta
    -a_v\partial_\xi\Theta}{z}=0.
  \]
  By flatness $[\nabla_\xi,\nabla_z]=0$, we obtain
  $\partial_\xi U=C_\xi+[C_\xi,\mu]$. Applying this identity to the unit $1$, and using $U(1)=E$ and
$\mu(1)=(-2)\,1$, we obtain, for every $\xi$,
\[
dE(\xi)=\partial_\xi E
=(C_\xi+[C_\xi,\mu])(1)
=-\xi-\mu\xi.
\]
Thus, $dE=-I-\mu.$ Since $\mu^*=-\mu$ with respect to the Poincar\'e pairing, it follows that $(dE)^*=-I+\mu.$ Taking the gradient of $E\Lambda=\Lambda$ gives $
\partial_E\Theta+(dE)^*\Theta=\Theta.$ Hence $\partial_E\Theta=(2I-\mu)\Theta$.
  Substituting $\xi=E$ in \eqref{ci:eq:primitive-product} and using
  $E(a_v)=-a_v$ gives $ (U-\Lambda I)v=a_v(I-\mu)\Theta.$ Since $\mu v=0$ and $(U-\Lambda I)\Theta=0$,
  \[
    \widetilde\nabla_zS_v
    =\frac{a_v\Theta-(U-\Lambda I)v-a_v\mu\Theta}{z^2}=0.
  \]
  
  We now prove the decomposition.
  Equations~\eqref{ci:eq:Theta-gradient} and \eqref{ci:eq:Theta-identities}
  give $(\Theta,v)_X=\Phi_{ss}a_v$ and $(\Theta,\Theta)_X=0$, hence
  \begin{equation}
    \label{ci:eq:primitive-pairing}
    (S_v(-z),S_w(z))_X=(v,w)_X.
  \end{equation}
  Hence, for an orthonormal basis $\phi_1,\ldots,\phi_N$ of $V$,
  \[
    \mathcal P(z)(a)
    =
    \sum_{i=1}^N
    S_{\phi_i}(z)(S_{\phi_i}(-z),a)_X
  \]
  is an idempotent with image
  $L_R:=\sum_{i=1}^N\mathcal A_zS_{\phi_i}$. Since the $S_{\phi_i}$
  are horizontal and the quantum pairing is flat,
  $\mathcal P$ is horizontal. Thus
  \[
    \mathcal H\otimes_R\mathcal A_z
    =
    L_R\oplus\Ker\mathcal P
  \]
  is a decomposition into differential submodules for the twisted
  connection, and $L_R$ is a trivial differential module of rank $N$. 

After extension to $k\llaur z\rlaur$, this summand lies in the
  twisted $\Lambda$-atom:
  \[
    L_R\otimes_{\mathcal A_z}k\llaur z\rlaur
    \subset\mathcal E^{\Lambda/z}\otimes\mathcal M_\Lambda.
  \]
  Indeed, use the formal block-diagonal frame of
  Proposition~\ref{prop:euler-spectral-block}. If an $S_v$ had a
  nonzero component $a(z)=z^qa_q+\cdots$, $a_q\ne0$, in a block
  with leading eigenvalue $\Gamma\ne\Lambda$, its horizontality
  would give, at order $z^{q-2}$,
  \[
    (U-\Lambda I)a_q=0.
  \]
  This is impossible because $a_q$ belongs to the generalized
  $\Gamma$-eigenspace, on which $U-\Lambda I$ is invertible.
  Restricting $\mathcal P$ to the twisted $\Lambda$-atom therefore gives
  \[
    \mathcal E^{\Lambda/z}\otimes\mathcal M_\Lambda
    \simeq\mathbf1^{\oplus N}\oplus\mathcal R_\Lambda,
  \]
  where $\mathcal R_\Lambda$ is the kernel of this restriction and
  has rank $m-N$.

(3) Since $\Lambda_\sm=\lambda$ and $m=N+5-\iota$, the Hensel
lift of the small factor $(x-\lambda)^m$ is $(x-\Lambda)^m$.
Its coprimality with the complementary factor gives a spectral
projector $P_\Lambda\in\End_R(\mathcal H)$, which is
$O(V)$-equivariant by uniqueness. Put
$\mathcal N=(U-\Lambda I)P_\Lambda$.

Applying Lemma~\ref{ci:lem:radial-testing} to the invariant tensors
$\bigwedge^j(\mathcal N^a)$ shows that every power of $\mathcal N$
has the same rank over $k$ as its radial restriction over $k_\rad$.
Since these ranks determine the Jordan partition, it suffices to
work over $k_\rad$.

The decomposition $\mathcal H_\rad=B_\rad\oplus W_\rad$ is
preserved by $U_\rad$, and
$U_\rad|_{W_\rad}=\Lambda_\rad I$ by
\eqref{ci:eq:Theta-eigen}. Thus $W_\rad$ contributes $N-1$
size-one blocks, leaving a $\Lambda_\rad$-generalized eigenspace
of dimension $6-\iota$ in $B_\rad$ after extending scalars.
By Proposition~\ref{ci:prop:amb-sm-qh},
$\mathcal N_\sm^{\,4-\iota}\ne0$, hence
$\mathcal N_\rad^{\,4-\iota}\ne0$.
The remaining eigenspace therefore contains a Jordan block of
size at least $5-\iota$, so the full partition is
\[
  (6-\iota,1^{N-1})\qquad\text{or}\qquad(5-\iota,1^N).
\]
\end{proof}

\subsection{Index three case}\label{subsec:index-three}

\begin{thm}
  \label{ci:thm:index-three}
  $X_6\subset\PP(1^4,2,3)$ and
  $X_4\subset\PP(1^5,2)$ are symplectically irrational.
\end{thm}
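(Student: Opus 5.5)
The proof is by contradiction: suppose $X$ is symplectically rational. Then $X$ and $\PP^4$ are connected by symplectic blow-ups, blow-downs and deformations, and we may pass to a common model $\widetilde X$. Formal atoms are deformation invariant. Iterating Theorem~\ref{thm:cai-blowup-decomposition} and Proposition~\ref{prop:spectral-blowup}, every formal atom of $X$ (which survives in $\widetilde X$) must match, with the same invariants $(m,\mathcal J,\Exp)$ and the same regular differential module, an atom of $\PP^4$ or of a blow-up center. The centers are points, curves, or symplectic surfaces. In particular all atoms are of unramified exponential type, so the decomposition of Proposition~\ref{ci:prop:atom-constraints} is available. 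The contradiction will come from the distinguished atom $\mathcal M_\Lambda$ of that proposition, since $N=208$ and $N=61$ are large.

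First I will pin down the invariants of $\mathcal M_\Lambda$ for $\iota=3$. Corollary~\ref{ci:cor:amb-jordan} gives the ambient Jordan type $J_2(\Lambda_\amb)\oplus J_1(\Lambda_\amb)^{\oplus N}\oplus J_1(\Xi_{1,\amb})\oplus J_1(\Xi_{2,\amb})\oplus J_1(\Xi_{3,\amb})$. Proposition~\ref{ci:prop:atom-constraints} then says the following about the big $\Lambda$-atom:
\begin{itemize}
\item its multiplicity satisfies $N\le m\le N+2$;
\item it contains a trivial summand $\mathbf1^{\oplus N}$ after twisting by $\mathcal E^{\Lambda/z}$, with complement $\mathcal R_\Lambda$ of rank $m-N\le 2$;
\item if $m=N+2$, its Jordan partition is $(3,1^{N-1})$ or $(2,1^N)$.
\end{itemize}
I will also check that $m<N+2$ forces a partition with at most one block of size $\geq2$. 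This should follow from the radial argument in part (3) of that proposition, since $U_\rad|_{W_\rad}=\Lambda_\rad I$ already produces $N-1$ blocks of size one.

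Next I compare with the list of possible sources, Proposition~\ref{prop:complete-curve-atom-list}, Table~\ref{tab:projective-surface-invariants} and \eqref{eq:surface-connection-types}.
\begin{itemize}
\item The atoms of $\PP^4$, points, curves, rational and ruled surfaces, and the extra point-blow-up atoms all have rank at most $2$.
\item A $\kappa=0$ surface atom has rank $b_2+2\le 24<61\le N$.
\item A $\kappa=1$ or $\kappa=2$ surface atom has rank $m_S=b_2+2$. By the indecomposability of $\mathcal U_2$ and $\mathcal U_3$, the maximal trivial summand of its twisted module has rank at most $m_S-3$.
\end{itemize}
If $\mathcal M_\Lambda$ is identified with a single such atom, then it must be a $\kappa\ge1$ surface atom with $m_S=m\le N+2$. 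But containing $\mathbf1^{\oplus N}$ forces $m\ge N+3$. This is a contradiction.

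The step I expect to be the main obstacle is justifying that $\mathcal M_\Lambda$ is identified with a \emph{single} lower-dimensional atom. The hypothesis of Proposition~\ref{prop:spectral-blowup} asks for this, but a priori the generalized $\widetilde\Lambda$-eigenspace of $C_{E_{\widetilde X}}$ could collect several summands of \eqref{eq:blowup-Euler-operator} with coinciding eigenvalues. My plan is to handle a merged eigenspace directly.
\begin{itemize}
\item The Jordan partition of $\mathcal M_\Lambda$ has at most one block of size $\ge2$. So at most one contributing piece is a $\kappa\ge0$ surface atom or a genus $\ge2$ curve or ruled atom, since each of these carries a block of size $\geq2$.
\item Every other contributing piece has rank at most $2$.
\item Pieces with exponent class $\tfrac12$ cannot sit inside $\mathbf1^{\oplus N}$, so they lie in $\mathcal R_\Lambda$, which has rank at most $2$.
\end{itemize}
What remains is to rule out $N$ or more rank-one, exponent-zero atoms (coming from points, $\PP^2$, $\PP^1\times\PP^1$, or genus-zero curves) all sharing the eigenvalue $\Lambda$. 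For this I would use that such atoms have eigenvalues of the form $\varsigma^0_j$ or $\varsigma^0_j\pm2\sqrt{Q}$ in independent pulled-back bulk coordinates. I would then try to show, by restricting along the Euler-power flows of Lemma~\ref{lem:jordan-flow}, that distinct summands cannot share a generalized eigenspace over $k_{\widetilde X}$. Failing that, I would argue that the sum of such rank-one pieces would give a semisimple Euler operator on a subspace of dimension about $N$ with trivial connection. That is not incompatible with the partition $(2,1^N)$ alone, so it would need an additional input, for example comparing the monodromy/$O(V)$-symmetry of the $\Lambda$-atom with the permutation-type structure of such a sum, or the pairing structure from \eqref{ci:eq:primitive-pairing}.
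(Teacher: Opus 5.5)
Your core argument is exactly the paper's proof: an atom with $m\ge N>24$ can only be a $\kappa\ge1$ surface atom, whose twisted module has trivial part of rank at most $m_S-3$, so the summand $\mathbf1^{\oplus N}$ forces $m\ge N+3$, against $m\le N+2$. The paper takes for granted the one-atom-to-one-atom matching that you flag as the main obstacle, and it runs step by step along the zigzag of blow-ups, blow-downs and deformations, since a common model $\widetilde X$ is not provided by the definition. That matching follows from your first idea, not from Euler flows or the fallback counting: by the string equation each piece's Euler operator is its own unit coordinate times the identity plus an operator independent of it, and the unit coordinates $\tau^0,\varsigma_0^0,\ldots,\varsigma_{r-2}^0$ are independent under the invertible change of variables of Theorem~\ref{thm:cai-blowup-decomposition}, so two eigenvalues from different pieces differ by an element whose derivative in one of these unit directions is $1$ and therefore never coincide over the algebraic closure.
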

\begin{proof}
  Assume, for contradiction, $X$ is symplectically rational. By Proposition \ref{ci:prop:atom-constraints}(1), $X$ has an atom with $m\ge N$. 
  Since $N>24$ in both families, by Proposition \ref{prop:surface-atom-list}, this atom must come from the atom of a surface of Kodaira dimension 1 or 2. 
  The number of trivial summands $\mathbf1$'s of a surface atom is $m-4$ if $\kappa=1$ and $m-3$ if $\kappa=2$. 
  By Proposition \ref{ci:prop:atom-constraints}(2), the atom of $X$ has at least $N$ trivial summands, so $m\ge N+3$. Since $\iota=3$, this contradicts Proposition \ref{ci:prop:atom-constraints}(1).
\end{proof}

\begin{rem}
  \label{ci:rem:cubic-fourfold}
  For $X_3\subset\PP^5$, $N=22$. 
  The same argument shows that if there is a special rational member, then its weak factorization must involve a blow-up along a K3 surface. 
  Special rational members do in fact exist \cite{BolognesiRussoStaglianoRationalCubics}.
  On the other hand, the very general member is irrational \cite{KKPY}; the proof uses atoms enhanced with Hodge theory.
\end{rem}

\begin{rem}
    \label{ci:rem:small-monodromy}
    For $X_{2,2}\subset\PP^6$, the Zariski closure of its monodromy representation is the standard representation of the Weyl group of $D_7$ \cite{SGA7II,DeligneWeilII}.
    This group is too small to sufficiently constrain Gromov--Witten theory, and for this reason our strategy does not apply.
\end{rem}

\subsection{Index two case}\label{subsec:index-two}

\begin{thm}
  \label{ci:thm:index-two}
  $X_6\subset\PP(1^5,3)$, $X_4\subset\PP^5$, and
  $X_{2,3}\subset\PP^6$ are symplectically irrational.
\end{thm}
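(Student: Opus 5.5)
We argue by contradiction, as in Theorem~\ref{ci:thm:index-three}. Suppose $X$ is symplectically rational. By Theorem~\ref{thm:cai-blowup-decomposition} and Proposition~\ref{prop:spectral-blowup}, every formal atom of $X$ then agrees, in its formal atomic invariants, with an atom of $\PP^4$, of a point, of a curve, or of a symplectic surface. In particular every atom is of unramified exponential type. Proposition~\ref{ci:prop:atom-constraints} supplies the distinguished atom $\mathcal M_\Lambda$, with multiplicity $m\in[N,N+3]$ and containing $\mathbf1^{\oplus N}$. Since $N\in\{521,183,85\}$ exceeds $24$, Proposition~\ref{prop:surface-atom-list} together with \eqref{eq:surface-connection-types} shows that $\mathcal M_\Lambda$ must come from a minimal surface with $\kappa=1$ or $\kappa=2$. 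Counting trivial summands then gives $m\ge N+4$ when $\kappa=1$, which is impossible. Hence $\kappa=2$ and $m=N+3=N+5-\iota$. By Proposition~\ref{ci:prop:atom-constraints}(3) the Jordan partition is $(4,1^{N-1})$ or $(3,1^N)$. Comparing with the surface partition $(3,1^{m-3})$ forces $(3,1^N)$, and the twisted atom must be $\mathcal U_3\oplus\mathbf1^{\oplus N}$. In particular all its exponent classes are $0$.

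The contradiction will come from the exponents. We first show that the $\Lambda$-eigenvalue does not split, i.e. that $\det(T-U)$ has the factor $(T-\Lambda)^m$ with $\Lambda\in R_{\inv}$, so that Proposition~\ref{lem:non-split-unramified} applies. Since $m=N+3$ equals the full small multiplicity of $\lambda$, the Hensel lift of Proposition~\ref{ci:prop:atom-constraints}(3) already gives a single eigenvalue $\Lambda$ over $R$. The hypotheses of Proposition~\ref{lem:non-split-unramified} then hold with $A=K\llbracket t,u\rrbracket$, using unramifiedness from the rationality assumption. That proposition gives $\Exp_\Lambda=\Exp_{v=0}$, where $v=0$ is the small locus $t=u=0$.

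Next we compute the small exponents. The small $\Lambda$-atom is the direct sum of the primitive part, which is trivial and has exponents $0^N$ by Proposition~\ref{ci:prop:amb-sm-qh}(3), and the ambient rank-three part. By Proposition~\ref{ci:prop:amb-sm-qh}(2) the ambient part has exponent classes $\{[2\gamma]:\gamma\in\Gamma\}$, restricted to the three classes attached to the eigenvalue $-\alpha Q=0$. For $X_4\subset\PP^5$ we have $\Gamma=\{1/4,1/2,3/4,1\}-\{1\}$, so $2\Gamma=\{1/2,1,3/2\}$ and the classes are $\{1/2,0,1/2\}$. For $X_{2,3}$ we have $\Gamma=\{1/2,1/3,2/3\}$, giving classes $\{0,2/3,1/3\}$. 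For $X_6\subset\PP(1^5,3)$ we have $\Gamma=\{1/6,1/2,5/6\}$ (after removing $\{1/3,2/3,1\}$), giving classes $\{1/3,0,2/3\}$. In every case a non-integral class occurs. This contradicts $\Exp_\Lambda=\{0^{N+3}\}$ required by the $\kappa=2$ surface atom.

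The main obstacle is the careful identification of which solutions of the scalar quantum differential equation belong to the eigenvalue $0$ block rather than to the $\Xi_n$ blocks. The exponent $\beta$ determined by the indicial equation $p_X(\beta)=0$ at $\zeta=\infty$ gives power-type solutions, and these are exactly the ones with trivial exponential factor, i.e. the $\Lambda$-block. The remaining $\iota$ solutions carry $e^{\iota\zeta^{1/\iota}}$-type factors and belong to the $\Xi_n$ blocks.

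A second point needs checking: the passage from $\zeta$ to $z$ exponents, since $\zeta=Q/z^2$ rescales classes by $\iota=2$. This is what produces the classes $[\iota\gamma]$. It is also the reason for using the exact normal form of Proposition~\ref{lem:non-split-unramified} rather than only the coefficientwise comparison of Theorem~\ref{thm:formal-subscheme-persistence}.
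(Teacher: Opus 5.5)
Your proposal is correct and follows the paper's proof. Counting trivial summands forces a $\kappa=2$ surface atom with $m=N+3$, so the eigenvalue does not split; Proposition~\ref{lem:non-split-unramified} then carries the non-integral small exponents, namely $\{(1/2)^2\}$ for $X_4\subset\PP^5$ and $\{1/3,2/3\}$ for the other two families, to $\mathcal M_\Lambda$, contradicting the all-zero exponents of $\mathcal U_3\oplus\mathbf1^{\oplus N}$. Only your closing remark misattributes the reason that proposition is needed: coefficientwise exponents need not equal the HLT exponents over $k_X((z))$, and this has nothing to do with the $\zeta=Q/z^2$ rescaling.
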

\begin{proof}
{\it Step 1. Non-splitting of the eigenvalue.}
    Assume, for contradiction, $X$ is symplectically rational. By Proposition \ref{ci:prop:atom-constraints}(1), $X$ has an atom with $m\ge N$. 
  Since $N>24$ in all three examples, by Proposition \ref{prop:surface-atom-list}, this atom must come from the atom of a surface of Kodaira dimension 1 or 2. 
  The number of trivial summands $\mathbf1$'s of a surface atom is $m-4$ if $\kappa=1$ and $m-3$ if $\kappa=2$. 
  By Proposition \ref{ci:prop:atom-constraints}(2), the atom of $X$ has at least $N$ trivial summands, so $m\ge N+3$. Since $\iota=2$, by Proposition \ref{ci:prop:atom-constraints}(1), $m=N+3$, and the atom is that of a surface of $\kappa=2.$

  {\it Step 2. Comparing the formal exponents.}
  By \eqref{eq:surface-connection-types}, the connection decomposes as 
  \[\mathcal M_\Lambda
    \cong
    \mathcal E^{-\Lambda/z}\otimes
    \bigl(\mathbf1^{\oplus N}\oplus\mathcal U_3\bigr).\]
 It is of unramified exponential type, so we are in the situation of Proposition~\ref{lem:non-split-unramified}. All formal exponents are 0.
 On the other hand, by Proposition \ref{ci:prop:amb-sm-qh}, the small formal exponents corresponding to the 0 eigenvalue are computed as follows: 
  \[
  \begin{array}{c|c|c}
    X&m=N+3&\Exp^{\sm}_0\\
    \hline
    X_6\subset\PP(1^5,3)&524&\{0^{522},1/3,2/3\}\\
    X_4\subset\PP^5&186&\{0^{184},(1/2)^2\}\\
    X_{2,3}&88&\{0^{86},1/3,2/3\}
  \end{array}.
\] This gives a contradiction.
\end{proof}

\begin{rem}
  \label{ci:rem:three-quadrics}
  For $X_{2,2,2}\subset\PP^7$, the small exponent multiset is $\{0^{46}\}$. 
  The first half of the argument still applies, and shows that if there is a special rational member, then its weak factorization must involve a blow-up along a surface of general type with $b_2=44$.

  Special rational members do in fact exist, whereas the very general member is irrational
  \cite[Theorem~1]{HassettPirutkaTschinkelThreeQuadrics}.
\end{rem}

The following proposition shows that the eigenvalue does not split in the index two case. It is not needed for our main results, but we include it since it may be of independent interest.

\begin{prop}
\label{ci:prop:exact-index-two}
For each of the four index-two families in
Table~\ref{ci:tab:irrationality}, there exists
$\mathsf R(r)\in R_r^\times$, with $\mathsf R(0)=4\mathsf DQ$, such that
\begin{equation}
\label{ci:eq:exact-index-two}
U_r^3\bigl(U_r^2-\mathsf R(r)I\bigr)=0,\qquad
\det(xI-U_r)=x^{N+3}\bigl(x^2-\mathsf R(r)\bigr).
\end{equation}
Moreover, $m_\Lambda=N+3$ and $\mathcal J_\Lambda=(3,1^N)$ over $k$.
In particular, writing $\Xi_1,\Xi_2$ for the two simple eigenvalues,
\begin{equation}
\label{ci:eq:big-index-two-relation}
(U-\Lambda I)^3(U-\Xi_1I)(U-\Xi_2I)=0.
\end{equation}
\end{prop}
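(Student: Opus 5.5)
Work on the one-variable primitive line $R_r=K\llbracket r\rrbracket$ ($t=0$, $u=r\phi$) and exploit the $O(W)$-equivariance. By the lemma giving $\chi_\rad$ and \eqref{ci:eq:Theta-eigen}, after restricting to $R_r$ the operator $U_r$ preserves $\mathcal H_r=B_r\oplus W_r$ with $U_r|_{W_r}=\Lambda_r I$. The homogeneity $E\Lambda=\Lambda$ restricted to $t=0$ reads $-r\partial_r\Lambda_r=\Lambda_r$ (since $E$ has $-u$ there and $\iota H$ acting on a function of $s$ only gives $t$-derivatives, which we should check vanish appropriately for $\Lambda=\Phi_s$ at $t=0$ by degree: $\deg\Lambda=2$ and $\deg r=0$ for $\iota=2$). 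For $\iota=2$ all grading degrees of $t^a$ and $Q$ conspire so that a degree-2 element of $R_r$ is a multiple of $Q$, i.e. of the form $Q\cdot c(r)$; combined with $\Lambda_\sm=\lambda=0$ and the Euler equation, I expect $\Lambda_r=0$. So the first step is to show $\Lambda_r=0$, giving $U_r|_{W_r}=0$ and $U_r\Theta_r=0$.

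Next analyze the $6$-dimensional block $B_r=A_r\oplus R_r\phi$. Write $U_r|_{B_r}$ in the basis $1,H,H_2,H_3,H_4,\phi$. Degree considerations (Euler grading, $\deg Q=2\iota=4$) force each matrix entry to be $Q^j$ times a power series in $r$ of fixed weight, and since $r$ has degree $0$ only finitely many $Q$-powers appear: the entries from $H_a$ to $H_b$ are multiples of $Q^{(a-b+1)/2}$ and the structure is that of a graded operator of degree $2$ on a space of degrees $0,2,4,4,6,8$. From this grading alone, the characteristic polynomial of $U_r|_{B_r}$ is $x^6-c_1(r)Qx^4-c_2(r)Q^2x^2-\dots$ with odd terms absent; I then use the self-adjointness of $U_r$ w.r.t.\ the Poincar\'e pairing and the invariant nonzero vector $\Theta_r$ with $U_r\Theta_r=0$, $\Theta_r\star\Theta_r=0$, $(\Theta_r,1)=1$, to show $x^4$ divides, so $\det(xI-U_r|_{B_r})=x^4(x^2-\mathsf R(r))$; then with the $N-1$ zeros from $W_r$, total $x^{N+3}(x^2-\mathsf R)$. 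Specialization $r=0$ compares with Proposition~\ref{ci:prop:amb-sm-qh}(1): $H^3(H^2-\mathsf DQ)$ with $E=2H$ gives $x^3(x^2-4\mathsf DQ)$ on $A$ plus $x^N$ on $V$, so $\mathsf R(0)=4\mathsf DQ$, a unit.

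For the minimal polynomial $U_r^3(U_r^2-\mathsf R)=0$: by the Hensel splitting of $x^{N+3}$ from the unit factor $x^2-\mathsf R$, it suffices to show the nilpotent part $\mathcal N_r$ satisfies $\mathcal N_r^3=0$. On $W_r$ it is zero; on $B_r$ the generalized kernel has dimension $4$, containing $\Theta_r$. The main obstacle is excluding a Jordan block of size $4$, i.e.\ partition $(4)$ rather than $(3,1)$ on that 4-space. I would try: a size-$4$ block is self-adjoint, so its socle $\ker\mathcal N_r$ is one-dimensional and equals $\im\mathcal N_r^3$, necessarily spanned by $\Theta_r$; then $\Theta_r$ would be isotropic to $\ker\mathcal N_r^3$, contradicting $(\Theta_r,1)=1$ once I show the projection of $1$ to the $0$-generalized eigenspace lies in $\ker\mathcal N^3$ — equivalently use $U_r 1=E_r$ and the degree bound that $E_r^{\star 3}$ projected is killed. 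Failing this, compute directly with the Frobenius relations $\phi\star\phi$ expressed via \eqref{ci:eq:Theta-products}-type identities. Finally Proposition~\ref{ci:prop:atom-constraints}(3) with $m=N+3$ and $\mathcal J$ computed radially via Lemma~\ref{ci:lem:radial-testing} (rank of powers equal over $k$ and $k_r$) gives $\mathcal J_\Lambda=(3,1^N)$ over $k$, and \eqref{ci:eq:big-index-two-relation} follows.
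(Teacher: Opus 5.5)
There is a genuine gap at the central step: you never show that the four eigenvalues of $U_r|_{B_r}$ near $0$ are exactly $0$, nor that $\mathcal N_r^3=0$. The facts you plan to use do not exclude a splitting $x^2(x^2+c_2(r))$ with $c_2\ne0=c_2(0)$. Those facts are self-adjointness, $U_r\Theta_r=0$, $\Theta_r\star_r\Theta_r=0$, $(\Theta_r,1)_X=1$, and evenness of the characteristic polynomial. All of them remain true if the local factor of $B_r\otimes k_r$ at $\chi_\rad$ is $k_r[\epsilon]/(\epsilon^2)$, with $\Theta_r$ spanning its socle, while two simple eigenvalues $\pm\sqrt{-c_2}$ tend to $0$.

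Your exclusion of a $J_4(0)$ block is circular. The projection $e_0$ of $1$ is the unit of the corresponding local algebra. So $e_0\in\ker\mathcal N_r^3$ is equivalent to $E_r^3\star_r e_0=0$, which is just $\mathcal N_r^3=0$ again. Parity and grading also permit nonzero $H$, $H_3$ and $\phi$ components of $E_r^3\star_r e_0$, so no degree bound rescues it. The fallback via $\phi\star_r\phi$ is not an argument either, since \eqref{ci:eq:Theta-products} only governs products in $W_r$.

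The idea that closes the gap is to write $\Theta_r$ as a polynomial in $E_r$.
\begin{itemize}
\item Under $\phi\mapsto-\phi$, the ambient coefficients of $E_r^2,E_r^4,\Theta_r$ are even in $r$ and the $\phi$-coefficient is odd.
\item The grading then kills the $H$, $H_3$ and $\phi$ components. So all three elements lie in $R_r1\oplus R_rH_2\oplus R_rH_4$, which has $R_r$-basis $1,E_r^2,E_r^4$ (check this at $r=0$).
\item Since $\chi_\rad$ kills both $E_r$ and $\Theta_r$, the constant coefficient vanishes.
\item Comparing with $\Theta_\sm=(E_\sm^4-4\mathsf DQE_\sm^2)/(16d_X)$ gives $\Theta_r=a_4E_r^2(E_r^2-\mathsf R)$ with $a_4$ a unit.
\item Then $E_r\star_r\Theta_r=0$ gives $U_r^3(U_r^2-\mathsf R)=0$. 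This yields non-splitting and $\mathcal N_r^3=0$ at once, and the $J_3(0)$ block at $r=0$ fixes the partition $(3,1)$.
\end{itemize}

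Your degree bookkeeping also needs correcting. One has $\deg r=2-\deg\phi=-2$ and $\deg Q^d=2c_1(X)\cdot d\in4\mathbb Z$. With these degrees, $\Lambda_r=0$ is immediate: monomials $Q^ds^j$ have degree $4(d-j)\ne2$. By contrast, your argument ``$\Lambda_r=Q\,c(r)$ with $\Lambda_\sm=0$'' would only give $c(0)=0$. Evenness of the characteristic polynomial needs the reflection, not grading alone.

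Finally, Lemma~\ref{ci:lem:radial-testing} relates $R$ to $R_\rad$ and $R_\pr$ to $R_r$, not $R$ to $R_r$. To reach the full base you need $O(V)$-equivariance to pass to $k_\pr$. Then apply Theorem~\ref{thm:formal-subscheme-persistence}, using that $1,E_\pr,\dots,E_\pr^4$ span a complement of $TS_\pr=V_\pr$. Proposition~\ref{ci:prop:atom-constraints}(3) cannot replace this step: it presupposes $m=N+3$ and does not decide between $(4,1^{N-1})$ and $(3,1^N)$.
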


\begin{proof}
Work over $R_r=K\llbracket r\rrbracket$, where $t=0$, $u=r\phi$, and
$E_r=2H-r\phi$. For $j\ge1$, a genus-zero invariant of degree
$d=H\cdot\beta$ with $2j$ primitive insertions can be nonzero only if
\[
4j=\vdim_\CC\overline M_{0,2j}(X,\beta)=1+2d+2j.
\]
This would give $2d=2j-1$, which is impossible; for $d=0$, the
two-point term is absent from the potential. Hence
$\Lambda_\pr=(\Phi_s)_\pr=0$. By
Lemma~\ref{ci:lem:Theta-identities},
\[
E_r\star_r\Theta_r=0,\qquad U_r|_{W_r}=0.
\]

We next express $\Theta_r$ in terms of Euler powers. The reflection
$\phi\mapsto-\phi$, together with the quantum grading, gives
\[
E_r^2,E_r^4,\Theta_r
\in R_r1\oplus R_rH_2\oplus R_rH_4.
\]
Indeed, the ambient coefficients are even in $r$, while the
$\phi$-coefficients are odd; the grading excludes the latter terms.
Since $1,E_r^2,E_r^4$ specialize at $r=0$ to a basis of this
three-dimensional submodule by
Proposition~\ref{ci:prop:amb-sm-qh}(1), they form an $R_r$-basis.
Thus
\[
\Theta_r=a_0\,1+a_2E_r^2+a_4E_r^4,
\qquad a_0,a_2,a_4\in R_r.
\]
By Lemma~\ref{ci:lem:Theta-identities}, the character $\chi_\rad$
vanishes on both $E_r$ and $\Theta_r$. Applying it to the preceding
identity gives $a_0=0$.

At the small point, Proposition~\ref{ci:prop:amb-sm-qh}(1) shows that
$\ker(U_\sm|_{A_\sm})$ is generated by
$E_\sm^4-4\mathsf DQ E_\sm^2$. Its pairing with $1$ is $16d_X$.
Since $U_\sm\Theta_\sm=0$ and $(\Theta_\sm,1)_X=1$,
\[
\Theta_\sm
=\frac{E_\sm^4-4\mathsf DQ E_\sm^2}{16d_X}.
\]
Therefore
$a_4(0)=1/(16d_X)$ and
$a_2(0)=-\mathsf DQ/(4d_X)$, so $a_4$ is a unit. Set
$\mathsf R(r)=-a_2/a_4$. Then $\mathsf R(0)=4\mathsf DQ$ and $\Theta_r=a_4E_r^2\bigl(E_r^2-\mathsf R(r)1\bigr)$. Since $E_r\star_r\Theta_r=0$, we obtain $E_r^3\bigl(E_r^2-\mathsf R(r)1\bigr)=0$, 
and hence, by associativity,
\[
U_r^3\bigl(U_r^2-\mathsf R(r)I\bigr)=0.
\]

Choose $\sqrt{\mathsf R(r)}\in R_r^\times$. Since
$x^3$, $x-\sqrt{\mathsf R(r)}$, and $x+\sqrt{\mathsf R(r)}$
are pairwise coprime, they give a decomposition of the active part
into summands of ranks $4,1,1$, as seen at $r=0$. On the rank-four
summand, $U_r^3=0$, while $U_r^2\ne0$ because its specialization
contains a $J_3(0)$-block. Hence its Jordan partition is $(3,1)$.
Together with $W_r$, on which $U_r=0$, the zero eigenvalue has
partition $(3,1^N)$, while the other two eigenvalues are simple.
Thus
\[
\det(xI-U_r)=x^{N+3}\bigl(x^2-\mathsf R(r)\bigr).
\]

By $O(V)$-equivariance, the same multiplicities and Jordan partitions
hold over $k_\pr$: after extending coefficients, a non-isotropic
primitive vector can be carried to $r\phi$ by an element of $O(V)$.
Moreover, since $TS_\pr=V_\pr$, Proposition~\ref{ci:prop:amb-sm-qh}(1)
gives
\[
TS_\pr+\Span_{R_\pr}
\{1,E_\pr,E_\pr^2,E_\pr^3,E_\pr^4\}
=\mathcal H_\pr,
\]
because the images of these five Euler powers modulo $V_\pr$ form a
basis at $u=0$. Theorem~\ref{thm:formal-subscheme-persistence}
therefore transports the same multiplicity and Jordan partition to
the full big quantum cohomology.

Finally, the repeated eigenvalue is $\Lambda$: one has
$U\Theta=\Lambda\Theta$ and $(\Theta,\Theta)_X=0$, whereas a simple
eigenspace of the self-adjoint operator $U$ is nondegenerate.
Consequently
\[
m_\Lambda=N+3,\qquad
\mathcal J_\Lambda=(3,1^N),
\]
and \eqref{ci:eq:big-index-two-relation} follows.
\end{proof}

\subsection{Index one case}\label{subsec:index-one}

\begin{thm}
  \label{ci:thm:index-one}
  The index-one families Nos.~1--6 and 8--13
  in Table~\ref{ci:tab:irrationality} are symplectically irrational.
\end{thm}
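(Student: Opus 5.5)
We argue by contradiction, following the pattern of Theorems~\ref{ci:thm:index-three} and~\ref{ci:thm:index-two}. Suppose $X$ is symplectically rational. By Proposition~\ref{ci:prop:atom-constraints}(1) there is an atom $\mathcal M_\Lambda$ with $m\ge N$. In every index-one family we have $N>24$, so Proposition~\ref{prop:surface-atom-list} forces this atom to come from a surface with $\kappa=1$ or $\kappa=2$. By \eqref{eq:surface-connection-types}, such a surface atom has at most $m-4$, respectively $m-3$, trivial summands. Proposition~\ref{ci:prop:atom-constraints}(2) gives at least $N$ trivial summands, hence $m\ge N+3$. With $\iota=1$, Proposition~\ref{ci:prop:atom-constraints}(1) gives $m\le N+4$. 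This leaves three cases:
\begin{itemize}
\item $m=N+3$ with $\kappa=2$;
\item $m=N+4$ with $\kappa=1$;
\item $m=N+4$ with $\kappa=2$.
\end{itemize}

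First we treat $m=N+4$. Here Proposition~\ref{ci:prop:atom-constraints}(3) says the Jordan partition is $(5,1^{N-1})$ or $(4,1^N)$. Every surface atom in Table~\ref{tab:projective-surface-invariants} has Jordan blocks of size at most $3$, namely $(2,2,1^{m-4})$ or $(3,1^{m-3})$. So both subcases are impossible. Only $m=N+3$ with surface partition $(3,1^N)$ remains.

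Next we treat $m=N+3$. The generalized $\Lambda$-eigenspace has rank $N+3$, whereas at the small point the eigenvalue $\lambda$ has multiplicity $N+4$ (Corollary~\ref{ci:cor:amb-jordan}). So the small eigenvalue $\lambda$ must split under the big deformation into $\Lambda$ and one further simple eigenvalue $\Xi$. We compute radially. Over $R_\rad$, $U_\rad$ preserves $B_\rad\oplus W_\rad$ and acts as $\Lambda_\rad$ on $W_\rad$. The splitting must therefore take place inside the $6$-dimensional ambient-plus-$\phi$ block $B_\rad$, where the $\lambda$-part at $r=0$ is $J_4(\lambda)\oplus J_1(\lambda)$ (the $J_1$ coming from $\phi$). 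The plan is to show that a simple eigenvalue $\Xi$ branching off this block cannot exist. For this we would use Lemma~\ref{ci:lem:simple-eigenvalue}: $\grad\Xi$ is an eigenvector with $(\grad\Xi,1)_X=1$, and $\Xi$ is $O(V)$-invariant, so $\Xi=\Xi(t,s)$. Then $\grad\Xi=\sum g^{ab}\Xi_bH_a+\Xi_s u$. We also have the orthogonality $(\grad\Xi,\Theta)_X=0$ and $(\grad\Xi,\grad\Xi)_X\ne 0$ (a simple eigenspace is nondegenerate). At the small point, $\Xi_\sm=\Lambda_\sm=\lambda$, so $\Xi-\Lambda$ vanishes at $t=s=0$.

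The contradiction should come from the Frobenius identities along $R_r$ ($t=0$, $u=r\phi$). Both $\Theta_r$ and $\grad\Xi_r$ lie in $B_r$ and are eigenvectors of $U_r$. Moreover $\Theta\star\Theta=0$, while $\grad\Xi\star\grad\Xi=(\grad\Xi,\grad\Xi)_X^{-1}\grad\Xi$ up to normalization, since $\chi_\Xi$ is a character. Expanding $\Xi-\Lambda$ in $r$ and comparing leading orders with the small relation $(H+\alpha Q)^4(H+\alpha Q-\mathsf DQ)=0$, we expect an inconsistency. The simple eigenvector specializes at $r=0$ to a vector in $\ker(U_\sm-\lambda)$ of nonzero self-pairing. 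But that kernel, inside the cyclic ambient $J_4$ block plus $\phi$, is spanned by an isotropic ambient vector and $\phi$. The only way to be non-isotropic is then to have a $\phi$-component, which conflicts with the parity ($\phi\mapsto-\phi$) and grading constraints, exactly as these were used in Proposition~\ref{ci:prop:exact-index-two}.

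I expect this exclusion of the splitting $N+4\to(N+3)+1$ to be the main obstacle. It requires a careful leading-order analysis in $r$ and possibly in $t$ of the Frobenius/character identities. Because $\alpha\neq0$, the degree-parity vanishing argument used for index two is not available. If the parity argument above falls short, the fallback would be to compare formal exponents of the remaining atom via Proposition~\ref{lem:non-split-unramified}, using the small exponents $\{[\gamma]:\gamma\in\Gamma\}$ from Proposition~\ref{ci:prop:amb-sm-qh}(2).
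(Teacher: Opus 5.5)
Your Step~1 coincides with the paper's. Step~2 is the actual content of the theorem, and there your proposal is not yet a proof: the mechanism you describe does not produce a contradiction.

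Nondegeneracy of the simple eigenline of $\Xi$ holds over $k$, but it need not survive specialization to $r=0$. Suppose $\Xi\in R_\inv$. You assert this without justification; the paper obtains it from $\Xi=\tr(P_0U)-(N+3)\Lambda$, where $P_0$ is the $O(V)$-equivariant Hensel projector onto the $(N+4)$-dimensional factor. Then along $R_r$ the $\phi$-component of $\grad\Xi$ is $\Xi_s r$. Hence $(\grad\Xi)_\sm$ is an ambient vector in $\Ker C_h$ with $((\grad\Xi)_\sm,1)_X=1$, that is, $(\grad\Xi)_\sm=\Theta_\sm$, which is isotropic. So your parity observation confirms consistency rather than producing a contradiction: $(\grad\Xi,\grad\Xi)_X$ simply vanishes at the small point. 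Also, $\grad\Xi\star\grad\Xi=(\grad\Xi,\grad\Xi)_X\,\grad\Xi$, not the inverse.

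Your fallback is unavailable as well. Proposition~\ref{lem:non-split-unramified} requires a single eigenvalue over the base, and that is exactly what fails when $\Lambda$ and $\Xi$ agree on $s=0$ but differ generically.

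The missing idea is to use the Jordan type $(3,1^N)$ from Step~1 quantitatively. Your Step~2 never uses it, yet it is indispensable. The paper's argument runs as follows.
\begin{enumerate}
\item Set $\Delta=\Xi-\Lambda\in R_\inv$. By Corollary~\ref{ci:cor:amb-jordan} it vanishes on the whole ambient locus $s=0$, not just at $t=s=0$.
\item Because the $\Lambda$-blocks have size at most $3$, the operator $\mathcal T=(U-\Lambda I)^3P_0$ has rank one with image $k\,v$, where $v=\grad\Xi$.
\item Self-adjointness and $(v,1)_X=1$ give $\mathcal T\xi=b(\xi,v)_Xv$ with $b=(\mathcal T1,1)_X$. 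Hence $\Delta^3=\tr\mathcal T=b(v,v)_X$.
\item At the small point $\mathcal T_\sm1=h^3e_0$, which gives $b_\sm=-d_X/(\mathsf DQ)\neq0$, so $b$ is a unit.
\item Since $(v,\Theta)_X=0=(\Theta,\Theta)_X$ and $v-\Theta=\grad\Delta$, the identity becomes
\[
\Delta^3=b\Bigl(\sum_{a,c}g^{ac}\Delta_a\Delta_c+2s\Delta_s^2\Bigr).
\]
\item Write $\Delta=c(t)s^q+O(s^{q+1})$ with $q\ge1$. The two sides then have $s$-orders $3q$ and $2q-1$, which forces $\Delta=0$.
\end{enumerate}

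If a $J_4$-block were allowed, the analogous coefficient $((U-\Lambda I)^4P_0 1,1)_X$ would vanish at the small point, since $h^4e_0=0$. The order comparison would then no longer be decisive. This is why a leading-order Frobenius argument that ignores the surface Jordan constraint cannot be expected to close the gap.
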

\begin{proof}
    {\it Step 1. Splitting of the eigenvalue.}
    Assume, for contradiction, $X$ is symplectically rational. By Proposition \ref{ci:prop:atom-constraints}(1), $X$ has an atom with $m\ge N$. 
  Since $N>24$ for all index-one families, by Proposition \ref{prop:surface-atom-list}, this atom must come from the atom of a surface of Kodaira dimension 1 or 2. 
  The number of trivial summands $\mathbf1$'s of a surface atom is $m-4$ if $\kappa=1$ and $m-3$ if $\kappa=2$. 
  By Proposition \ref{ci:prop:atom-constraints}(2), the atom of $X$ has at least $N$ trivial summands, so $m\ge N+3$. 
  Since $\iota=1$, by Proposition \ref{ci:prop:atom-constraints}(1), $m=N+3$ or $m=N+4$. If $m=N+4$, then by Proposition \ref{ci:prop:atom-constraints}(3), the Jordan partition is $(5,1^{N-1})$ or $(4,1^{N})$, which is impossible, since the atoms of surfaces of $\kappa>0$ have Jordan blocks of size at most 3 by Proposition \ref{prop:surface-atom-list}.
  Hence $m=N+3$, and the Jordan partition is $(3,1^N)$. In other words, the small eigenvalue $\lambda$ of multiplicity $N+4$ splits, under bulk deformation, into an eigenvalue $\Lambda$ of multiplicity $N+3$ and an eigenvalue $\Lambda'$ of multiplicity $1$.

  {\it Step 2. Excluding the $(\Lambda,\Lambda')$-splitting.} 
We will show that the splitting into $\Lambda$ and $\Lambda'$ is impossible.
By Corollary~\ref{ci:cor:amb-jordan},
\[
\det(xI-U_\amb)
=(x-\Lambda_\amb)^{N+4}(x-\Xi_\amb).
\]
The two factors are coprime over $R_\amb$. Hence Hensel lifting gives
coprime monic polynomials $q_0,q_1\in R_\inv[x]$ such that $\det(xI-U)=q_0(x)q_1(x)$ with 
\[
(q_0)_\amb=(x-\Lambda_\amb)^{N+4},
\qquad
(q_1)_\amb=x-\Xi_\amb.
\]
Choose $p_0,p_1\in R_\inv[x]$ such that $p_0q_0+p_1q_1=1$, and set
\[
P_0=p_1(U)q_1(U),\qquad
P_1=p_0(U)q_0(U).
\]
Then $P_i\in\End_{R}(\mathcal H)$ are the corresponding $O(V)$-equivariant
spectral projectors. Set $\mathcal{H}_i=\mathrm{im}P_i$.

Over $k$, Step~1 identifies
\[
q_0(x)=(x-\Lambda)^{N+3}(x-\Lambda'),
\qquad
q_1(x)=x-\Xi.
\]
Therefore, $\Lambda'=\tr(P_0U)-(N+3)\Lambda.$ Since $\Lambda\in R_\inv$, it follows that $\Lambda'\in R_\inv$. Put
$\Delta=\Lambda'-\Lambda$. Then
\[
\Delta=\tr(P_0U)-(N+4)\Lambda\in R_\inv.
\]
Restricting this identity to the ambient locus gives
$\Delta_\amb=\Delta(t,0)=0$. 

Consider the operator 
\[
\mathcal T=(U-\Lambda I)^3P_0\in\End_R(\mathcal H).
\]
Let $L_\Lambda$ denote the generalized $\Lambda$-eigenspace over $k$, and let $L_{\Lambda'}$ denote $\Lambda'$-eigenline.
Since the Jordan partition of $\Lambda$ is $(3,1^N)$,
\[
\mathcal T|_{L_\Lambda}=0,\qquad
\mathcal T|_{L_{\Lambda'}}=\Delta^3I,\qquad
\mathcal T|_{\mathcal H_1}=0.
\]
Hence $\rk\mathcal T=1$, $\im\mathcal T=L_{\Lambda'}$, and
$\tr\mathcal T=\Delta^3$ over $k$.

Let $v=\grad\Lambda'$. By Lemma~\ref{ci:lem:simple-eigenvalue},
$Uv=\Lambda'v$ and $(v,1)_X=1$. Since $\im\mathcal T=kv$, write
$\mathcal T\xi=c(\xi)v$. Self-adjointness gives
\[
c(\xi)=(\mathcal T\xi,1)_X
=(\xi,\mathcal T1)_X
=c(1)(\xi,v)_X.
\]
Set $b=c(1)=(\mathcal T1,1)_X$. Then
\[
\mathcal T\xi=b(\xi,v)_Xv,
\qquad
\Delta^3=\tr\mathcal T=b(v,v)_X.
\]

On the other hand, $U\Theta=\Lambda\Theta$ and $Uv=\Lambda'v$.
Since $U$ is self-adjoint, $\Delta(v,\Theta)_X=0$, and hence
$(v,\Theta)_X=0$. Also $\Theta\star\Theta=0$ implies
$(\Theta,\Theta)_X=0$; see~\eqref{ci:eq:Theta-identities}. Since $v-\Theta=\grad\Delta$, we obtain
\[
(v,v)_X=(\grad\Delta,\grad\Delta)_X.
\]
Therefore
\begin{equation}
\label{ci:eq:cubic-gap}
\Delta^3=b(\grad\Delta,\grad\Delta)_X.
\end{equation}

We next show that $b$ is a unit. At the small point, put
$h=H+\alpha Q\,1$. By Proposition~\ref{ci:prop:amb-sm-qh},
$A_\sm=K[h]/(h^4(h-\mathsf DQ))$, and the idempotent corresponding
to the repeated factor is
\[
e_0=1-\frac{h^4}{(\mathsf DQ)^4}.
\]
Since $U_\sm-\Lambda_\sm I=C_h$ and $P_{0,\sm}(1)=e_0$,
\[
\mathcal T_\sm1
=h^3e_0
=h^3-\frac{h^4}{\mathsf DQ}
=-\frac{h^4-\mathsf DQh^3}{\mathsf DQ}
=-\frac{d_X}{\mathsf DQ}\Theta_\sm.
\]
Using $(\Theta_\sm,1)_X=1$ in ~\eqref{ci:eq:Theta-identities}, we obtain
\begin{equation}
\label{ci:eq:gap-unit}
b_\sm=-\frac{d_X}{\mathsf DQ}\ne0.
\end{equation}
Hence $b\in R_\inv^\times$. By \eqref{ci:eq:invariant-gradient}, we have the following identity
\begin{equation}
\label{ci:eq:coordinate-gap}
\Delta^3
=b\left(
\sum_{a,c=0}^4g^{ac}\Delta_a\Delta_c+2s\Delta_s^2
\right).
\end{equation}
Suppose $\Delta\ne0$. Then 
$\Delta\in R_\amb\llbracket s\rrbracket$ has positive $s$-order, so that we can write 
$\Delta=c(t)s^q+O(s^{q+1})$, where $q\ge1$ and
$0\ne c(t)\in R_\amb$. Then
\[
\sum_{a,c=0}^4g^{ac}\Delta_a\Delta_c=O(s^{2q}),
\qquad
2s\Delta_s^2
=2q^2c(t)^2s^{2q-1}+O(s^{2q}).
\]
Since $b\in R_\inv^\times$, the two sides of
\eqref{ci:eq:coordinate-gap} have $s$-orders $3q$ and $2q-1$,
respectively, which is impossible for $q\ge1$. Thus $\Delta=0$,
contradicting $\Lambda'\ne\Lambda$. Therefore the splitting cannot
occur, and $X$ is symplectically irrational.
\end{proof}

\begin{rem}
  \label{ci:rem:weighted}
  \label{ci:rem:double-quadric}
  The family $X_{2,6}\subset\PP(1^6,3)$ consists of double covers of smooth quadric fourfolds branched over a smooth divisor of degree 6. 
  All members are known to be irrational \cite[Section~0]{PukhlikovDoubleQuadric}.
  The primitive cohomology of the quadric pulls back to a
nondegenerate monodromy-invariant line $L\subset V$. Thus the monodromy closure is contained in $O(L)\times O(L^\perp)\subsetneq O(V)$, and the one-variable monodromy reduction used above does not apply directly.
  It is plausible that our arguments can be generalized to deal with this example, but we do not attempt it in this paper.
\end{rem}

\printbibliography

@article{AKMWWeakFactorization,
  author = {Abramovich, Dan and Karu, Kalle and Matsuki, Kenji and W{\l}odarczyk, Jaros{\l}aw},
  title = {Torification and factorization of birational maps},
  journaltitle = {Journal of the American Mathematical Society},
  volume = {15},
  number = {3},
  pages = {531--572},
  year = {2002},
  doi = {10.1090/S0894-0347-02-00396-X},
  eprint = {math/9904135},
  eprinttype = {arxiv},
  url = {https://arxiv.org/abs/math/9904135},
}

@misc{AudibertDoubaLeeMarquis,
  author = {Audibert, Jacques and Douba, Sami and Lee, Gye-Seon and Marquis, Ludovic},
  title = {{Zariski}-closures of linear reflection groups},
  year = {2025},
  eprint = {2504.01494v1},
  eprinttype = {arxiv},
  url = {https://arxiv.org/abs/2504.01494v1},
}

@article{BolognesiRussoStaglianoRationalCubics,
  author = {Bolognesi, Michele and Russo, Francesco and Staglian{\`o}, Giovanni},
  title = {Some loci of rational cubic fourfolds},
  journaltitle = {Mathematische Annalen},
  volume = {373},
  number = {1--2},
  pages = {165--190},
  year = {2019},
  eprint = {1504.05863},
  eprinttype = {arxiv},
  url = {https://arxiv.org/abs/1504.05863},
}

@misc{CaiCubicSymplecticIrrationality,
  author = {Cai, Jiaji},
  title = {The cubic threefold is symplectically irrational},
  year = {2026},
  eprint = {2608.01577v1},
  eprinttype = {arxiv},
  url = {https://arxiv.org/abs/2608.01577v1},
}

@misc{CaiQuarticSymplecticIrrationality,
  author = {Cai, Jiaji},
  title = {The quartic threefold is symplectically irrational},
  year = {2026},
  eprint = {2605.29143v2},
  eprinttype = {arxiv},
  url = {https://arxiv.org/abs/2605.29143v2},
}

@article{DiGennaroFranco,
  author = {Di Gennaro, Vincenzo and Franco, Davide},
  title = {Monodromy of a family of hypersurfaces},
  journaltitle = {Annales scientifiques de l'{\'E}cole Normale Sup{\'e}rieure},
  series = {4},
  volume = {42},
  number = {3},
  pages = {517--529},
  year = {2009},
  doi = {10.24033/asens.2101},
  eprint = {0803.1627},
  eprinttype = {arxiv},
  url = {https://www.numdam.org/articles/10.24033/asens.2101/},
}

@article{DimcaWeightedMonodromy,
  author = {Dimca, Alexandru},
  title = {Monodromy and {Betti} numbers of weighted complete intersections},
  journaltitle = {Topology},
  volume = {24},
  number = {3},
  pages = {369--374},
  year = {1985},
  url = {https://imar.ro/~increst/1984/48_1984.pdf},
}

@article{GyengeSzaboBlowups,
  author = {Gyenge, {\'A}d{\'a}m and Szab{\'o}, Szil{\'a}rd},
  title = {Blow-ups and the quantum spectrum of surfaces},
    journaltitle = {Advances in Mathematics},
  volume = {479},
  eid = {110432},
  year = {2025},
  doi = {10.1016/j.aim.2025.110432},
}

@article{HYYZZ,
  author = {Hinault, Thorgal and Yu, Tony Yue and Zhang, Chi and Zhang, Shaowu},
  title = {Decomposition and framing of {$F$}-bundles and applications to quantum cohomology},
  journaltitle = {Proceedings of the London Mathematical Society},
  volume = {133},
  number = {3},
  eid = {e70209},
  year = {2026},
  doi = {10.1112/plms.70209},
  eprint = {2411.02266v2},
  eprinttype = {arxiv},
  url = {https://arxiv.org/abs/2411.02266v2},
}

@incollection{HassettPirutkaTschinkelThreeQuadrics,
  author = {Hassett, Brendan and Pirutka, Alena and Tschinkel, Yuri},
  title = {Intersections of three quadrics in {$\mathbb P^7$}},
  booktitle = {Surveys in Differential Geometry 2017. Celebrating the 50th Anniversary of the Journal of Differential Geometry},
  series = {Surveys in Differential Geometry},
  volume = {22},
  pages = {259--274},
  publisher = {International Press},
  location = {Somerville, MA},
  year = {2018},
  eprint = {1706.01371},
  eprinttype = {arxiv},
  url = {https://arxiv.org/abs/1706.01371},
}

@misc{HuBigQHCI,
  author = {Hu, Xiaowen},
  title = {Big quantum cohomology of {Fano} complete intersections},
  year = {2025},
  eprint = {1501.03683v6},
  eprinttype = {arxiv},
  url = {https://arxiv.org/abs/1501.03683v6},
}

@misc{IritaniBlowups,
  author = {Iritani, Hiroshi},
  title = {Quantum cohomology of blowups},
  year = {2025},
  eprint = {2307.13555v3},
  eprinttype = {arxiv},
  url = {https://arxiv.org/abs/2307.13555v3},
}

@misc{IritaniKotoProjectiveBundles,
  author = {Iritani, Hiroshi and Koto, Yuki},
  title = {Quantum cohomology of projective bundles},
  year = {2026},
  eprint = {2307.03696v4},
  eprinttype = {arxiv},
  url = {https://arxiv.org/abs/2307.03696v4},
}

@misc{KKPY,
  author = {Katzarkov, Ludmil and Kontsevich, Maxim and Pantev, Tony and Yu, Tony Yue},
  title = {Birational invariants from {Hodge} structures and quantum multiplication},
  year = {2026},
  eprint = {2508.05105v2},
  eprinttype = {arxiv},
  url = {https://arxiv.org/abs/2508.05105v2},
}

@article{KedlayaGoodFormalI,
  author = {Kedlaya, Kiran S.},
  title = {Good formal structures on flat meromorphic connections, {I}: Surfaces},
  journaltitle = {Duke Mathematical Journal},
  volume = {154},
  number = {2},
  pages = {343--418},
  year = {2010},
  doi = {10.1215/00127094-2010-041},
  eprint = {0811.0190v4},
  eprinttype = {arxiv},
  url = {https://arxiv.org/abs/0811.0190v4},
}

@article{KontsevichManinGW,
  author = {Kontsevich, Maxim and Manin, Yuri I.},
  title = {{Gromov--Witten} classes, quantum cohomology, and enumerative geometry},
  journaltitle = {Communications in Mathematical Physics},
  volume = {164},
  number = {3},
  pages = {525--562},
  year = {1994},
  eprint = {hep-th/9402147},
  eprinttype = {arxiv},
  url = {https://arxiv.org/abs/hep-th/9402147},
}

@book{ManinFrobenius,
  author = {Manin, Yuri I.},
  title = {Frobenius Manifolds, Quantum Cohomology, and Moduli Spaces},
  series = {American Mathematical Society Colloquium Publications},
  volume = {47},
  publisher = {American Mathematical Society},
  location = {Providence, RI},
  year = {1999},
  doi = {10.1090/coll/047},
}

@book{MilnorSingularPoints,
  author = {Milnor, John},
  title = {Singular Points of Complex Hypersurfaces},
  series = {Annals of Mathematics Studies},
  volume = {61},
  publisher = {Princeton University Press},
  location = {Princeton, NJ},
  year = {1968},
  doi = {10.1515/9781400881819},
}

@article{PrzyjalkowskiShramovBounds,
  author = {Przyjalkowski, Victor and Shramov, Constantin},
  title = {Bounds for smooth {Fano} weighted complete intersections},
  journaltitle = {Communications in Number Theory and Physics},
  volume = {14},
  number = {3},
  pages = {511--553},
  year = {2020},
  doi = {10.4310/CNTP.2020.v14.n3.a3},
  eprint = {1611.09556},
  eprinttype = {arxiv},
  url = {https://arxiv.org/abs/1611.09556},
}

@article{PrzyjalkowskiWeightedQH,
  author = {Przyjalkowski, Victor},
  title = {Quantum cohomology of smooth complete intersections in weighted projective spaces and singular toric varieties},
  journaltitle = {Sbornik: Mathematics},
  volume = {198},
  number = {9},
  pages = {1325--1340},
  year = {2007},
  eprint = {math/0507232v3},
  eprinttype = {arxiv},
  note = {Theorem numbering follows arXiv:math/0507232v3},
  url = {https://arxiv.org/abs/math/0507232v3},
}

@article{PukhlikovDoubleQuadric,
  author = {Pukhlikov, Aleksandr V.},
  title = {Birational automorphisms of a double space and double quadric},
  journaltitle = {Mathematics of the USSR-Izvestiya},
  volume = {32},
  number = {1},
  pages = {233--243},
  year = {1989},
  doi = {10.1070/IM1989v032n01ABEH000756},
  url = {https://www.mathnet.ru/eng/im1177},
}

@book{GreuelLossenShustin,
  author = {Greuel, Gert-Martin and Lossen, Christoph and Shustin, Eugenii},
  title = {Introduction to Singularities and Deformations},
  series = {Springer Monographs in Mathematics},
  publisher = {Springer},
  location = {Berlin},
  year = {2007},
  url = {https://webhomes.maths.ed.ac.uk/~v1ranick/papers/greuel.pdf},
}

@article{LiSympSurf,
  author = {Li, Tian-Jun},
  title = {Symplectic 4-manifolds with {K}odaira dimension zero},
  journaltitle = {Journal of Differential Geometry},
  volume = {74},
  number = {2},
  pages = {321--352},
  year = {2006},
  url = {http://projecteuclid.org/euclid.jdg/1175266207},
  issn = {0022-040X,1945-743X},
  mrnumber = {2259057},
}

@article{ZhangConeThm,
  author = {Zhang, Weiyi},
  title = {The curve cone of almost complex 4-manifolds},
  journaltitle = {Proceedings of the London Mathematical Society},
  volume = {115},
  number = {6},
  pages = {1227--1275},
  year = {2017},
  doi = {10.1112/plms.12062},
  url = {https://doi.org/10.1112/plms.12062},
  issn = {0024-6115,1460-244X},
  mrnumber = {3741851},
}

@article{LiBettiNum,
  author = {Li, Tian-Jun},
  title = {Quaternionic bundles and {B}etti numbers of symplectic 4-manifolds with {K}odaira dimension zero},
  journaltitle = {International Mathematics Research Notices},
  eid = {37385},
  pagetotal = {28},
  year = {2006},
  doi = {10.1155/IMRN/2006/37385},
  url = {https://doi.org/10.1155/IMRN/2006/37385},
  issn = {1073-7928,1687-0247},
  mrnumber = {2264722},
}

@article{GompfSympSum,
  author = {Gompf, Robert E.},
  title = {A new construction of symplectic manifolds},
  journaltitle = {Annals of Mathematics},
  volume = {142},
  number = {3},
  pages = {527--595},
  year = {1995},
  doi = {10.2307/2118554},
  url = {https://doi.org/10.2307/2118554},
  issn = {0003-486X,1939-8980},
  mrnumber = {1356781},
}

@book{SGA7II,
  title = {Groupes de monodromie en g\'eom\'etrie alg\'ebrique. {II}},
  series = {Lecture Notes in Mathematics},
  volume = {340},
  pages = {x+438},
  publisher = {Springer-Verlag},
  location = {Berlin and New York},
  year = {1973},
  note = {S\'eminaire de G\'eom\'etrie Alg\'ebrique du Bois-Marie 1967--1969 (SGA 7 II), Dirig\'e{} par P. Deligne et N. Katz},
  mrnumber = {354657},
}

@article{DeligneWeilII,
  author = {Deligne, Pierre},
  title = {La conjecture de {W}eil. {II}},
  journaltitle = {Institut des Hautes \'Etudes Scientifiques. Publications Math\'ematiques},
  number = {52},
  pages = {137--252},
  year = {1980},
  url = {http://www.numdam.org/item?id=PMIHES_1980__52__137_0},
  issn = {0073-8301,1618-1913},
  mrnumber = {601520},
}

@article{CollinoJinzenjiHypersfQH,
  author = {Collino, Alberto and Jinzenji, Masao},
  title = {On the structure of the small quantum cohomology rings of projective hypersurfaces},
  journaltitle = {Communications in Mathematical Physics},
  volume = {206},
  number = {1},
  pages = {157--183},
  year = {1999},
  doi = {10.1007/s002200050701},
  url = {https://doi.org/10.1007/s002200050701},
  issn = {0010-3616,1432-0916},
  mrnumber = {1736987},
}

@article{ChenExponentialType,
  author = {Chen, Zihong},
  title = {On the exponential type conjecture},
  journaltitle = {Journal of the American Mathematical Society},
  year = {2026},
  pubstate = {forthcoming},
  eprint = {2409.03922v2},
  eprinttype = {arxiv},
  primaryclass = {math.SG},
  url = {https://arxiv.org/abs/2409.03922v2},
}

@misc{DonagiPantevThreeQuadrics,
  author = {Donagi, Ron and Pantev, Tony},
  title = {The intersection of three quadrics in {$\mathbb P^7$} revisited},
  year = {2025},
  eprint = {2503.19039v1},
  eprinttype = {arxiv},
  url = {https://arxiv.org/abs/2503.19039v1},
}

@misc{BenedettiManivelPerrinGM,
  author = {Benedetti, Vladimiro and Manivel, Laurent and Perrin, Nicolas},
  title = {Quantum cohomology and irrationality of {Gushel--Mukai} fourfolds},
  year = {2026},
  eprint = {2603.17487v2},
  eprinttype = {arxiv},
  url = {https://arxiv.org/abs/2603.17487v2},
}

@misc{BenedettiEtAlAtomicCriterion,
  author = {Benedetti, Vladimiro and Fay, Aideen and Gu{\'e}r{\'e}, J{\'e}r{\'e}my and Manivel, Laurent and Perrin, Nicolas},
  title = {An atomic criterion for irrationality without quantum computations},
  year = {2026},
  eprint = {2607.26718v1},
  eprinttype = {arxiv},
  url = {https://arxiv.org/abs/2607.26718v1},
}

@misc{AlexeevEtAlFlagAtoms,
  author = {Alexeev, Boris and Cavenaghi, Leonardo F. and Galindo, Giovane and Georgiev, Bogdan and Katzarkov, Ludmil and Muniz Martins, Pedro Antonio},
  title = {On the atomic decomposition of complete intersection in flag varieties},
  year = {2026},
  eprint = {2609.21914v1},
  eprinttype = {arxiv},
  url = {https://arxiv.org/abs/2609.21914v1},
}

@misc{BenedettiEtAlVerra,
  author = {Benedetti, Vladimiro and Gu{\'e}r{\'e}, J{\'e}r{\'e}my and Manivel, Laurent and Perrin, Nicolas},
  title = {Quantum cohomology and birational geometry of {Verra} fourfolds},
  year = {2026},
  eprint = {2605.30450v1},
  eprinttype = {arxiv},
  url = {https://arxiv.org/abs/2605.30450v1},
}

@misc{CavenaghiKatzarkovKontsevichAtomsSymbols,
  author = {Cavenaghi, Leonardo F. and Katzarkov, Ludmil and Kontsevich, Maxim},
  title = {Atoms meet symbols},
  year = {2026},
  eprint = {2509.15831v4},
  eprinttype = {arxiv},
  url = {https://arxiv.org/abs/2509.15831v4},
}

@misc{FayEquivariantVerra,
  author = {Fay, Aideen},
  title = {Equivariant irrationality of very general symmetric {Verra} fourfolds},
  year = {2026},
  eprint = {2605.30439v1},
  eprinttype = {arxiv},
  url = {https://arxiv.org/abs/2605.30439v1},
}

@misc{FayHasseObstructions,
  author = {Fay, Aideen},
  title = {{Hasse} obstructions to rationality for special fourfolds},
  year = {2026},
  eprint = {2609.06759v1},
  eprinttype = {arxiv},
  url = {https://arxiv.org/abs/2609.06759v1},
}

@misc{BohningBothmerSuaNaiveAtoms,
  author = {B{\"o}hning, Christian and Graf von Bothmer, Hans-Christian and Su'a, Zac},
  title = {Naive atoms of blowups: examples},
  year = {2026},
  eprint = {2606.17884v2},
  eprinttype = {arxiv},
  url = {https://arxiv.org/abs/2606.17884v2},
}

@misc{GyengeProductQDM,
  author = {Gyenge, {\'A}d{\'a}m},
  title = {The quantum {$D$}-module of product varieties},
  year = {2025},
  eprint = {2509.07407v1},
  eprinttype = {arxiv},
  url = {https://arxiv.org/abs/2509.07407v1},
}

@article{TotaroHypersurfaces,
  author = {Totaro, Burt},
  title = {Hypersurfaces that are not stably rational},
  journaltitle = {Journal of the American Mathematical Society},
  volume = {29},
  number = {3},
  pages = {883--891},
  year = {2016},
  eprint = {1502.04040},
  eprinttype = {arxiv},
}

@article {DavidHertlingEulerFlow,
    AUTHOR = {David, Liana and Hertling, Claus},
     TITLE = {Regular {$F$}-manifolds: initial conditions and {F}robenius
              metrics},
   JOURNAL = {Ann. Sc. Norm. Super. Pisa Cl. Sci. (5)},
  FJOURNAL = {Annali della Scuola Normale Superiore di Pisa. Classe di
              Scienze. Serie V},
    VOLUME = {17},
      YEAR = {2017},
    NUMBER = {3},
     PAGES = {1121--1152},
      ISSN = {0391-173X,2036-2145},
   MRCLASS = {53D45 (32B10 32G99 53Z05)},
  MRNUMBER = {3726837},
}

@article{ChatzistamatiouLevineTorsion,
  author = {Chatzistamatiou, Andre and Levine, Marc},
  title = {Torsion orders of complete intersections},
  journaltitle = {Algebra \& Number Theory},
  volume = {11},
  number = {8},
  pages = {1779--1835},
  year = {2017},
  doi = {10.2140/ant.2017.11.1779},
  eprint = {1605.01913},
  eprinttype = {arxiv},
}

@article{NicaiseOttemTropical,
  author = {Nicaise, Johannes and Ottem, John Christian},
  title = {Tropical degenerations and stable rationality},
  journaltitle = {Duke Mathematical Journal},
  volume = {171},
  number = {15},
  pages = {3023--3075},
  year = {2022},
  doi = {10.1215/00127094-2022-0065},
  eprint = {1911.06138v4},
  eprinttype = {arxiv},
  url = {https://arxiv.org/abs/1911.06138v4},
}

@misc{SkauliQuadricCubic,
  author = {Skauli, Bj{\o}rn},
  title = {A stably irrational $(2,3)$-complete intersection fourfold over {$\mathbb Q$}},
  year = {2021},
  eprint = {2105.14846v1},
  eprinttype = {arxiv},
  url = {https://arxiv.org/abs/2105.14846v1},
}

@article{OkadaWeightedHypersurfaces,
  author = {Okada, Takuzo},
  title = {Smooth weighted hypersurfaces that are not stably rational},
  journaltitle = {Annales de l'Institut Fourier},
  volume = {71},
  number = {1},
  pages = {203--237},
  year = {2021},
  doi = {10.5802/aif.3390},
  eprint = {1709.07748v2},
  eprinttype = {arxiv},
  url = {https://arxiv.org/abs/1709.07748v2},
}

@article{HassettPirutkaTschinkelDoubleFourfold,
  author = {Hassett, Brendan and Pirutka, Alena and Tschinkel, Yuri},
  title = {A very general quartic double fourfold is not stably rational},
  journaltitle = {Algebraic Geometry},
  volume = {6},
  number = {1},
  pages = {64--75},
  year = {2019},
  doi = {10.14231/AG-2019-004},
  eprint = {1605.03220v1},
  eprinttype = {arxiv},
  url = {https://arxiv.org/abs/1605.03220v1},
}

@article{DeFernexRigidHypersurfaces,
  author = {de Fernex, Tommaso},
  title = {Birationally rigid hypersurfaces},
  journaltitle = {Inventiones Mathematicae},
  volume = {192},
  number = {3},
  pages = {533--566},
  year = {2013},
  eprint = {math/0604213},
  eprinttype = {arxiv},
}

@article{DeFernexRigidHypersurfacesErratum,
  author = {de Fernex, Tommaso},
  title = {Erratum to: Birationally rigid hypersurfaces},
  journaltitle = {Inventiones Mathematicae},
  volume = {203},
  number = {2},
  pages = {675--680},
  year = {2016},
  doi = {10.1007/s00222-015-0618-4},
  eprint = {1506.07086},
  eprinttype = {arxiv},
}

@incollection{TianGangQH,
  author = {Tian, Gang},
  title = {Quantum cohomology and its associativity},
  booktitle = {Current developments in mathematics, 1995 ({C}ambridge, {MA})},
  pages = {361--401},
  publisher = {International Press},
  location = {Cambridge, MA},
  year = {1996},
}

@misc{PomerleanoSeidel,
  author = {Daniel Pomerleano and Paul Seidel},
  title = {The quantum connection, {Fourier--Laplace} transform, and families of {$A_\infty$}-categories},
  year = {2026},
  eprint = {2308.13567v5},
  eprinttype = {arxiv},
  primaryclass = {math.SG},
  url = {https://arxiv.org/abs/2308.13567v5},
}

@article{HukuharaSingularII,
  author  = {Hukuhara, Masuo},
  title   = {Sur les points singuliers des {\'e}quations diff{\'e}rentielles lin{\'e}aires, {II}},
  journal = {Journal of the Faculty of Science, Hokkaido Imperial University. Series I},
  volume  = {5},
  pages   = {123--166},
  year    = {1937}
}

@article{TurrittinIrregular,
  author  = {Turrittin, H. L.},
  title   = {Convergent solutions of ordinary linear homogeneous differential equations
             in the neighborhood of an irregular singular point},
  journal = {Acta Mathematica},
  volume  = {93},
  pages   = {27--66},
  year    = {1955}
}

@article{LeveltJordan,
  author  = {Levelt, A. H. M.},
  title   = {Jordan decomposition for a class of singular differential operators},
  journal = {Arkiv f{\"o}r Matematik},
  volume  = {13},
  pages   = {1--27},
  year    = {1975},
  doi     = {10.1007/BF02386195}
}

@misc{CarazolliEtAlQuantumRationality,
  author        = {Carazolli, Luiz and
                   Cavenaghi, Leonardo F. and
                   Katzarkov, Ludmil and
                   Kontsevich, Maxim and
                   Muniz Martins, Pedro Antonio},
  title         = {A quantum connection approach to rationality},
  year          = {2026},
  eprint        = {2610.06669},
  archivePrefix = {arXiv},
  primaryClass  = {math.AG}
}

@misc{KuznetsovPerrySerre,
  author        = {Kuznetsov, Alexander and Perry, Alexander},
  title         = {Serre functors and dimensions of residual categories},
  year          = {2021},
  eprint        = {2109.02026},
  archivePrefix = {arXiv},
  primaryClass  = {math.AG}
}
\end{document}